\newcommand{\R}{\mathbbm{R}}
\newcommand{\N}{\mathbbm{N}}
\def\b#1{\boldsymbol{#1}}
\def\dx{\,\mathrm dx}
\def\div{\,\mathrm{div}\,}
\def\der{\mathrm{D}}
\def\epsilon{\varepsilon}
\theoremstyle{definition}
\newtheorem{defi}{Definition}
\newtheorem{bsp}{Example}
\theoremstyle{plain}
\newtheorem{lem}{Lemma}
\newtheorem{cor}{Corollary}
\newtheorem{thm}{Theorem}
\newtheorem{remark}{Remark}
\theoremstyle{remark}
\newcounter{AssCount}
\renewcommand{\theAssCount}{\textbf{(A{\arabic{AssCount}})}}
\newcounter{AssListCount}
\begin{document}

\title{Applying a phase field approach for shape optimization of a stationary Navier-Stokes flow}


\author{Harald Garcke\footnotemark[1]\and Claudia Hecht\footnotemark[1]}

\date{}

\maketitle

\renewcommand{\thefootnote}{\fnsymbol{footnote}}
%
\footnotetext[1]{Fakult\"at f\"ur Mathematik, Universit\"at Regensburg, 93040 Regensburg, Germany
({\tt \{Harald.Garcke, Claudia.Hecht\}@mathematik.uni-regensburg.de}).}
\renewcommand{\thefootnote}{\arabic{footnote}}

\begin{abstract}
\noindent We apply a phase field approach for a general shape optimization problem of a stationary Navier-Stokes flow. To be precise we add a multiple of the Ginzburg--Landau energy as a regularization to the objective functional and relax the non-permeability of the medium outside the fluid region. The resulting diffuse interface problem can be shown to be well-posed and optimality conditions are derived. We state suitable assumptions on the problem in order to derive a sharp interface limit for the minimizers and the optimality conditions. Additionally, we can derive a necessary optimality system for the sharp interface problem by geometric variations without stating additional regularity assumptions on the minimizing set.
\end{abstract}

\noindent \textbf{Key words. }Shape and topology optimization, phase field method, diffuse interfaces, stationary Navier-Stokes flow, fictitious domain.\\

\noindent \textbf{AMS subject classification. } 35R35, 35Q35, 49Q10, 49Q12, 49Q20, 76D05, 76N10.

\pagestyle{myheadings}
\markboth{H. GARCKE, C. HECHT}{SHAPE OPTIMIZATION OF NAVIER-STOKES FLOW}

\section{Introduction}
Shape optimization problems occur in many fields and industrial applications. Thus there have been a lot of contributions for this field in terms of different mathematical models, sensitivity analysis and in particular numerical methods. One main field is the structural optimization where one wants to find optimal material configurations. The second important field of shape optimization problems can be found in fluid mechanics, mainly because there are plenty of ideas, applications and contributions from industry. One typical example of such a problem is to optimize the shape of an obstacle inside a fluid in order to minimize the drag. Shape optimization problems are very challenging, in particular because the classical formulations are in general not well-posed, i.e. no minimizers exists, and it is difficult to find a stable, reliable numerical algorithm. One typical idea to overcome the first beforementioned problem is to restrict the class of possible solutions in terms of regularity or geometric constraints, see for instance \cite{bucur2006variational, kawohl2000optimal}. But this restricts the class of possible solutions and the numerical realization of those constraints is not obvious. In the field of structural optimization, another idea has been introduced, namely a regularization of the problem by adding a multiple of the perimeter of the obstacle to the objective functional. This mathematical remodelling reflects the industrial constraints of avoiding oscillations of the optimal shape on too fine scales. If one replaces the region outside the material by a so-called ersatz material, where the material properties are chosen very ``close'' to non-presence of material, the resulting problem can actually be shown to inherit a minimizer, see \cite{ambrosioButtazzo}. Bourdin and Chambolle were one of the first ones approximating the perimeter in this setting by the Ginzburg-Landau energy and hence restating this problem in a phase field setting, see \cite{bourdin_chambolle}. The resulting formulation can then be considered in standard frameworks and can also be used for numerics.\\
This idea has been applied to a fluid dynamical setting in \cite{HechtStokesEnergy, GarckeHechtStokes}, where in particular the idea of \cite{borrvall} was used in order to replace the non-fluid region by a porous medium. So far, such an idea has only been applied to the Stokes equations. But of course, in applications nonlinear fluid models and higher Reynolds numbers play an important role and so the aim of this work is to discuss the stationary state equations as state constraints. Several problems arise if we use those nonlinear equations, most of them due to the non-uniqueness of solutions to those equations. This yields that no classical control to state operator can be defined any more, and deriving optimality conditions becomes a difficult task. But also when considering the sharp interface limit, we have to identify limit elements of the fluid regions and hence need unique solvability.\\

In this work, we will discuss the following topics:
\begin{itemize}
\item In Section~\ref{s:PhasefieldProblem} we formulate a phase field porous medium formulation for shape optimization problems in a stationary Navier-Stokes flow. This will be in the following form:
\begin{align*}
& \min_{(\varphi,\b u)}\int_\Omega\frac12\alpha_\epsilon(\varphi)\left|\b u\right|^2\dx+\int_\Omega f\left(x,\b u,\der\b u\right)\dx+\gamma\int_\Omega\frac\epsilon2\left|\nabla\varphi\right|^2+\frac1\epsilon\psi(\varphi)\dx\\
&\text{subject to }\quad \alpha_\epsilon(\varphi)\b u-\mu\Delta\b u+\b u\cdot\nabla\b u=\b f,\,\,\div\b u=0\quad\text{ in }\Omega.
\end{align*}
We will in particular couple the phase field parameter $\epsilon>0$, describing the interfacial width, to the permeability of the medium outside the fluid region, given by $\left(\alpha_\epsilon(-1)\right)^{-1}\xrightarrow{\epsilon\searrow0}0$.
\item We discuss the phase field problem in terms of existence of a minimizer and necessary optimality conditions, see Section~\ref{s:PhasefieldExistence} and ~\ref{s:PhaseFieldOptCond}.
\item A corresponding perimeter penalized sharp interface problem, where the permeability of the medium outside the fluid region is zero, is formulated in Section \ref{s:SharpFormulation}. In this setting we only consider functions $\varphi$ with $\varphi\in\{-1,1\}$ a.e. and roughly outlined we solve
\begin{align*}
& \min_{(\varphi,\b u)}\int_\Omega f\left(x,\b u,\der\b u\right)\dx+\gamma c_0 P_\Omega(\{\varphi=1\})\\
&\text{subject to }\quad -\mu\Delta\b u+\b u\cdot\nabla\b u=\b f,\,\,\div\b u=0\quad\text{  in }\{\varphi=1\}.
\end{align*}

\item We derive necessary optimality conditions for the sharp interface problem under the weak regularity assumptions on the minimizing set, see Section~\ref{s:SharpOptCond}. If certain regularity of the boundary of the minimizing shape is assumed, one can restate those optimality conditions in the classical Hadamard form.
\item After formulating appropriate assumptions on the problem we can consider a sharp interface limit for the minimizers and also in the equations of the first variations, see Section \ref{s:SharpLimit}.

\end{itemize}
We want to point out that the resulting phase field problem including the porous medium approach inherits, in contrast to most formulations in shape optimization, a minimizer. Additionally, we allow a very large class of possible solutions. In particular, we do not prescribe any topological restrictions and thus we may refer to this problem also as shape and topology optimization.\\
The derived optimality conditions of the sharp interface problem generalize existing results from literature, as those can be stated with very weak regularity assumptions on the optimal shape. To calculate the geometric variation of the state variable, we actually only need the minimizing set to be Lebesgue measurable. But if appropriate regularity is assumed for the optimal shape, the stated optimality system can be shown to be equivalent to known results from literature.\\
The proposed phase field problem can also be considered in the framework of optimal control problems. This can then for instance be handled numerically by a gradient flow approach. The numerical reliability of this approach has already been examined in \cite{garckehinzeetal}.

\section{The phase field problem}\label{s:Phasefield}
\subsection{Problem formulation}\label{s:PhasefieldProblem}

In this section we will use the approach for shape optimization problems in fluids introduced in \cite{GarckeHechtStokes}, where the Stokes equations were used a a fluid model, and apply it to the stationary Navier-Stokes equations. In the following we will minimize a certain objective functional depending on the behaviour of some fluid by varying the shape, geometry and topology of the region wherein the fluid is located. The fluid region is to be chosen inside a fixed container $\Omega\subset\R^d$, which is assumed to fulfill

\begin{list}{\theAssCount}{\usecounter{AssCount}}
  \item\label{a:Omega} $\Omega\subseteq\R^d$, $d\in\{2,3\}$, is a bounded Lipschitz domain with
  outer unit normal $\b n$ such that $\R^d\setminus\overline\Omega$ is connected.
  \setcounter{AssListCount}{\value{AssCount}}
\end{list}
\begin{remark}
	The condition of $\R^d\setminus\overline\Omega$ being connected arises due to technical reasons, in particular when defining solenoidal extensions of the boundary data, see for instance Lemma~\ref{l:StatNSExistSolVF}. Anyhow, we could establish the same result for any bounded Lipschitz domain $\Omega\subset\R^d$ by using for instance a generalized version of Lemma~\ref{l:StatNSExistSolVF}, which can be found in \cite[Lemma IX.4.2]{galdi}, by including some additional conditions on the boundary data.
\end{remark}

The velocity of the fluid has prescribed Dirichlet boundary data on $\partial\Omega$, hence we may impose for instance certain in-or outflow profiles. Additionally we can assume a body force acting on the whole of $\Omega$. And so we fix for the subsequent considerations the following functions:

\begin{list}{\theAssCount}{\usecounter{AssCount}}\setcounter{AssCount}{\value{AssListCount} }
  \item\label{a:Forces}
  Let $\b f\in\b L^2(\Omega)$ denote the applied body force and
  $\b g\in\b H^{\frac12}\left(\partial\Omega\right)$ the given boundary function such that
  $\int_{\partial\Omega}\b g\cdot\b n\dx=0$.
  \setcounter{AssListCount}{\value{AssCount}}
\end{list}
We remark, that throughout this work $\R^d$-valued functions or function spaces of $\R^d$-valued functions are denoted by boldface letters.\\

The general functional to be minimized is given as $\int_\Omega f\left(x,\b u,\der\b u\right)\dx$ and hence depends on the velocity $\b u\in\b U:=\{\b v\in\b H^1(\Omega)\mid\div\b v=0,\b v|_{\partial\Omega}=\b g\}$ of the fluid and its derivative. The treatment of the pressure in the objective functional is described briefly in Section~\ref{s:Concluding}. The objective functional is chosen according to the following assumptions:

\begin{list}{\theAssCount}{\usecounter{AssCount}}\setcounter{AssCount}{\value{AssListCount}
}
 	\item\label{a:ObjectiveFctl} We choose $f:\Omega\times\R^d\times\R^{d\times d}\to\R$ as a Carath\'eodory function, thus fulfilling
 	\begin{itemize}
 	\item $f(\cdot,v,A):\Omega\to\R$ is measurable for each $v\in\R^d$, $A\in\R^{d\times d}$, and
 	\item $f(x,\cdot,\cdot):\R^d\times\R^{d\times d}\to\R$ is continuous for almost every $x\in\Omega$.
 	\end{itemize}
	Let $p\geq 2$ for $d=2$ and $2\leq p\leq 6$ for $d=3$ and assume that there exist $a\in L^1(\Omega)$, $b_1,b_2\in L^\infty(\Omega)$ such that for almost every $x\in\Omega$ it holds
 	\begin{align}\label{e:ObjFctLCaratheodory}
 	\left|f\left(x,v,A\right)\right|\leq a(x)+b_1(x)|v|^p+b_2(x)\left|A\right|^2,\quad\forall v\in\R^d, A\in\R^{d\times d}.\end{align}
 	Additionally, assume that the functional

 	\begin{align}\label{e:NotAssDefF}F:\b H^1(\Omega)\to\R,\quad F\left(\b u\right):=\int_\Omega f\left(x,\b u(x),\der\b u(x)\right)\dx\end{align}
 	is weakly lower semicontinuous and $F|_{\b U}$ is bounded from below.
\setcounter{AssListCount}{\value{AssCount}}
\end{list}

\begin{remark}
	Remark that condition $\eqref{e:ObjFctLCaratheodory}$ implies that $\b H^1(\Omega)\ni\b u\mapsto\int_\Omega f\left(x,\b u,\der\b u(x)\right)\dx$ is continuous, see \cite{showalter}.
	\end{remark}
	
The shape to be optimized is here the region filled with fluid and is described by a design function $\varphi\in L^1(\Omega)$. The fluid region then corresponds to $\{x\in\Omega\mid\varphi(x)=1\}$ and the non-fluid region is described by $\{x\in\Omega\mid\varphi(x)=-1\}$. We will formulate a diffuse interface problem, hence $\varphi$ is also allowed to take values in $(-1,1)$, which yields then an interfacial region. The thickness of the interface is dependent on the so-called phase field parameter $\epsilon>0$. We impose an additional volume constraint for the fluid region, i.e. $\int_\Omega\varphi\dx\leq\beta\left|\Omega\right|$, where $\beta\in(-1,1)$ is an arbitrary but fixed constant. Hence, the design space for the optimization problem is given by
\begin{align}\Phi_{ad}:=\left\{\varphi\in H^1(\Omega)\mid\left|\varphi\right|\leq1\text{ a.e. in }\Omega, \int_\Omega\varphi\dx\leq\beta\left|\Omega\right|\right\}.\end{align}
Sometimes, we will use the enlarged set of admissible control functions $\overline{\Phi}_{ad}$, which is given by
\begin{align}\overline{\Phi}_{ad}:=\left\{\varphi\in H^1(\Omega)\mid\left|\varphi\right|\leq1\text{ a.e. in }\Omega\right\}.\end{align}

In order to obtain a well-posed problem, we use the idea of perimeter penalization. Thus we add a multiple of the diffuse interface analogue of the perimeter functional, which is the Ginzburg-Landau energy, to the objective functional. To be precise we add
$$\gamma\int_\Omega\frac\epsilon2\left|\nabla\varphi\right|^2+\frac1\epsilon\psi\left(\varphi\right)\dx$$
where $\psi:\R\to\overline\R:=\R\cup\{+\infty\}$, given by
\begin{align*}
  \psi(\varphi):=
  \begin{cases}
  \frac12\left(1-\varphi^2\right), & \text{if }\left|\varphi\right|\leq1,\\
    +\infty,&\text{otherwise,}
  \end{cases}
\end{align*}
is the potential and $\gamma>0$ a fixed weighting parameter for this regularization. The region outside the fluid obeys the equations of flow through porous material with small permeability $\left(\overline\alpha_\epsilon\right)^{-1}\ll1$. Notice that we couple the parameter for the porous medium approach to the phase field parameter $\epsilon>0$. In the interfacial region we interpolate between the stationary Navier-Stokes equations and the porous medium equations by using an interpolation function $\alpha_\epsilon:[-1,1]\to[0,\overline\alpha_\epsilon]$ fulfilling the following assumptions:

\begin{list}{\theAssCount}{\usecounter{AssCount}}\setcounter{AssCount}{\value{AssListCount}}
  \item \label{a:Alpha}
  Let $\alpha_\epsilon:\left[-1,1\right]\to\left[0,\overline\alpha_\epsilon\right]$
  be decreasing, surjective and continuous for every $\epsilon>0$.
  
  It is required that $\overline\alpha_\epsilon>0$ is chosen such that
  $\lim_{\epsilon\searrow0}\overline\alpha_\epsilon=+\infty$ and $\alpha_\epsilon$
  converges pointwise to some function $\alpha_0:[-1,1]\to[0,+\infty]$. Additionally, we impose $\alpha_\delta(x)\geq\alpha_\epsilon(x)$
  if $\delta\leq\epsilon$ for all $x\in\left[-1,1\right]$, $\lim_{\epsilon\searrow0}\alpha_\epsilon(0)<\infty$ and a growth condition
  of the form $\overline\alpha_\epsilon=\hbox{o}\left(\epsilon^{-\frac23}\right)$.%
  
  \setcounter{AssListCount}{\value{AssCount}}
\end{list}

\begin{remark}\label{r:ConvergenceRateTwoDim}
For space dimension $d=2$ we can even choose
$\overline\alpha_\epsilon=\hbox{o}\left(\epsilon^{-\kappa}\right)$
for any $\kappa\in(0,1)$, see \cite{GarckeHechtStokes}.
\end{remark}

We introduce some notation for the nonlinear convective term arising in the stationary Navier-Stokes equations. We denote by
$$b:\b H^1(\Omega)\times\b H^1(\Omega)\times\b H^1(\Omega)\to\R$$
the following trilinear form
$$b\left(\b u,\b v,\b w\right):=\sum_{i,j=1}^d\int_\Omega u_i\partial_iv_jw_j\dx=\int_\Omega\b u\cdot\nabla\b v\cdot\b w\dx.$$
Using the restriction on the space dimension $d\in\{2,3\}$, the imbedding theorems and classical results, we see that this trilinear form fulfills the following properties:

\begin{lem}\label{l:PropertiesTrilinearForm}
 The form $b$ is well-defined and continuous in the space
$\b H^1\left(\Omega\right)\times\b H^1\left(\Omega\right)\times\b H^1_0\left(\Omega\right).$ Moreover we have:
 \begin{align}\label{e:ContinuityEstimateTrilinearForm}
 	\left|b\left(\b u,\b v,\b w\right)\right|\leq K_\Omega\left\|\nabla\b u\right\|_{\b L^2\left(\Omega\right)}\left\|\nabla\b v\right\|_{\b L^2\left(\Omega\right)}\left\|\nabla\b w\right\|_{\b L^2(\Omega)}\quad\forall\b u, \b w\in\b H^1_0(\Omega),\b v\in\b H^1(\Omega)
 \end{align}
 with $K_\Omega=\frac{2\sqrt2\left|\Omega\right|^{1/6}}{3}$ if $d=3$ and $K_\Omega=\frac{\left|\Omega\right|^{1/2}}{2}$ if $d=2$. Additionally, the following properties are satisfied:
 \begin{align}\label{e:TrilinearformLastTwoEqualZero}
 	b\left(\b u,\b v,\b v\right)=0\qquad\forall\b u\in\b H^1(\Omega),\div\b u=0,\,\b v\in\b H^1_0(\Omega),
 \end{align}
 \begin{align}\label{e:TrilinearformLastTwoSwitch}
 	b\left(\b u,\b v,\b w\right)=-b\left(\b u,\b w,\b v\right)\qquad\forall\b u\in\b H^1(\Omega),\div\b u=0,\, \b v,\b w\in\b H^1_0(\Omega).
 \end{align}

\end{lem}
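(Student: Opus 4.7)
The plan is to break the lemma into its three ingredients and handle them in the natural order: continuity (with the stated constants), the vanishing identity \eqref{e:TrilinearformLastTwoEqualZero}, and the antisymmetry \eqref{e:TrilinearformLastTwoSwitch}, which will follow algebraically from the preceding one.

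For the continuity estimate \eqref{e:ContinuityEstimateTrilinearForm}, I would first apply H\"older's inequality with exponents $(4,2,4)$ to the integral defining $b$, giving
\[ \left|b(\b u,\b v,\b w)\right|\leq \|\b u\|_{\b L^4(\Omega)}\|\nabla\b v\|_{\b L^2(\Omega)}\|\b w\|_{\b L^4(\Omega)}, \]
which is legitimate in both $d=2$ and $d=3$ by the Sobolev embedding $\b H^1(\Omega)\hookrightarrow \b L^4(\Omega)$. Since $\b u,\b w\in\b H^1_0(\Omega)$, I would then use the Ladyzhenskaya/Gagliardo--Nirenberg interpolation inequality $\|\b w\|_{\b L^4}\leq c_d\,\|\b w\|_{\b L^2}^{1-d/4}\|\nabla \b w\|_{\b L^2}^{d/4}$ combined with Poincar\'e's inequality to replace the remaining $\b L^2$-norm by $\|\nabla \b w\|_{\b L^2}$, picking up the $|\Omega|$-power that appears in $K_\Omega$. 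The specific constants $\tfrac{|\Omega|^{1/2}}{2}$ and $\tfrac{2\sqrt{2}|\Omega|^{1/6}}{3}$ are textbook values (cf.\ Galdi), so this step is essentially bookkeeping rather than a new computation.

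For \eqref{e:TrilinearformLastTwoEqualZero}, I would first take $\b v\in\b C_0^\infty(\Omega)$ and observe that for divergence-free $\b u\in\b H^1(\Omega)$
\[ 2\,b(\b u,\b v,\b v)=\int_\Omega u_i\,\partial_i(v_jv_j)\dx=-\int_\Omega (\div\b u)\,|\b v|^2\dx=0, \]
the integration by parts being justified by the compact support of $\b v$; density of $\b C_0^\infty(\Omega)$ in $\b H^1_0(\Omega)$ together with the continuity proven in the first step extends the identity to all admissible $\b v$. The antisymmetry \eqref{e:TrilinearformLastTwoSwitch} then follows from bilinearity in the last two slots: expanding $b(\b u,\b v+\b w,\b v+\b w)$ and applying \eqref{e:TrilinearformLastTwoEqualZero} with arguments $\b v$, $\b w$ and $\b v+\b w$ yields $b(\b u,\b v,\b w)+b(\b u,\b w,\b v)=0$. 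The only mildly delicate point is pinning down the sharp constants in $K_\Omega$, but since these are classical and the remaining arguments are standard Sobolev/density manipulations, I do not expect any genuine obstacle.
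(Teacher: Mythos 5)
Your argument is correct, and it is essentially the argument that the paper delegates to the literature: the paper's proof of this lemma consists of the two citations \cite[Lemma IX.1.1]{galdi} for the continuity estimate $\eqref{e:ContinuityEstimateTrilinearForm}$ and \cite[Lemma IX.2.1]{galdi} for $\eqref{e:TrilinearformLastTwoEqualZero}$--$\eqref{e:TrilinearformLastTwoSwitch}$, and what you write out (H\"older with exponents $(4,2,4)$, interpolation/Sobolev bounds for the $\b L^4$-norms, integration by parts for smooth compactly supported $\b v$ followed by density, and polarization for the antisymmetry) is precisely the standard proof behind those references. Two small remarks on the details you gloss over. First, the volume factor in $K_\Omega$ does not come from Poincar\'e's inequality (whose constant is not controlled by a power of $\left|\Omega\right|$ alone without a Faber--Krahn type argument); the clean bookkeeping is the bounded-domain H\"older step $\left\|\b w\right\|_{\b L^2(\Omega)}\leq\left|\Omega\right|^{1/4}\left\|\b w\right\|_{\b L^4(\Omega)}$ (resp.\ $\left|\Omega\right|^{1/3}\left\|\b w\right\|_{\b L^6(\Omega)}$ for $d=3$), combined with the Gagliardo--Nirenberg inequalities with the explicit constants of \cite[Lemma II.3.2]{galdi}; with generic Ladyzhenskaya constants you obtain $\eqref{e:ContinuityEstimateTrilinearForm}$ with a larger $K_\Omega$, which is harmless for the rest of the paper but would not literally give $\frac{\left|\Omega\right|^{1/2}}{2}$ and $\frac{2\sqrt2\left|\Omega\right|^{1/6}}{3}$ -- since you, like the paper, defer the exact constants to Galdi, this is cosmetic. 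Second, in the density step for $\eqref{e:TrilinearformLastTwoEqualZero}$ note that you may not invoke $\eqref{e:ContinuityEstimateTrilinearForm}$ itself, since there the first argument must lie in $\b H^1_0(\Omega)$ while here $\b u\in\b H^1(\Omega)$ is only divergence free; what you need (and what your first H\"older display already provides) is the bound $\left|b\left(\b u,\b v,\b w\right)\right|\leq\left\|\b u\right\|_{\b L^4(\Omega)}\left\|\nabla\b v\right\|_{\b L^2(\Omega)}\left\|\b w\right\|_{\b L^4(\Omega)}$, valid for all three arguments in $\b H^1(\Omega)$ by the embedding $\b H^1(\Omega)\hookrightarrow\b L^4(\Omega)$ for $d\in\{2,3\}$, which also settles the well-definedness and continuity claim on $\b H^1\left(\Omega\right)\times\b H^1\left(\Omega\right)\times\b H^1_0\left(\Omega\right)$.
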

\begin{proof}
	The stated continuity and estimate $\eqref{e:ContinuityEstimateTrilinearForm}$ can be found in \cite[Lemma IX.1.1]{galdi} and $\eqref{e:TrilinearformLastTwoEqualZero}-\eqref{e:TrilinearformLastTwoSwitch}$ are considered in \cite[Lemma IX.2.1]{galdi}.
\end{proof}

Besides, we have the following important continuity property:
\begin{lem}\label{l:TrilinearFormStrongCont}
Let $\left(\b u_n\right)_{n\in\N},\left(\b v_n\right)_{n\in\N}, \b u,\b v\in \b H^1\left(\Omega\right)$ be such that
$$\b u_n\rightharpoonup\b u,\quad\b v_n\rightharpoonup\b v\qquad\text{in }\b H^1(\Omega)$$
where $\b v_n|_{\partial\Omega}=\b v|_{\partial\Omega}$ for all $n\in\N$.\\
	Then
	$$\lim_{n\to\infty}b\left(\b u_n,\b v_n,\b w\right)=b\left(\b u,\b v,\b w\right)\quad\forall\b w\in\b H^1(\Omega).$$
	Moreover, one can show that
	\begin{align}\label{e:StatNSStrongCong}\b H^1(\Omega)\times\b H^1(\Omega)\ni\left(\b u,\b v\right)\mapsto b\left(\b u,\cdot,\b v\right)\in\b H^{-1}(\Omega)\end{align}
	is strongly continuous.
	

\end{lem}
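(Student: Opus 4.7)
The plan is to reduce both assertions to strong $\b L^4$ convergence provided by the Rellich--Kondrachov compact embedding. Since $d\in\{2,3\}$, the embedding $\b H^1(\Omega)\hookrightarrow \b L^4(\Omega)$ is compact, so from $\b u_n\rightharpoonup\b u$ and $\b v_n\rightharpoonup\b v$ in $\b H^1(\Omega)$ one obtains $\b u_n\to\b u$ and $\b v_n\to\b v$ strongly in $\b L^4(\Omega)$. By weak convergence the sequences are also uniformly bounded in $\b H^1(\Omega)$, and $\nabla\b v_n\rightharpoonup\nabla\b v$ weakly in $\b L^2(\Omega)$.

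For the first statement I would use the linearity decomposition
$$b(\b u_n,\b v_n,\b w)-b(\b u,\b v,\b w) = b(\b u_n-\b u,\b v_n,\b w) + b(\b u,\b v_n-\b v,\b w).$$
Hölder with exponents $(4,2,4)$ bounds the first term by $\|\b u_n-\b u\|_{\b L^4}\,\|\nabla\b v_n\|_{\b L^2}\,\|\b w\|_{\b L^4}$, which tends to zero since only the first factor vanishes and the other two remain bounded (note $\b w\in\b H^1(\Omega)\hookrightarrow \b L^4(\Omega)$). For the second term, the pointwise product $u_i w_j$ lies in $L^2(\Omega)$ because both factors belong to $L^4(\Omega)$, so pairing the weakly-null sequence $\nabla(\b v_n-\b v)\rightharpoonup 0$ in $\b L^2(\Omega)$ against it yields $b(\b u,\b v_n-\b v,\b w)\to 0$.

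For the strong continuity assertion I would rewrite the trilinear form as a contraction against the tensor product $b(\b a,\b z,\b b) = \int_\Omega(\b a\otimes\b b):\nabla\b z\dx$ and apply Cauchy--Schwarz to obtain, for every $\b z\in\b H^1_0(\Omega)$,
$$|b(\b u_n,\b z,\b v_n)-b(\b u,\b z,\b v)| \le \|\b u_n\otimes\b v_n-\b u\otimes\b v\|_{\b L^2}\,\|\nabla\b z\|_{\b L^2}.$$
Expanding $\b u_n\otimes\b v_n-\b u\otimes\b v = (\b u_n-\b u)\otimes\b v_n + \b u\otimes(\b v_n-\b v)$ and applying Hölder with exponents $(4,4)$, each summand converges to zero in $\b L^2(\Omega)$ because one factor tends to zero strongly in $\b L^4(\Omega)$ while the other stays bounded there. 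Taking the supremum over $\b z$ with $\|\nabla\b z\|_{\b L^2}\le 1$ produces strong convergence of $b(\b u_n,\cdot,\b v_n)$ to $b(\b u,\cdot,\b v)$ in $\b H^{-1}(\Omega)$, which is the claimed strong continuity from the weak topology on $\b H^1(\Omega)\times\b H^1(\Omega)$ to the norm topology on $\b H^{-1}(\Omega)$.

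The argument is essentially routine and I do not expect a real obstacle beyond correctly invoking the Rellich--Kondrachov theorem in both space dimensions. The only subtlety worth noting is that in the first part the test function $\b w$ is only assumed to lie in $\b H^1(\Omega)$, not in $\b H^1_0(\Omega)$, so one cannot integrate by parts to move the derivative off $\b v_n-\b v$; the direct Hölder bound above avoids this issue. The boundary condition $\b v_n|_{\partial\Omega}=\b v|_{\partial\Omega}$ is not actually needed in my reasoning, but it would become relevant in alternative arguments exploiting $\b v_n-\b v\in\b H^1_0(\Omega)$.
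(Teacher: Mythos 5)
Your proof is correct and rests on the same mechanism as the paper's, which simply invokes Zeidler's Lemma 72.5 together with the compact imbedding $\b H^1(\Omega)\hookrightarrow\b L^3(\Omega)$ and the continuous imbedding $\b H^1(\Omega)\hookrightarrow\b L^6(\Omega)$: in both cases weak $\b H^1$ convergence is upgraded to strong $L^p$ convergence by Rellich--Kondrachov and combined with H\"older, your exponent split $(4,2,4)$ being interchangeable with the paper's $(3,2,6)$ for $d\in\{2,3\}$. The only substantive difference is your treatment of the middle-slot term $b(\b u,\b v_n-\b v,\b w)$: rather than exploiting $\b v_n-\b v\in\b H^1_0(\Omega)$ (the point at which the trace hypothesis $\b v_n|_{\partial\Omega}=\b v|_{\partial\Omega}$ enters arguments patterned on Zeidler's lemma), you pair the weakly null gradients $\nabla(\b v_n-\b v)$ in $\b L^2(\Omega)$ against the fixed $L^2$ function with components $u_iw_j$, which is legitimate and, as you observe, shows that the trace hypothesis is not actually needed for either assertion. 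Your second part is also sound: the identity $b(\b a,\b z,\b b)=\int_\Omega\left(\b a\otimes\b b\right):\nabla\b z\dx$ involves no integration by parts, the estimate $\left\|\b u_n\otimes\b v_n-\b u\otimes\b v\right\|_{\b L^2(\Omega)}\le\left\|\b u_n-\b u\right\|_{\b L^4(\Omega)}\left\|\b v_n\right\|_{\b L^4(\Omega)}+\left\|\b u\right\|_{\b L^4(\Omega)}\left\|\b v_n-\b v\right\|_{\b L^4(\Omega)}$ tends to zero, and taking the supremum over $\left\|\nabla\b z\right\|_{\b L^2(\Omega)}\le1$ yields convergence in the $\b H^{-1}(\Omega)$ norm, which is exactly the asserted strong (weak-to-norm sequential) continuity.
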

\begin{proof}
	We apply the idea of \cite[Lemma 72.5]{zeidler4} and make in particular use of the compact imbedding $\b H^1(\Omega)\hookrightarrow\b L^3(\Omega)$ and the continuous imbedding $\b H^1(\Omega)\hookrightarrow\b L^6(\Omega)$.	The strong continuity stated in $\eqref{e:StatNSStrongCong}$ follows from \cite[Lemma 72.5]{zeidler4}.
	
\end{proof}

We continue with a technical lemma that will be needed quite often and is taken from \cite[Lemma IX.4.2]{galdi}.
\begin{lem}\label{l:StatNSExistSolVF}
	Let $U$ be a bounded Lipschitz domain in $\R^d$ such that $\R^d\setminus\overline U$ is connected and let $\b v_\ast\in\b H^{\frac12}\left(\partial U\right)$ satisfy
	$$\int_{\partial U}\b v_\ast\cdot\b n\dx=0$$
	where $\b n$ denotes here the outer unit normal on $U$.\\
	Then for any $\eta>0$ there exists some $\delta=\delta\left(\eta,\b v_\ast,\b n,U\right)>0$ and a vector field $\b V=\b V(\delta)$ such that
	$$\b V\in\b H^1\left(U\right),\qquad\div\b V=0,\qquad\b V=\b v_\ast\text{ on }\partial U$$
	and verifying
	\begin{align}\label{e:ExplicitSolenoidalVectorFieldTrilinearEstimate}\left|\int_U\b u\cdot\nabla\b V\cdot\b u\dx\right|\leq\eta\left\|\nabla\b u\right\|_{\b L^2(U)}^2\quad\forall\b u\in\b H^1_0(U).\end{align}
\end{lem}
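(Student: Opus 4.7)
The plan is to follow the classical Leray--Hopf construction, which proceeds in two stages: first obtain any divergence-free $\b H^1$-extension of $\b v_\ast$, then localize it to an arbitrarily thin boundary layer so that Hardy's inequality converts the support shrinkage into the desired smallness of the trilinear term.

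First I would produce an initial solenoidal extension $\b W \in \b H^1(U)$ with $\b W|_{\partial U} = \b v_\ast$. The flux condition $\int_{\partial U}\b v_\ast \cdot \b n \ds = 0$ is precisely the compatibility condition needed: one may either invoke the Bogovskii operator applied to a lifting of the boundary datum, or, exploiting that $\R^d\setminus\overline U$ is connected (so that all first Betti-number obstructions vanish), represent $\b W = \mathrm{curl}\,\b A$ for some vector potential $\b A \in \b H^2(U)$ (a stream function in dimension two). This step is completely standard and uses assumption \eqref{a:Omega} only in the second variant.

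Next I would introduce a Hopf-type cut-off $\xi_\delta\in C^\infty(\overline U)$ depending on the distance $d(x) := \mathrm{dist}(x,\partial U)$. The key property is that for each $\eta>0$ one can build $\xi_\delta$ with $\xi_\delta \equiv 1$ in a neighborhood of $\partial U$, supported in a thin layer $\{d(x) \leq \kappa(\delta)\}$ shrinking as $\delta\to 0$, and satisfying the logarithmic gradient bound
\begin{equation*}
|\nabla \xi_\delta(x)| \leq \frac{\eta}{d(x)} \qquad \text{for a.e. } x\in U.
\end{equation*}
Such a cut-off is constructed by composing a smooth profile with $\log d(x)/\log \delta$; the Lipschitz regularity of $\partial U$ ensures that $d$ is Lipschitz and comparable to a smooth distance. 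I then set $\b V := \mathrm{curl}(\xi_\delta \b A)$ (or the analogous expression via the Bogovskii route, using $\b V := \xi_\delta \b W - \mathcal{B}((\nabla \xi_\delta)\cdot \b W)$ so as to kill the divergence). By construction $\b V \in \b H^1(U)$, $\div\b V = 0$, and because $\xi_\delta \equiv 1$ on $\partial U$ we still have $\b V|_{\partial U} = \b v_\ast$.

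Finally I would establish \eqref{e:ExplicitSolenoidalVectorFieldTrilinearEstimate}. Writing $\nabla \b V$ as a sum of a term involving $\nabla \xi_\delta$ and a term involving $\xi_\delta$ (the latter supported in the shrinking boundary strip), the pointwise bound above yields
\begin{equation*}
|\nabla \b V(x)| \leq \frac{C\eta}{d(x)} + C\,\mathbbm{1}_{\{d(x)\leq \kappa(\delta)\}}\,|\nabla \b A(x)| + \text{l.o.t.}
\end{equation*}
For $\b u \in \b H^1_0(U)$, Hardy's inequality $\int_U |\b u|^2/d(x)^2 \dx \leq C_U \|\nabla \b u\|_{\b L^2}^2$ (valid in any bounded Lipschitz domain) bounds the first contribution by $C\eta\|\nabla\b u\|_{\b L^2}^2$, while dominated convergence in the boundary strip (combined with the same Hardy inequality to absorb the remaining factor of $d(x)$) handles the residual terms, which can be made smaller than $\eta$ by choosing $\delta$ sufficiently small. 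Summing and relabelling $\eta$ yields the claim.

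The main obstacle is the Hopf cut-off: one must match the logarithmic decay rate of $|\nabla \xi_\delta|$ precisely to what Hardy's inequality absorbs, and simultaneously keep the support of $\b V$ thin enough that the error terms coming from $\b A$ (or from the Bogovskii correction) vanish in the limit. Everything else is soft functional analysis, and the final statement is in fact exactly \cite[Lemma IX.4.2]{galdi}, whose proof the above outline follows.
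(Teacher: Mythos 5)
Your main line of argument is exactly right, and it is worth noting that the paper itself gives no proof of this lemma: it is quoted verbatim from \cite[Lemma IX.4.2]{galdi}, whose proof is precisely the classical Leray--Hopf construction you outline (solenoidal extension written as a curl of a potential, Hopf cut-off with the logarithmic gradient bound $|\nabla\xi_\delta|\leq\eta/d(x)$, Hardy's inequality for $\b u\in\b H^1_0(U)$, and a shrinking boundary strip to kill the remainder). Two cautions on the side remarks. First, the parenthetical Bogovskii variant $\b V:=\xi_\delta\b W-\mathcal B\bigl((\nabla\xi_\delta)\cdot\b W\bigr)$ does not deliver the smallness as stated: the correction is controlled only through $\left\|(\nabla\xi_\delta)\cdot\b W\right\|_{L^2(U)}$, and since $\b W$ has non-vanishing trace on $\partial U$, Hardy's inequality is not available to absorb the factor $1/d(x)$, so this norm does not tend to zero as the layer shrinks (for the logarithmic cut-off it in fact blows up); only the curl form $\b V=\mathrm{curl}(\xi_\delta\b A)$ works directly. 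Second, the topological obstruction to writing the extension as a curl is not the first Betti number but the vanishing of the flux of $\b v_\ast$ through \emph{each} connected component of $\partial U$; the hypothesis that $\R^d\setminus\overline U$ is connected forces $\partial U$ to be connected, so the single total-flux condition suffices --- this is exactly the point of Remark 1 in the paper, which says that for a general Lipschitz domain one must impose additional conditions on the boundary data.
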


We now formulate the overall optimization problem. This is given as

\begin{equation}\begin{split}\label{e:StatNSGenMinProblemFctl}\min_{\left(\varphi,\b u\right)}J_\epsilon\left(\varphi,\b u\right)&:=\frac12\int_\Omega\alpha_\epsilon\left(\varphi\right)\left|\b u\right|^2\dx+\int_\Omega f\left(x,\b u,\der\b u\right)\dx+\gamma\int_\Omega\frac\epsilon2\left|\nabla\varphi\right|^2+\frac1\epsilon\psi\left(\varphi\right)\dx\end{split}\end{equation}
subject to $\left(\varphi,\b u\right)\in \Phi_{ad}\times\b U$ and
\begin{align}\label{e:StatNSGenConstraintsWeak}\int_\Omega\alpha_\epsilon\left(\varphi\right)\b u\cdot\b v\dx+\mu\int_\Omega\nabla\b u\cdot\nabla\b v\dx+b\left(\b u,\b u,\b v\right)=\int_\Omega\b f\cdot\b v\dx\quad\forall\b v\in\b V\end{align}
where $\b V:=\{\b v\in\b H^1_0(\Omega)\mid\div\b v=0\}$. The first term which includes the interpolation function $\alpha_\epsilon$ appearing in the objective functional $\eqref{e:StatNSGenMinProblemFctl}$ penalizes too large values for $|\b u|$ outside the fluid region (hence if $\varphi=-1$). This is a result of the choice of $\alpha_\epsilon(-1)=\overline\alpha_\epsilon\gg1$. The penalization of too large values for the velocity in the porous medium is in particular important because we want in the limit $\epsilon\searrow0$ the velocity $\b u$ to vanish outside the fluid region, see Section~\ref{s:SharpFormulation}. By this we ensure to arrive in the desired black-and-white solutions.

\subsection{Existence results for the phase field problem}\label{s:PhasefieldExistence}

We will be concerned in the following with well-posedness of the constraints $\eqref{e:StatNSGenConstraintsWeak}$ and define a solution operator called $\b S_\epsilon$, see Lemma~\ref{l:StatNSExistSolOperatorSEpsilon}. Since in general we might not have a \emph{unique} solution for an arbitrary $\varphi\in\overline \Phi_{ad}$, the solution operator may be set valued, and so we cannot reformulate the problem into minimizing a reduced objective functional as it was possible in \cite{GarckeHechtStokes}.\\
Afterwards, we show existence of minimizers for the optimal control problem $\eqref{e:StatNSGenMinProblemFctl}-\eqref{e:StatNSGenConstraintsWeak}$.

\begin{lem}\label{l:StatNSExistSolOperatorSEpsilon}
	For every $\varphi\in L^1(\Omega)$ with $\left|\varphi(x)\right|\leq1$ a.e. in $\Omega$ there exists at least one $\b u\in\b U$ fulfilling $\eqref{e:StatNSGenConstraintsWeak}$.\\	
	This defines a set-valued solution operator for the constraints, which will be denoted by
	$$\b S_\epsilon\left(\varphi\right):=\left\{\b u\in\b U\mid\b u\text{ solves } \eqref{e:StatNSGenConstraintsWeak}\right\}\qquad\forall\varphi\in\overline \Phi_{ad}.$$
\end{lem}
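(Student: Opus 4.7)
The plan is to reduce to an abstract existence problem on the space $\b V$ by lifting the inhomogeneous Dirichlet data, and then to solve the resulting nonlinear problem by a Galerkin scheme. The added Darcy-type drag term $\alpha_\epsilon(\varphi)\b u$ only contributes a non-negative, weakly continuous perturbation, so the argument closely follows the classical existence theory for the stationary Navier--Stokes equations.

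First, I would invoke Lemma~\ref{l:StatNSExistSolVF} (with $U=\Omega$, $\b v_\ast=\b g$) to pick, for some $\eta\in(0,\mu)$ fixed, a divergence-free extension $\b V_\eta\in\b H^1(\Omega)$ of $\b g$ satisfying $|b(\b z,\b V_\eta,\b z)|\le\eta\|\nabla\b z\|_{\b L^2(\Omega)}^2$ for all $\b z\in\b H^1_0(\Omega)$. Writing $\b u=\b w+\b V_\eta$ with $\b w\in\b V$ reduces \eqref{e:StatNSGenConstraintsWeak} to finding $\b w\in\b V$ with
\begin{equation*}
\int_\Omega\alpha_\epsilon(\varphi)\b w\cdot\b v\dx+\mu\int_\Omega\nabla\b w\cdot\nabla\b v\dx+b(\b w,\b w,\b v)+b(\b w,\b V_\eta,\b v)+b(\b V_\eta,\b w,\b v)=\langle\b F,\b v\rangle
\end{equation*}
for every $\b v\in\b V$, where $\b F\in\b V^\ast$ collects the constant contributions of $\b V_\eta$ and $\b f$.

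Second, I would derive the key a~priori estimate by testing with $\b v=\b w$. Properties \eqref{e:TrilinearformLastTwoEqualZero}--\eqref{e:TrilinearformLastTwoSwitch} annihilate $b(\b w,\b w,\b w)$ and $b(\b V_\eta,\b w,\b w)$, while Galdi's estimate gives $|b(\b w,\b V_\eta,\b w)|\le\eta\|\nabla\b w\|_{\b L^2(\Omega)}^2$. Together with $\alpha_\epsilon(\varphi)\ge0$ and Poincar\'e's inequality this yields
\begin{equation*}
(\mu-\eta)\|\nabla\b w\|_{\b L^2(\Omega)}^2\le\|\b F\|_{\b V^\ast}\|\b w\|_{\b V},
\end{equation*}
hence an a~priori bound independent of any approximation.

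Third, I would carry out a Galerkin approximation: pick a countable basis $\{\b e_k\}$ of the separable Hilbert space $\b V$, and for each $n$ look for $\b w_n\in\mathrm{span}\{\b e_1,\ldots,\b e_n\}$ satisfying the equation tested against the first $n$ basis functions. This is a finite-dimensional nonlinear system in $\R^n$; the a~priori estimate above, applied to $\b w_n$, together with Brouwer's fixed-point theorem (in the classical Navier--Stokes form, e.g.\ the corollary to Brouwer's theorem used in Temam/Galdi), produces a solution at each level with $\|\nabla\b w_n\|_{\b L^2(\Omega)}$ uniformly bounded.

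Finally, I would pass to the limit. A subsequence of $\b w_n$ converges weakly in $\b V$ to some $\b w$. The linear terms pass to the limit trivially, the $\alpha_\epsilon(\varphi)$-term by the compact embedding $\b H^1\hookrightarrow\b L^2$ since $\alpha_\epsilon(\varphi)\in L^\infty(\Omega)$, and the critical trilinear term $b(\b w_n,\b w_n,\b e_k)$ converges to $b(\b w,\b w,\b e_k)$ for each fixed $k$ by Lemma~\ref{l:TrilinearFormStrongCont}. A density argument extends the identity to all $\b v\in\b V$, and $\b u:=\b w+\b V_\eta\in\b U$ is the desired solution. The only genuine obstacle is the nonlinear convective term, which is handled exactly by the combination of Galdi's small-extension lemma (for the a~priori estimate) and the weak-to-strong continuity stated in Lemma~\ref{l:TrilinearFormStrongCont} (for the limit passage).
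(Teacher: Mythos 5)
Your proof is correct, but it takes a genuinely different route from the paper. Both arguments begin identically: lift the boundary data with the solenoidal extension of Lemma~\ref{l:StatNSExistSolVF} chosen so that $|b(\b z,\b V_\eta,\b z)|\le\eta\|\nabla\b z\|_{\b L^2(\Omega)}^2$ with $\eta<\mu$, reduce to a homogeneous problem on $\b V$, and exploit $\alpha_\epsilon(\varphi)\ge0$ together with \eqref{e:TrilinearformLastTwoEqualZero}--\eqref{e:TrilinearformLastTwoSwitch} to get coercivity. From there the paper does not use a Galerkin scheme at all: it writes the problem as $A+B$ with $A$ the linear part (drag term, Laplacian, and the two linearized convective terms involving the lifting), shows $A$ is monotone thanks to the smallness of $b(\cdot,\b V_\eta,\cdot)$, notes that $B(\b v)=b(\b v,\b v,\cdot)$ is strongly continuous by Lemma~\ref{l:TrilinearFormStrongCont}, and invokes the main theorem on pseudo-monotone operators (following \cite[Theorem 72.A]{zeidler4}) to conclude existence in one stroke. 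Your Galerkin--Brouwer construction is the classical Temam/Galdi alternative: it is more elementary and self-contained, essentially re-proving by hand (uniform a priori bound, finite-dimensional Brouwer argument, weak limit passage using the compact embedding for the $\alpha_\epsilon$-term and Lemma~\ref{l:TrilinearFormStrongCont} for $b(\b w_n,\b w_n,\b e_k)$, then density) exactly the compactness content that the pseudo-monotone machinery packages abstractly. What the paper's route buys is brevity and no need for a basis or finite-dimensional solvability step; what yours buys is independence from the abstract operator-theoretic theorem and an approximation scheme that could in principle be used constructively. Both hinge on the same two ingredients, so the substitution is legitimate; just make sure, as you did, that the drag term is kept on the coercive side (it is only non-negative, not coercive, since $\alpha_\epsilon(1)=0$ is allowed) and that the fixed datum $\b F$ indeed lies in $\b V^\ast$, which holds because $\alpha_\epsilon(\varphi)\b V_\eta\in\b L^2(\Omega)$.
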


\begin{proof}	
	For showing the existence of a velocity field $\b u\in\b U$ satisfying $\eqref{e:StatNSGenConstraintsWeak}$ we apply the arguments of \cite[Theorem 72.A]{zeidler4}, which is an application of the theory on pseudo-monotone operators. To this end, we fix $\varphi\in L^1(\Omega)$ with $\left|\varphi\right|\leq1$ a.e. in $\Omega$.\\
 		At first, we rewrite the non-homogeneous problem into a homogeneous one analogously to \cite[Theorem 1.5, Chapter II]{temam} by defining $\b\psi\in\b H^1(\Omega)$ as a solution of
		\begin{align*}
 			\div\b\psi=0\quad\text{in }\Omega,\qquad\quad	\b\psi=\b g\quad\text{on }\partial\Omega,
 		\end{align*}	
 		
 		such that 
 		\begin{align}\label{e:DiffuseExistenceSolProofPsiInequalityProperty}b\left(\b v,\b \psi,\b v\right)\leq\frac\mu2\left\|\nabla\b v\right\|_{\b L^2(\Omega)}^2\quad\forall\b v\in\b V.\end{align}
 		The existence of such a function $\b\psi$ follows from Lemma~\ref{l:StatNSExistSolVF}. Then $\b u\in\b U$ solves $\eqref{e:StatNSGenConstraintsWeak}$ if and only if $\b{\hat u}=\b u-\b\psi\in\b V$ fulfills 
 		\begin{equation}\label{e:StatNSExistenceReformulationHomogenous}\begin{split}
 			\int_\Omega\alpha_\epsilon\left(\varphi\right)\b{\hat u}\cdot\b v+\mu\nabla\b{\hat u}\cdot\nabla\b v\dx+b\left(\b{\hat u},\b{\hat u},\b v\right)+b\left(\b{\hat u},\b\psi,\b v\right)+b\left(\b\psi,\b{\hat u},\b v\right)=\left\langle\b{\hat f},\b v\right\rangle_{\b H^{-1}(\Omega)}
 		\end{split}\end{equation}
 		for all $\b v\in\b V$	where we defined $\b{\hat f}:=\b f+\mu\Delta\b\psi-\b\psi\cdot\nabla\b\psi-\alpha_\epsilon(\varphi)\b\psi\in\b H^{-1}(\Omega).$	Then we can deduce that the linear operator $A:\b V\to\b V'$, which is given by
 		$$A(\b v)(\b w):=\int_\Omega\alpha_\epsilon(\varphi)\b v\cdot\b w+\mu\nabla\b v\cdot\nabla\b w\dx+b\left(\b v,\b \psi,\b w\right)+b\left(\b\psi,\b v,\b w\right)\quad\forall\b v,\b w\in\b V,$$
 		is monotone because
 		\begin{equation}\begin{split}\label{e:StatNSExistProofMonotonEstimateForA}
 			\left\langle A\b v-A\b w,\b v-\b w\right\rangle_{\b V'}&=\underbrace{\int_\Omega\alpha_\epsilon\left(\varphi\right)\left|\b v-\b w\right|^2\dx}_{\geq0}+\mu\left\|\nabla\left(\b v-\b w\right)\right\|_{\b L^2(\Omega)}^2\underbrace{+b\left(\b v-\b w,\b \psi,\b v-\b w\right)}_{\stackrel{\eqref{e:DiffuseExistenceSolProofPsiInequalityProperty}}{\geq}-\frac\mu2\left\|\nabla\left(\b v-\b w\right)\right\|_{\b L^2(\Omega)}^2}+\\
 			&+\underbrace{b\left(\b \psi,\b v-\b w,\b v-\b w\right)}_{\stackrel{\eqref{e:TrilinearformLastTwoEqualZero}}{=}0}\geq \frac\mu2\left\|\nabla\left(\b v-\b w\right)\right\|_{\b L^2(\Omega)}^2\geq0\quad\forall\b v,\b w\in\b V.
 		\end{split}\end{equation}
 		Thus, $A$ is a monotone and linear operator and therefore pseudo-monotone. Defining $B:\b V\to\b V'$ by 
 		$$B\left(\b v\right)\left(\b w\right)=b\left(\b v,\b v,\b w\right)=-b\left(\b v,\b w,\b v\right)\quad\forall\b v,\b w\in\b V$$
 		we see that $B$ is strongly continuous (see Lemma~\ref{l:TrilinearFormStrongCont}) and thus $A+B$ is pseudo-monotone. Moreover, since both $B$ and $A$ are bounded, we get that $A+B$ is a bounded operator, and from
 		$B\left(\b v\right)\left(\b v\right)=b\left(\b v,\b v,\b v\right)=0$ and estimate $\eqref{e:StatNSExistProofMonotonEstimateForA}$ we see that $A+B:\b V\to\b V'$ is coercive.\\
 		For this reason, we can apply the main theorem on pseudo-monotone operators (see for instance \cite[27.3]{zeidler2b}) to get the existence of some $\b{\hat u}\in\b V$ such that $\eqref{e:StatNSExistenceReformulationHomogenous}$ is fulfilled, which implies that $\b u:=\b{\hat u}+\b\psi\in\b U$ fulfills $\eqref{e:StatNSGenConstraintsWeak}$.\\

\end{proof}

In general we won't have a unique solution $\b u$ of $\eqref{e:StatNSGenConstraintsWeak}$. But under an additional assumption, which will be fulfilled for example for minimizers of $\eqref{e:StatNSGenMinProblemFctl}-\eqref{e:StatNSGenConstraintsWeak}$ if $\epsilon$ is small enough, see Corollary~\ref{c:StatNSUEpsilonUniqueIfEpsilonSmall}, we can show uniqueness:

\begin{lem}\label{l:EpsilonUniqueStateEquiForSmallU}
	Assume that there exists a solution $\b u\in\b U$ of $\eqref{e:StatNSGenConstraintsWeak}$ such that it holds
	\begin{align}\label{e:NecessarySmallnessUForEpsilonUnique}\left\|\nabla\b u\right\|_{\b L^2(\Omega)}<\frac{\mu}{K_\Omega}.\end{align}
	Then this is the only solution of $\eqref{e:StatNSGenConstraintsWeak}$.
\end{lem}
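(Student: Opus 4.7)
The plan is the standard uniqueness argument for Navier--Stokes-type equations, adapted to the porous-medium term. I let $\b u$ be the solution satisfying the smallness assumption \eqref{e:NecessarySmallnessUForEpsilonUnique} and suppose $\b w\in\b U$ is any other solution of \eqref{e:StatNSGenConstraintsWeak}. Setting $\b z:=\b u-\b w$, I note that $\b z\in\b V$: indeed, $\div\b z=0$ since both $\b u$ and $\b w$ are divergence-free, and $\b z|_{\partial\Omega}=\b g-\b g=\b 0$ so that $\b z\in\b H^1_0(\Omega)$. Hence $\b z$ is an admissible test function.

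Next, I subtract the two weak formulations \eqref{e:StatNSGenConstraintsWeak} and rewrite the convective difference via the identity $b(\b u,\b u,\b v)-b(\b w,\b w,\b v)=b(\b z,\b u,\b v)+b(\b w,\b z,\b v)$. Testing against $\b v=\b z\in\b V$ yields
\begin{equation*}
\int_\Omega\alpha_\epsilon(\varphi)|\b z|^2\dx+\mu\left\|\nabla\b z\right\|_{\b L^2(\Omega)}^2+b(\b z,\b u,\b z)+b(\b w,\b z,\b z)=0.
\end{equation*}
The term $b(\b w,\b z,\b z)$ vanishes by property \eqref{e:TrilinearformLastTwoEqualZero}, since $\div\b w=0$ and $\b z\in\b H^1_0(\Omega)$. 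Using nonnegativity of $\alpha_\epsilon$ and the continuity estimate \eqref{e:ContinuityEstimateTrilinearForm}, which applies because $\b z\in\b H^1_0(\Omega)$ and $\b u\in\b H^1(\Omega)$, I obtain
\begin{equation*}
\mu\left\|\nabla\b z\right\|_{\b L^2(\Omega)}^2\leq-b(\b z,\b u,\b z)\leq K_\Omega\left\|\nabla\b u\right\|_{\b L^2(\Omega)}\left\|\nabla\b z\right\|_{\b L^2(\Omega)}^2.
\end{equation*}

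Rearranging gives $\bigl(\mu-K_\Omega\|\nabla\b u\|_{\b L^2(\Omega)}\bigr)\|\nabla\b z\|_{\b L^2(\Omega)}^2\leq 0$. By assumption \eqref{e:NecessarySmallnessUForEpsilonUnique} the prefactor is strictly positive, so $\nabla\b z=\b 0$, and since $\b z\in\b H^1_0(\Omega)$ the Poincar\'e inequality forces $\b z=\b 0$, i.e. $\b u=\b w$. There is no serious obstacle here: the whole argument hinges on matching the smallness threshold in \eqref{e:NecessarySmallnessUForEpsilonUnique} precisely with the constant $K_\Omega$ from \eqref{e:ContinuityEstimateTrilinearForm}, so the main thing to double-check is that $\b z$ really lies in $\b H^1_0(\Omega)$ (to legally apply both \eqref{e:ContinuityEstimateTrilinearForm} and \eqref{e:TrilinearformLastTwoEqualZero}) and that the porous-medium contribution only helps by being nonnegative.
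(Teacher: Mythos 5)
Your argument is correct and is essentially the paper's own proof: subtract the weak formulations, test with the difference $\b z$, use \eqref{e:TrilinearformLastTwoEqualZero} and the nonnegativity of $\alpha_\epsilon$, and absorb $b(\b z,\b u,\b z)$ via \eqref{e:ContinuityEstimateTrilinearForm} together with the smallness condition \eqref{e:NecessarySmallnessUForEpsilonUnique} and Poincar\'e's inequality. The only cosmetic difference is that the paper expands the convective difference as $b(\b z,\b z,\b v)+b(\b z,\b u,\b v)+b(\b u,\b z,\b v)$ and annihilates two terms, while you use the equivalent two-term identity $b(\b z,\b u,\b v)+b(\b w,\b z,\b v)$ and annihilate one; the resulting inequality is identical.
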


\begin{proof}
 Assume $\b u\in\b U$ fulfills $\eqref{e:StatNSGenConstraintsWeak}$ and it holds $\eqref{e:NecessarySmallnessUForEpsilonUnique}$. Moreover, assume $\widehat{\b u}\in\b U$ is another solution of $\eqref{e:StatNSGenConstraintsWeak}$. 
Similar to \cite[Theorem IX.2.1]{galdi} we define $\b z:=\widehat{\b u}-\b u$ and see that $\b z$ satisfies
 $$\int_\Omega\alpha_\epsilon\left(\varphi\right)\b z\cdot\b v\dx+\mu\int_\Omega\nabla\b z\cdot\nabla\b v\dx+b\left(\widehat{\b u},\widehat{\b u},\b v\right)-b\left(\b u,\b u,\b v\right)=0\qquad\forall\b v\in\b V.$$
 Using the trilinearity of $b$ this can be rewritten as
 $$\int_\Omega\alpha_\epsilon\left(\varphi\right)\b z\cdot\b v\dx+\mu\int_\Omega\nabla\b z\cdot\nabla\b v\dx+b\left(\b z,\b z,\b v\right)+b\left(\b z,\b u,\b v\right)+b\left(\b u,\b z,\b v\right)=0\qquad\forall\b v\in\b V.$$
 Inserting $\b z\in\b V$ as a test function and using Lemma~\ref{l:PropertiesTrilinearForm} we obtain therefrom
 
 $$\int_\Omega\alpha_\epsilon\left(\varphi\right)\left|\b z\right|^2\dx+\mu\int_\Omega\left|\nabla\b z\right|^2\dx+b\left(\b z,\b u,\b z\right)=0.$$
 This gives us in view of $\alpha_\epsilon\geq 0$ and $\eqref{e:ContinuityEstimateTrilinearForm}$
 $$\mu\left\|\nabla\b z\right\|_{\b L^2(\Omega)}^2\leq K_\Omega\left\|\nabla\b z\right\|_{\b L^2(\Omega)}^2\left\|\nabla\b u\right\|_{\b L^2(\Omega)}.$$
 Finally, we see from $\eqref{e:NecessarySmallnessUForEpsilonUnique}$ that
$$\underbrace{\left(\mu-K_\Omega\left\|\nabla\b u\right\|_{\b L^2(\Omega)}\right)}_{>0}\left\|\nabla\b z\right\|_{\b L^2(\Omega)}^2\leq 0$$
which implies together with Poincar\'e's inequality $\b z\equiv\b 0$ and thus the stated uniqueness.
 
\end{proof}

Let us now analyze the overall optimization problem given by $\eqref{e:StatNSGenMinProblemFctl}$-$\eqref{e:StatNSGenConstraintsWeak}$. After having considered the state constraints, we can deduce well-posedness of the problem as the next theorem will show.

\begin{thm}\label{l:StatNSDiffuseGenerelFctExistMini}
	There exists at least one minimizer of $\eqref{e:StatNSGenMinProblemFctl}-\eqref{e:StatNSGenConstraintsWeak}$.
\end{thm}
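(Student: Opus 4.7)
The plan is to apply the direct method of the calculus of variations. The admissible set is non-empty since $\b S_\epsilon(\varphi)\neq\emptyset$ for every $\varphi\in\Phi_{ad}$ by Lemma~\ref{l:StatNSExistSolOperatorSEpsilon}, and each summand of $J_\epsilon$ is bounded below on $\Phi_{ad}\times\b U$ (the porous penalty and the Ginzburg--Landau energy are non-negative, and $F|_{\b U}$ is bounded below by assumption on $f$). Fix a minimizing sequence $(\varphi_n,\b u_n)$ with $\b u_n\in\b S_\epsilon(\varphi_n)$; the goal is to extract a subsequence converging to an admissible limit and to verify weak lower semicontinuity.

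Uniform bounds on $\varphi_n$ in $H^1(\Omega)$ are immediate: the pointwise bound $|\varphi_n|\leq 1$ takes care of the $L^2$-part while the Ginzburg--Landau contribution in $J_\epsilon(\varphi_n,\b u_n)$ controls $\|\nabla\varphi_n\|_{L^2}$. For $\b u_n$ I would lift the boundary data as in the existence proof, writing $\b u_n=\widehat{\b u}_n+\b\psi$ with $\b\psi$ selected via Lemma~\ref{l:StatNSExistSolVF} so that $b(\b v,\b\psi,\b v)\leq\tfrac{\mu}{2}\|\nabla\b v\|_{\b L^2(\Omega)}^2$ for all $\b v\in\b V$, and then test the state equation with $\widehat{\b u}_n\in\b V$. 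Using $\eqref{e:TrilinearformLastTwoEqualZero}$ to eliminate $b(\widehat{\b u}_n,\widehat{\b u}_n,\widehat{\b u}_n)$ and $b(\b\psi,\widehat{\b u}_n,\widehat{\b u}_n)$, the choice of $\b\psi$ to absorb $b(\widehat{\b u}_n,\b\psi,\widehat{\b u}_n)$ into the viscous dissipation, non-negativity of the $\alpha_\epsilon$-term, and Young's inequality on the remaining bilinear pieces together yield a uniform bound on $\|\nabla\widehat{\b u}_n\|_{\b L^2}$, hence on $\|\b u_n\|_{\b H^1(\Omega)}$. Passing to subsequences gives $\varphi_n\rightharpoonup\varphi$ in $H^1(\Omega)$ with a.e.\ convergence, and $\b u_n\rightharpoonup\b u$ in $\b H^1(\Omega)$ with strong convergence in every $\b L^r(\Omega)$, $r<6$.

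It remains to check admissibility and weak lower semicontinuity. The bound $|\varphi|\leq 1$, the volume constraint, the trace condition $\b u|_{\partial\Omega}=\b g$, and $\div\b u=0$ are all preserved under the above convergences. To pass to the limit in $\eqref{e:StatNSGenConstraintsWeak}$, the viscous term passes by weak $H^1$-convergence; the Darcy term converges because continuity and uniform boundedness of $\alpha_\epsilon$ give $\alpha_\epsilon(\varphi_n)\to\alpha_\epsilon(\varphi)$ a.e., hence in every $L^q(\Omega)$ by dominated convergence, which combines with $\b u_n\to\b u$ strongly in $\b L^2(\Omega)$; the convective term passes by the strong continuity in Lemma~\ref{l:TrilinearFormStrongCont}. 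Weak lower semicontinuity of $J_\epsilon$ then follows from the assumed weak lsc of $F$, the convexity of $\varphi\mapsto\int_\Omega\tfrac{\epsilon}{2}|\nabla\varphi|^2\dx$, dominated convergence for $\int_\Omega\tfrac{1}{\epsilon}\psi(\varphi_n)\dx$, and the same Darcy-type argument applied to $\int_\Omega\alpha_\epsilon(\varphi_n)|\b u_n|^2\dx$. The main obstacle I expect is the uniform $\b H^1$-bound on $\b u_n$, because the standard Navier--Stokes a priori estimate only closes thanks to the Lemma~\ref{l:StatNSExistSolVF} lift that makes $b(\widehat{\b u}_n,\b\psi,\widehat{\b u}_n)$ absorbable into $\mu\|\nabla\widehat{\b u}_n\|_{\b L^2(\Omega)}^2$; everything else consists of standard weak-convergence bookkeeping.
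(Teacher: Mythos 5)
Your proposal is correct and follows essentially the same route as the paper: a minimizing sequence, the uniform $\b H^1$-bound on the velocities obtained by testing the homogenized state equation with $\widehat{\b u}_n$ after the Lemma~\ref{l:StatNSExistSolVF} lift, weak compactness, passage to the limit in $\eqref{e:StatNSGenConstraintsWeak}$ via Lemma~\ref{l:TrilinearFormStrongCont} and dominated-convergence arguments for the $\alpha_\epsilon$-terms, and finally weak lower semicontinuity of $J_\epsilon$. The only cosmetic difference is that you justify the lower semicontinuity of the Ginzburg--Landau and Darcy terms slightly more explicitly (convexity, respectively strong $\b L^r$ convergence) than the paper does, which is harmless.
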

\begin{proof}
	We start by choosing an admissible minimizing sequence $\left(\varphi_k,\b u_k\right)_{k\in\N}\subseteq \Phi_{ad}\times\b U$, which means in particular that $\b u_k\in\b S_\epsilon\left(\varphi_k\right)$. We use the state equation $\eqref{e:StatNSGenConstraintsWeak}$ to deduce a uniform bound on $\left\|\b u_k\right\|_{\b H^1(\Omega)}$ as follows:\\
		Let $\b \psi\in\b H^1(\Omega)$ be such that $\div\b\psi=0$,  $\b\psi|_{\partial\Omega}=\b g$ and $b\left(\b v,\b\psi,\b v\right)\leq\frac\mu2\left\|\nabla\b v\right\|_{\b L^2(\Omega)}^2$ for all $\b v\in\b V$, which can be chosen due to Lemma~\ref{l:StatNSExistSolVF}. Then we see that $\b{\hat u}_k:=\b u_k-\b\psi\in\b V$ is a solution to $\eqref{e:StatNSExistenceReformulationHomogenous}$ with $\varphi$ replaced by $\varphi_k$. Testing this equation with $\b v=\b{\hat u}_k$ it follows
	
	\begin{equation}\begin{split}\label{e:EpsilonMinExistUniformBoundUReformulate}
		\underbrace{\left(\alpha_\epsilon\left(\varphi_k\right)\b{\hat u}_k,\b{\hat u}_k\right)_{\b L^2(\Omega)}}_{\geq 0}+\mu\left\|\nabla\b{\hat u}_k\right\|_{\b L^2(\Omega)}^2+\underbrace{b\left(\b{\hat u}_k,\b\psi,\b{\hat u}_k\right)}_{\geq-\frac\mu2\left\|\nabla\b{\hat u}_k\right\|_{\b L^2(\Omega)}^2}=\left\langle\b{\hat f}_k,\b{\hat u}_k\right\rangle_{\b H^{-1}\left(\Omega\right)}=\\
		=\left(\b f,\b{\hat u}_k\right)_{\b L^2(\Omega)}-\mu\int_\Omega\nabla\b\psi\cdot\nabla\b{\hat u}_k\dx-b\left(\b\psi,\b\psi,\b{\hat u}_k\right)-\int_\Omega\alpha_\epsilon\left(\varphi_k\right)\b\psi\cdot\b{\hat u}_k\dx.
	\end{split}\end{equation}
	Now using the inequalities of Poincar\'e and Young we can deduce therefrom the existence of some constant $c>0$ such that
	\begin{align}\label{e:EpsilonMinExistUniformBoundUResult}\left\|\nabla\b{\hat u}_k\right\|_{\b L^2(\Omega)}^2\leq c\left(\left\|\b f\right\|_{\b L^2(\Omega)}^2+\mu\left\|\nabla\b\psi\right\|_{\b L^2(\Omega)}^2+\left\|\b\psi\right\|_{\b H^1(\Omega)}^4+\overline\alpha_\epsilon^2\left\|\b \psi\right\|_{\b L^2(\Omega)}^2\right).\end{align}
	Applying again Poincar\'e's inequality and inserting $\b u_k=\b{\hat u}_k+\b\psi$ we obtain therefrom a bound on $\left\|\b u_k\right\|_{\b H^1(\Omega)}$ uniform in $k\in\N$. Moreover, the uniform bound on $\left(J_\epsilon\left(\varphi_k,\b u_k\right)\right)_{k\in\N}$ implies that $\sup_{k\in\N}\left\|\nabla\varphi_k\right\|_{L^2(\Omega)}<\infty$. Besides $\varphi_k\in\Phi_{ad}$ for all $k\in\N$, and so $\|\varphi_k\|_{L^\infty(\Omega)}\leq1$ $\forall k\in\N$. And so we get, after possibly choosing subsequences, the following convergence results: $\b u_k\rightharpoonup\b u_0$ in $\b H^1(\Omega)$, $\varphi_k\rightharpoonup \varphi_0$ in $H^1(\Omega)$ and thus $\lim_{k\to\infty}\|\varphi_k-\varphi_0\|_{L^2(\Omega)}=0$, $\lim_{k\to\infty}\|\b u_k-\b u_0\|_{\b L^2(\Omega)}=0$ for some element $(\b u_0,\varphi_0)\in \b U\times\Phi_{ad}$. Here we used in particular that $\Phi_{ad}$ and $\b U$ are closed and convex and thus weakly closed subspaces of $H^1(\Omega)$ and $\b H^1(\Omega)$, respectively.\\
	Next we show that $\b u_0\in\b S_\epsilon\left(\varphi_0\right)$. To see this, we make use of Lebesgue's dominated convergence theorem and the pointwise convergence of the sequences $(\b u_k)_{k\in\N}$ and $(\varphi_k)_{k\in\N}$, which follows after choosing again subsequences. From this we find directly 
	$$\lim_{k\to\infty}\int_\Omega\alpha_\epsilon(\varphi_k)\b u_k\cdot\b v\dx=\int_\Omega\alpha_\epsilon(\varphi_0)\b u_0\cdot\b v\dx\quad\forall\b v\in\b V.$$
	Making use of the continuity properties of $b$ (see Lemma~\ref{l:TrilinearFormStrongCont}) we can hence take the limit $k\to\infty$ in the weak formulation of the state equation $\eqref{e:StatNSGenConstraintsWeak}$ and see that $\b u_0$ fulfills $\eqref{e:StatNSGenConstraintsWeak}$ with $\varphi$ replaced by $\varphi_0$ and thus we have shown $\b u_0\in\b S_\epsilon(\varphi_0)$.\\
	As before we can apply Lebesgue's dominated convergence theorem to deduce
	$$\lim_{k\to\infty}\int_\Omega\alpha_\epsilon(\varphi_k)|\b u_k|^2\dx=\int_\Omega\alpha_\epsilon(\varphi_0)|\b u_0|^2\dx.$$
	Using the lower semicontinuity of the objective functional we hence obtain
	$$J_\epsilon\left(\varphi_0,\b u_0\right)\leq\liminf_{k\to\infty}J_\epsilon\left(\varphi_k,\b u_k\right)$$
	which proves that $\left(\varphi_0,\b u_0\right)$ is a minimizer of $\eqref{e:StatNSGenMinProblemFctl}-\eqref{e:StatNSGenConstraintsWeak}$.

\end{proof}

\subsection{Optimality conditions in the diffuse interface setting}\label{s:PhaseFieldOptCond}

In this section we want to derive optimality conditions by geometric variations. In the end we want to obtain an optimality system for which we can consider the limit $\epsilon\searrow0$ and hope to arrive in an optimality system for the sharp interface. The corresponding optimality conditions in the sharp interface setting will be derived in Section \ref{s:SharpOptCond} and in Section \ref{s:ConvOptSys} we consider the limit process in the equations of the first variation.\\

We choose for this section $\left(\varphi_\epsilon,\b u_\epsilon\right)\in L^1(\Omega)\times\b H^1(\Omega)$ as minimizer of $\eqref{e:StatNSGenMinProblemFctl}-\eqref{e:StatNSGenConstraintsWeak}$ such that it holds
\begin{align}\label{e:StatNSOptCondSmallnessUEpsilon}
	\left\|\nabla\b u_\epsilon\right\|_{\b L^2(\Omega)}<\frac{\mu}{K_\Omega}.
\end{align}
In particular, this implies by Lemma~\ref{l:EpsilonUniqueStateEquiForSmallU} directly $\b S_\epsilon\left(\varphi_\epsilon\right)=\left\{\b u_\epsilon\right\}$. 
\begin{remark}
	We point out, that due to Corollary~\ref{c:StatNSUEpsilonUniqueIfEpsilonSmall} we obtain under certain assumptions and for $\epsilon>0$ small enough that $\eqref{e:StatNSOptCondSmallnessUEpsilon}$ is fulfilled for any minimizer $\left(\varphi_\epsilon,\b u_\epsilon\right)$ of $\eqref{e:StatNSGenMinProblemFctl}-\eqref{e:StatNSGenConstraintsWeak}$.
\end{remark}

Throughout the following section we state additionally the following assumption:

\begin{list}{\theAssCount}{\usecounter{AssCount}}\setcounter{AssCount}{\value{AssListCount}
}
 \item\label{a:Diff} 
Assume that $\alpha_\epsilon\in C^2([-1,1])$ for all $\epsilon>0$ and $\b f\in\b H^1(\Omega)$.\\
 	Assume that $x\mapsto f(x,v,A)\in\R$ is in $W^{1,1}(\Omega)$ for all $(v,A)\in\R^d\times\R^{d\times d}$ and the partial derivatives $\der_2 f\left(x,\cdot,A\right)$, $\der_3f\left(x,v,\cdot\right)$ exist for all $v\in\R^d$, $A\in\R^{d\times d}$ and a.e. $x\in\Omega$. Let $p\geq 2$ for $d=2$ and $2\leq p\leq 6$ for $d=3$ and assume that there are $\hat a\in L^1(\Omega)$, $\hat b_1,\hat b_2\in L^\infty(\Omega)$ such that for almost every $x\in\Omega$ it holds
 		\begin{align}\label{e:ObjFctLCaratheodory2}\der_{(2,3)}f\left(x,v,A\right)\leq\hat a(x)+\hat b_1(x)\left|v\right|^{p-1}+\hat b_2(x)\left|A\right|\quad\forall v\in\R^d, A\in\R^{d\times d}.\end{align}
 \setcounter{AssListCount}{\value{AssCount}}
\end{list}

	\begin{remark}
	If the objective functional fulfills Assumption~\ref{a:Diff}, we find that
$$F:\b H^1(\Omega)\ni\b u\mapsto\int_\Omega f\left(x,\b u,\der\b u\right)\dx$$
 is continuously Fr\'{e}chet differentiable and that its directional derivative is given in the following form:
 $$\der F(\b u)(\b v)=\int_\Omega\der_{(2,3)}f\left(x,\b u,\der\b u\right)\left(\b v,\der\b v\right)\dx\quad\forall \b u,\b v\in\b H^1(\Omega).$$
For details concerning Nemytskii operators we refer to \cite{showalter}. 
\end{remark}

	As we will derive first order optimality conditions by varying the domain $\Omega$ with transformations, we introduce here the admissible transformations and its corresponding velocity fields:
	
	\newcounter{VCount}
\renewcommand{\theVCount}{\textbf{(V{\arabic{VCount}})}}
\newcounter{VListCount}   
	\begin{defi}[$\mathcal V_{ad}$, $\mathcal T_{ad}$] \label{d:AdmissibleTransVel}
		 The space $\mathcal V_{ad}$ of admissible velocity fields is defined as the set of  all $V\in C\left(\left[-\tau,\tau\right]; C\left(\overline\Omega,\R^d\right)\right)$, where $\tau>0$ is some fixed, small constant, such that it holds:
		\begin{list}{\theVCount}{\usecounter{VCount}}
 \item\label{l:AssVSmooth}
 	\begin{description}\item[\textbf{(V1a)}]
		$V(t,\cdot)\in C^2\left(\overline\Omega,\R^d\right)$, \item[\hspace{0.9cm}\textbf{(V1b)}]$\exists C>0$: $\left\|V\left(\cdot,y\right)-V\left(\cdot,x\right)\right\|_{C\left(\left[-\tau,\tau\right],\R^d\right)}\leq C\left|x-y\right|\,\,\forall x,y\in\overline\Omega$,
		\end{description}
		\item\label{l:AssVNormal}
		$V(t,x)\cdot\b n(x)=0\quad\text{ on }\partial\Omega$,
		\item\label{l:AssVBoundaryG} $V(t,x)=\b0$ for a.e. $x\in\partial\Omega$ with $\b g(x)\neq \b0$.
		\setcounter{VCount}{\value{VCount}}
		\setcounter{VListCount}{\value{VCount}}
		\end{list}
		We will often use the notation $V(t)=V(t,\cdot)$.\\		
		Then the space $\mathcal T_{ad}$ of admissible transformations for the domain is defined as solutions of the ordinary differential equation
		\begin{subequations}\label{e:ODE}\begin{align}\partial_t T_t(x)=V(t,T_t(x)),\qquad T_0(x)=x\end{align}\end{subequations}
		for $V\in \mathcal V_{ad}$,
		which gives some $T:\left(-\tilde\tau,\tilde\tau\right)\times\overline\Omega\to\overline\Omega$, with $0<\tilde\tau$ small enough.
		\end{defi}
		
			\begin{remark}\label{r:NotAssPropertiesOfTransformations}
		Let $V\in{\mathcal V}_{ad}$ and $T\in{\mathcal V}_{ad}$ be the transformation associated to $V$ by $\eqref{e:ODE}$. Then $T$ admits the following properties:
		\begin{itemize}
			\item $T\left(\cdot,x\right)\in C^1\left(\left[-\tilde\tau,\tilde\tau\right],\R^d\right)$ for all $x\in\overline\Omega$,
			\item $\exists c>0, \forall x,y\in\overline\Omega$, $\left\|T\left(\cdot,x\right)-T\left(\cdot,y\right)\right\|_{C^1\left(\left[-\tilde\tau,\tilde\tau\right],\R^d\right)}\leq c\left|x-y\right|$,
			\item $\forall t\in\left[-\tilde\tau,\tilde\tau\right]$, $x\mapsto T_t(x)=T(t,x):\overline\Omega\to\overline\Omega$ is bijective,
			\item $\forall x\in\overline\Omega$, $T^{-1}(\cdot,x)\in C\left(\left[-\tilde\tau,\tilde\tau\right],\R^d\right)$,
			\item $\exists c>0,\forall x,y\in\overline\Omega,$ $\left\|T^{-1}\left(\cdot,x\right)-T^{-1}\left(\cdot,y\right)\right\|_{C\left(\left[-\tilde\tau,\tilde\tau\right],\R^d\right)}\leq c\left|x-y\right|$.
		\end{itemize}
		This is shown in \cite{delfour, deflourpaper}.
	\end{remark}

We will obtain optimality criteria by deforming the domain $\Omega$ along suitable transformations. For this purpose, we choose some $T\in{\mathcal T}_{ad}$ and denote in the following by $V\in{\mathcal V}_{ad}$ its velocity field. Let us introduce the notation
$$\varphi_\epsilon(t):=\varphi_\epsilon\circ T_t^{-1},\quad \Omega_t:=T_t(\Omega).$$
We choose elements solving the state equations corresponding to $\varphi_\epsilon(t)$:$$\b u_\epsilon(t)\in\b S_\epsilon\left(\varphi_\epsilon(t)\right).$$
This is possible since the choice of $T\in{\mathcal T}_{ad}$ implies for $\varphi_\epsilon\in\Phi_{ad}$ that $\varphi_\epsilon(t)\in\overline\Phi_{ad}$, see also Lemma~\ref{l:StatNSExistSolOperatorSEpsilon}.\\

So far, it is not clear if $\b S_\epsilon\left(\varphi_\epsilon\left(t\right)\right)=\left\{\b u_\epsilon(t)\right\}$, even though this holds true for $t=0$. But the implicit function theorem will guarantee uniqueness for small $t$, thus $\b S_\epsilon\left(\varphi_\epsilon\left(t\right)\right)=\left\{\b u_\epsilon(t)\right\}$ for $t$ small enough, and will give us at the same time differentiability of $t\mapsto\left(\b u_\epsilon(t)\circ T_t\right)$ at $t=0$, as the following lemma shows:

\begin{lem}\label{l:StatNSGenFctDiffVarParUDotpDotExist}
For $t$ small enough, we have $\b S_\epsilon\left(\varphi_\epsilon\left(t\right)\right)=\left\{\b u_\epsilon(t)\right\}$, thus the state equations $\eqref{e:StatNSGenConstraintsWeak}$ corresponding to $\varphi_\epsilon(t)$ have a \emph{unique} solution if $t$ is small enough.\\

	Moreover, we get that the mapping $\R\supset I\ni t\mapsto \b u_\epsilon(t)\circ T_t\in\b H^1(\Omega)$ is differentiable at $t=0$ (where $I$ is a small interval around 0) and $\dot{\b u}_\epsilon\left[V\right]:=\partial_t|_{t=0}\left(\b u_\epsilon(t)\circ T_t\right)$ is given as the unique weak solution to
		\begin{equation}\label{e:StatNSGenDiffVarParDotUDeltaEpsilonEquation}\begin{split}
		&\int_\Omega\alpha_\epsilon(\varphi_\epsilon)\dot{\b u}_\epsilon\left[V\right]\cdot\b z+\mu\nabla\dot{\b u}_\epsilon\left[V\right]\cdot\nabla\b z\dx+b\left(\b u_\epsilon,\dot{\b u}_\epsilon\left[V\right],\b z\right)+b\left(\dot{\b u}_\epsilon\left[V\right],\b u_\epsilon,\b z\right)=\\
		&=\int_\Omega\mu\der V(0)^T\nabla\b u_\epsilon:\nabla\b z\dx+\int_\Omega\mu\nabla\b u_\epsilon:\der V(0)^T\nabla\b z\dx+\\
		&+\int_\Omega\mu\nabla\b u_\epsilon:\nabla\left(\div V(0)\b z-\der V(0)\b z\right)\dx-\int_\Omega\mu\nabla\b u_\epsilon:\nabla\b z\div V(0)\dx+\\
		&+b\left(\der V(0)\b u_\epsilon,\b u_\epsilon,\b z\right)-b\left(\b u_\epsilon,\b u_\epsilon,\der V(0)\b z\right)+\int_\Omega\left(\nabla\b f\cdot V(0)\right)\cdot\b z\dx+\\
		&+\int_\Omega\b f\cdot\der V(0)\b z\dx-\int_\Omega\alpha_\epsilon\left(\varphi_\epsilon\right)\b u_\epsilon\cdot\der V(0)\b z\dx
	\end{split}\end{equation}
	which has to hold for all $\b z\in\b V$, together with
	\begin{align}\label{e:StatNSDiffParlVarDivDotBUEpsilonDelta}\div\dot{\b u}_\epsilon\left[V\right]=\nabla\b u_\epsilon:\der V(0).\end{align}
\end{lem}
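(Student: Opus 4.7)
The strategy is to transport the $t$-dependent state equation back to the reference domain and apply the implicit function theorem at $t=0$. Since $V(t,\cdot)\cdot\b n=0$ on $\partial\Omega$ by (V2), the flow $T_t$ maps $\overline\Omega$ to itself and $\Omega_t=\Omega$, while (V3) ensures $\b g\circ T_t=\b g$ on $\partial\Omega$. To preserve the divergence-free constraint, I would use the Piola transform $\widetilde{\b U}(t):=\det(\der T_t)\,(\der T_t)^{-1}(\b u_\epsilon(t)\circ T_t)$, which satisfies $\div\widetilde{\b U}(t)=0$ in $\Omega$ and $\widetilde{\b U}(t)|_{\partial\Omega}=\b g$, so that $\widetilde{\b U}(t)\in\b U$. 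Change of variables in $\eqref{e:StatNSGenConstraintsWeak}$ (written with coefficient $\varphi_\epsilon(t)=\varphi_\epsilon\circ T_t^{-1}$ and test functions transported analogously) rewrites the state equation as a relation $F(t,\widetilde{\b U}(t))=0$ in $\b V'$, in which the coefficient $\varphi_\epsilon$ is now $t$-independent and all $t$-dependence is packaged into smooth matrix factors built from $\der T_t$ and $\det(\der T_t)$.

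\textbf{Regularity and invertibility of the linearisation.} Assumption (A5), the regularity of $T_t$ recalled in Remark~\ref{r:NotAssPropertiesOfTransformations}, and the Nemytskii-type bounds available for $\alpha_\epsilon$ and $\b f$ guarantee that $F:(-\tilde\tau,\tilde\tau)\times\b U\to\b V'$ is of class $C^1$ near $(0,\b u_\epsilon)$. Its partial derivative $\der_2 F(0,\b u_\epsilon):\b V\to\b V'$ acts on $\b w\in\b V$ as
$$\b z\mapsto\int_\Omega\alpha_\epsilon(\varphi_\epsilon)\b w\cdot\b z\dx+\mu\int_\Omega\nabla\b w:\nabla\b z\dx+b(\b u_\epsilon,\b w,\b z)+b(\b w,\b u_\epsilon,\b z).$$
Testing against $\b w$ itself, $\eqref{e:TrilinearformLastTwoEqualZero}$ annihilates $b(\b u_\epsilon,\b w,\b w)$ and $\eqref{e:ContinuityEstimateTrilinearForm}$ bounds $|b(\b w,\b u_\epsilon,\b w)|\leq K_\Omega\|\nabla\b u_\epsilon\|_{\b L^2(\Omega)}\|\nabla\b w\|_{\b L^2(\Omega)}^2$. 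The smallness hypothesis $\eqref{e:StatNSOptCondSmallnessUEpsilon}$ therefore yields coercivity with constant $\mu-K_\Omega\|\nabla\b u_\epsilon\|_{\b L^2(\Omega)}>0$, so by Lax--Milgram the linearisation is an isomorphism.

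\textbf{IFT and derivation of the equation.} The implicit function theorem then supplies, for small $t$, a unique $C^1$-branch $t\mapsto\widetilde{\b U}(t)$ with $\widetilde{\b U}(0)=\b u_\epsilon$; inverting the Piola transform yields $\b u_\epsilon(t)\circ T_t=\det(\der T_t)^{-1}\,\der T_t\,\widetilde{\b U}(t)$, which is $C^1$ in $t$. The strict inequality $\eqref{e:StatNSOptCondSmallnessUEpsilon}$ persists for small $t$ by continuity, so Lemma~\ref{l:EpsilonUniqueStateEquiForSmallU} promotes local to global uniqueness, $\b S_\epsilon(\varphi_\epsilon(t))=\{\b u_\epsilon(t)\}$. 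Differentiating the relation above at $t=0$ gives
$$\dot{\b u}_\epsilon[V]=\dot{\widetilde{\b U}}[V]-(\div V(0)\,I-\der V(0))\b u_\epsilon.$$
To obtain $\eqref{e:StatNSGenDiffVarParDotUDeltaEpsilonEquation}$ one differentiates $F(t,\widetilde{\b U}(t))=0$ at $t=0$ using $\partial_t\det(\der T_t)|_{t=0}=\div V(0)$, $\partial_t(\der T_t)^{-1}|_{t=0}=-\der V(0)$, $\partial_t(\b f\circ T_t)|_{t=0}=\nabla\b f\cdot V(0)$, and re-expresses everything in terms of $\dot{\b u}_\epsilon[V]$; each of the right-hand-side terms of $\eqref{e:StatNSGenDiffVarParDotUDeltaEpsilonEquation}$ corresponds to the $t$-derivative of one ingredient of the transported equation. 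Differentiating the transported form of $\div\b u_\epsilon(t)=0$ at $t=0$ yields $\eqref{e:StatNSDiffParlVarDivDotBUEpsilonDelta}$.

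\textbf{Main obstacle.} The principal technical hurdle is the change-of-variables bookkeeping: verifying that the Piola-transported equation defines a $C^1$-map $F$ into $\b V'$ and checking that its $t$-derivative at $0$ reorganises precisely into the terms listed in $\eqref{e:StatNSGenDiffVarParDotUDeltaEpsilonEquation}$. Once this algebra is in place, the analytic heart of the argument --- coercivity of the linearisation driven by $\eqref{e:StatNSOptCondSmallnessUEpsilon}$ --- is short, and the implicit function theorem delivers both the uniqueness statement and the differentiability simultaneously.
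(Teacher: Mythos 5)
Your proposal is correct in substance but follows a genuinely different route from the paper's. The paper takes the composed velocity $\b u_\epsilon(t)\circ T_t$ itself as the unknown in $\b H^1_{\b g}(\Omega)$ and builds the transported divergence constraint into the map as a second component, $F=(F_1,F_2):I\times\b H^1_{\b g}(\Omega)\to\b V'\times L^2_0(\Omega)$ with $F_2(t,\b u)=(\der T_t^{-1}:\nabla\b u)\det\der T_t$; inverting the linearisation then requires not only the coercivity coming from $\eqref{e:StatNSOptCondSmallnessUEpsilon}$ but also the solvability result for the divergence operator (Bogovskii-type, \cite[Lemma II.2.1.1]{sohr}) to handle the $L^2_0(\Omega)$-component, and the uniqueness $\b S_\epsilon(\varphi_\epsilon(t))=\{\b u_\epsilon(t)\}$ is read off from the uniqueness part of the implicit function theorem. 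You instead make the unknown divergence-free from the start via the Piola transform $\widetilde{\b U}(t)=\det(\der T_t)(\der T_t)^{-1}(\b u_\epsilon(t)\circ T_t)$, so the implicit function theorem is applied on (a translate of) $\b V$ and the isomorphism of the linearisation follows from plain Lax--Milgram; the constraint $\eqref{e:StatNSDiffParlVarDivDotBUEpsilonDelta}$ is then recovered a posteriori by differentiating the Piola factor, and global uniqueness comes from continuity of the branch plus Lemma~\ref{l:EpsilonUniqueStateEquiForSmallU}, exactly in the spirit of Corollary~\ref{c:CorollaryUniqueSolSmallDeforStatNS}. What your version buys is that it avoids the divergence-solvability lemma and gives a cleaner uniqueness argument (the implicit function theorem only yields \emph{local} uniqueness, which the paper's phrasing glosses over, whereas your smallness argument is genuinely global); it is also structurally closer to the paper's own treatment of the sharp interface case in Lemma~\ref{l:StatNSSharpParVarUTCIrcTTDiffT0}, where the same Piola-transformed variable $m(t)$ is used. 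Two points you should make explicit: first, that the Piola transform preserves the boundary datum $\b g$ is not automatic from \textbf{(V3)} alone at the pointwise level and is precisely the content of the cited construction in \cite[Lemma 5]{GarckeHechtStokes}, so it deserves a reference rather than a bare assertion; second, the re-expression of the differentiated transported equation in terms of $\dot{\b u}_\epsilon[V]=\dot{\widetilde{\b U}}[V]-(\div V(0)I-\der V(0))\b u_\epsilon$ must be checked to reproduce every term of $\eqref{e:StatNSGenDiffVarParDotUDeltaEpsilonEquation}$ --- you flag this as bookkeeping, which is acceptable since the paper likewise defers these computations to \cite{GarckeHechtStokes, hecht}, but in your setting there are extra terms generated by differentiating the Piola factor on the solution as well as on the test function, so the algebra is slightly heavier than in the paper's parametrisation.
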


\begin{proof}
We apply arguments similar to \cite[Theorem 2]{GarckeHechtStokes} after changing the definition of the function $F$ to

$$F:I\times\b H^1_{\b g}\left(\Omega\right)\to \b V'\times L^2_0(\Omega)$$
$$F(t,\b u):=\left(F_1(t,\b u),F_2(t,\b u)\right)\in\b V'\times L^2_0(\Omega)$$

\noindent where we define
	\begin{align*}
		F_1\left(t,\b u\right)\left(\b z\right)&:=\int_\Omega\alpha_\epsilon\left(\varphi_\epsilon\right)\b u\cdot\left(\det\der T_t^{-1}\der T_t\b z\right)\det\der T_t+\\
		&+\int_\Omega\mu\der T_t^{-T}\nabla\b u:\der T_t^{-T}\nabla\left(\det\der T_t^{-1}\der T_t\b z\right)\det\der T_t\dx+\\
		&+\int_\Omega\b u\cdot\der T_t^{-T}\nabla\b u\cdot\left(\det \der T_t^{-1}\der T_t\b z\right)\det\der T_t\dx-\\
		&-\int_\Omega\b f\circ T_t\cdot\left(\det\der T_t^{-1}\der T_t\b z\right)\det\der T_t\dx
	\end{align*}
and
	$$F_2(t,\b u)=\left(\der T_t^{-1}:\nabla\b u\right)\det\der T_t.$$

	We observe that
	$$F\left(t,\b u_\epsilon(t)\circ T_t\right)=0.$$

	Besides we find that $\der_uF\left(0,\b u_\epsilon\right)$ is for all $\b u\in\b H^1_0(\Omega)$ given by
	$$\der_uF_1\left(0,\b u_\epsilon\right)\left(\b u\right)\left(\b z\right)=\int_\Omega\alpha_\epsilon\left(\varphi_\epsilon\right)\b u\cdot\b z+\mu\nabla\b u\cdot\nabla\b z+\b u_\epsilon\cdot\nabla\b u\cdot\b z+\b u\cdot\nabla\b u_\epsilon\cdot\b z\dx\quad\forall\b z\in\b V$$
	and
	$$\der_uF_2(0,\b u_\epsilon)\b u=\div\b u.$$
	Thus we can use the solvability result for the divergence operator \cite[Lemma II.2.1.1]{sohr} and $\eqref{e:StatNSOptCondSmallnessUEpsilon}$ to obtain from Lax-Milgram's theorem that $\der_uF\left(0,\b u\right):\b H^1_0(\Omega)\to\b V'\times L^2_0(\Omega)$ is an isomorphism. As a consequence, we can apply the implicit function theorem to 
	$$G:I\times\b H^1_0(\Omega)\to\b V'\times L^2_0(\Omega),\quad G\left(t,\b v\right):=F\left(t,\b v+\b G\right),$$
	which fulfills
	$$G\left(t,\b u_\epsilon(t)\circ T_t-\b G\right)=0\quad\forall t\in I$$
	for some fixed chosen $\b G\in\b H^1(\Omega)$ such that $\b G|_{\partial\Omega}=\b g$. From this we obtain existence and uniqueness of a function $t\mapsto \b u(t)$ such that $G(t,\b u(t))=0$ for all $t\in I$ in a small interval $I$ around zero. But since $G(t,\b w_\epsilon(t)\circ T_t-\b G)=0$ for all $t\in I$ and for all $\b w_\epsilon(t)\in\b S_\epsilon\left(\varphi_\epsilon(t)\right)$, this yields already $\b u(t)=\b u_\epsilon(t)\circ T_t-\b G=\b w_\epsilon(t)\circ T_t-\b G$ for all $\b w_\epsilon(t)\in\b S_\epsilon\left(\varphi_\epsilon(t)\right)$ and thus $\b S_\epsilon\left(\varphi_\epsilon(t)\right)=\left\{\b u_\epsilon(t)\right\}$ and the first statement of the lemma follows.\\
	The implicit function theorem gives more in this setting, namely the differentiability of $t\mapsto\left(\b u_\epsilon(t)\circ T_t-\b G\right)\in\b H^1(\Omega)$ at $t=0$ and thus of $t\mapsto\left(\b u_\epsilon(t)\circ T_t\right)$ as a mapping from $I$ to $\b H^1(\Omega)$ at $t=0$ together with
	\begin{equation*}\begin{split}
 \partial_t|_{t=0}\left(\b u_\epsilon(t)\circ T_t\right)&=\partial_t|_{t=0}\left(\b u_\epsilon(t)\circ T_t-\b G\right)=-\der_uG\left(0,\b u_\epsilon-\b G\right)^{-1}\partial_t G\left(0,\b u_\epsilon-\b G\right)=\\
 &=-\der_uF\left(0,\b u_\epsilon\right)^{-1}\partial_t F\left(0,\b u_\epsilon\right)\end{split}\end{equation*}
 wherefrom we deduce the statement.	For details we refer to \cite{GarckeHechtStokes, hecht}.
\end{proof}

Using this result, we can now proceed to deriving first order optimality conditions by using the reduced functional 
$$j_\epsilon(\varphi_\epsilon(t)):=J_\epsilon(\varphi_\epsilon(t),\b S_\epsilon(\varphi_\epsilon(t)))$$
which is due to Lemma~\ref{l:StatNSGenFctDiffVarParUDotpDotExist} for $t$ small enough well-defined.\\

\begin{thm}
\label{l:StatNSDiffParVarConvOptCondDistProb}
For any minimizer $\left(\varphi_\epsilon,\b u_\epsilon\right)\in \Phi_{ad}\times\b U$ of $\eqref{e:StatNSGenMinProblemFctl}-\eqref{e:StatNSGenConstraintsWeak}$ fulfilling $\eqref{e:StatNSOptCondSmallnessUEpsilon}$ there exists some Lagrange multiplier $\lambda_\epsilon\geq0$ for the integral constraint such that the following necessary optimality conditions hold true:
\begin{align}\label{e:StatNSDiffParVarConvOptCondLimitDist}\partial_t|_{t=0}j_\epsilon\left(\varphi_\epsilon\circ T_t^{-1}\right)=-\lambda_\epsilon\int_\Omega\varphi_\epsilon\div V(0)\dx,\quad \lambda_\epsilon\left(\int_\Omega\varphi_\epsilon\dx-\beta\left|\Omega\right|\right)=0
\end{align}
for all $T\in{\mathcal T}_{ad}$ with velocity $V\in{\mathcal V}_{ad}$, where this derivative is given by the following formula:
	\begin{equation}\begin{split}\label{e:StatNSGenFctDiffVarParResVariationEpsilon}
		&\partial_t|_{t=0}j_\epsilon\left(\varphi_\epsilon\circ T_t^{-1}\right)=\int_\Omega\alpha_\epsilon\left(\varphi_\epsilon\right)\left(\b u_\epsilon\cdot\dot{\b u}_\epsilon\left[V\right]+\frac12\left|\b u_\epsilon\right|^2\div V(0)\right)\dx+\\
		&+\int_\Omega\left[\der f\left(x,\b u_\epsilon,\der\b u_\epsilon\right)\left(V(0),\dot{\b u}_\epsilon\left[V\right],\der\dot{\b u}_\epsilon\left[V\right]-\der\b u_\epsilon\der V(0)\right)+\right.\\
		&+\left.f\left(x,\b u_\epsilon,\der\b u_\epsilon\right)\div V(0)\right]\dx+\\
		&+\int_\Omega\left(\frac{\gamma\epsilon}{2}\left|\nabla\varphi_\epsilon\right|^2+\frac\gamma\epsilon\psi\left(\varphi_\epsilon\right)\right)\div V(0)-\gamma\epsilon\nabla\varphi_\epsilon\cdot\nabla V(0)\nabla\varphi_\epsilon\dx
	\end{split}\end{equation}

	and $\dot{\b u}_\epsilon\left[V\right]\in\b H^1_0(\Omega)$ is given as the solution of $\eqref{e:StatNSGenDiffVarParDotUDeltaEpsilonEquation}$-$\eqref{e:StatNSDiffParlVarDivDotBUEpsilonDelta}$.
\end{thm}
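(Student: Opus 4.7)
The plan is to invoke Lemma~\ref{l:StatNSGenFctDiffVarParUDotpDotExist}, which ensures that for small $|t|$ the reduced functional $j_\epsilon(\varphi_\epsilon(t)) = J_\epsilon(\varphi_\epsilon(t), \b u_\epsilon(t))$ is well-defined and that $t \mapsto \b u_\epsilon(t)\circ T_t$ is differentiable at $0$ with derivative $\dot{\b u}_\epsilon[V]$ characterized by $\eqref{e:StatNSGenDiffVarParDotUDeltaEpsilonEquation}$--$\eqref{e:StatNSDiffParlVarDivDotBUEpsilonDelta}$. Since $T_t(\Omega) = \Omega$ by the tangency condition (V2), I would pull each of the three pieces of $J_\epsilon$ back to the reference domain via $y = T_t^{-1}(x)$, producing Jacobian factors $\det\der T_t$ together with $\der T_t^{-T}\nabla\varphi_\epsilon$ in the gradient term (using $\varphi_\epsilon(t)\circ T_t = \varphi_\epsilon$) and $\der(\b u_\epsilon(t)\circ T_t)\,\der T_t^{-1}$ in place of $\der\b u_\epsilon(t)\circ T_t$ in the objective.

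The derivative at $t=0$ is then computed termwise using the standard identities $\partial_t|_{t=0}\det\der T_t = \div V(0)$, $\partial_t|_{t=0}\der T_t^{-1} = -\der V(0)$ and $\partial_t|_{t=0}T_t = V(0)$, combined with the differentiability of $t\mapsto\b u_\epsilon(t)\circ T_t$ and the Fr\'echet differentiability of the Carath\'eodory integrand granted by (A5). The hypothesis $\b f\in\b H^1(\Omega)$ permits differentiating $\b f\circ T_t$ along $V(0)$ and produces precisely the terms $\int_\Omega(\nabla\b f\cdot V(0))\cdot\b z\dx$ and $\int_\Omega\b f\cdot\der V(0)\b z\dx$ that appear on the right-hand side of $\eqref{e:StatNSGenDiffVarParDotUDeltaEpsilonEquation}$. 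Collecting all contributions and simplifying yields formula $\eqref{e:StatNSGenFctDiffVarParResVariationEpsilon}$.

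To extract the Lagrange multiplier I would distinguish two cases. If $\int_\Omega\varphi_\epsilon\dx < \beta|\Omega|$, then for every $V\in\mathcal V_{ad}$ and $|t|$ sufficiently small the transported function $\varphi_\epsilon(t) = \varphi_\epsilon\circ T_t^{-1}$ still lies in $\Phi_{ad}$: the pointwise bound $|\varphi_\epsilon(t)|\leq 1$ is automatic since $T_t$ is a bijection on $\Omega$, $H^1$-regularity is preserved by composition with the bi-Lipschitz $T_t^{-1}$, and the volume bound continues to hold by continuity. Minimality then forces $\partial_t|_{t=0}j_\epsilon(\varphi_\epsilon\circ T_t^{-1}) = 0$ for every admissible $V$, so $\lambda_\epsilon = 0$ satisfies the conclusion. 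If the integral constraint is active, one observes $\partial_t|_{t=0}\int_\Omega\varphi_\epsilon(t)\dx = \int_\Omega\varphi_\epsilon\div V(0)\dx$, and since $\varphi_\epsilon \not\equiv \mathrm{const}$ there exists $V_0\in\mathcal V_{ad}$ with $\int_\Omega\varphi_\epsilon\div V_0(0)\dx \neq 0$; a standard compensation argument (subtracting a multiple of $V_0$ from an arbitrary $V$ to preserve the constraint to first order) produces a unique scalar $\lambda_\epsilon$ and forces $\lambda_\epsilon\geq 0$ from the one-sided nature of the volume inequality.

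The main technical obstacle is the bookkeeping for the objective-integrand term: differentiating $\der(\b u_\epsilon(t)\circ T_t)\,\der T_t^{-1}$ at $t=0$ produces both $\der\dot{\b u}_\epsilon[V]$ and $-\der\b u_\epsilon\,\der V(0)$, and one must justify that these weak-$\b H^1$ differentiation steps can be passed through $\der_{(2,3)}f$; this is handled by the growth bound $\eqref{e:ObjFctLCaratheodory2}$, which implies continuity of the associated Nemytskii operator. A secondary subtlety is ensuring that the smallness condition $\eqref{e:StatNSOptCondSmallnessUEpsilon}$ persists for $\b u_\epsilon(t)$ with $|t|$ small, so that the implicit function theorem of Lemma~\ref{l:StatNSGenFctDiffVarParUDotpDotExist} indeed delivers uniqueness of the transported state; this follows from continuity of $t\mapsto\b u_\epsilon(t)\circ T_t$ in $\b H^1(\Omega)$ at $t=0$ together with the strict inequality in $\eqref{e:StatNSOptCondSmallnessUEpsilon}$.
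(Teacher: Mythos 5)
Your proposal is correct and follows essentially the same route as the paper, whose proof simply carries out these pull-back/transport computations ``exactly as in [GarckeHechtStokes, Theorem 3]'' on the basis of Lemma~\ref{l:StatNSGenFctDiffVarParUDotpDotExist}: transform $J_\epsilon$ back to the reference domain, differentiate the Jacobian factors at $t=0$, and obtain $\lambda_\epsilon\geq0$ from one-sided admissible variations plus the compensation argument. The only point to patch is the active-constraint case when $\varphi_\epsilon$ is constant (then your $V_0$ with $\int_\Omega\varphi_\epsilon\div V_0(0)\dx\neq0$ need not exist); but in that case $\int_\Omega\varphi_\epsilon\circ T_t^{-1}\dx=\varphi_\epsilon\left|\Omega\right|$ for every admissible $T_t$, so the volume constraint is preserved exactly, two-sided variations are admissible, and the conclusion holds with $\lambda_\epsilon=0$.
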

\begin{proof}
Those calculations can be carried out exactly as in \cite[Theorem 3]{GarckeHechtStokes}, where also the existence of a Lagrange multiplier  is shown.
\end{proof}

\begin{remark}\label{r:ParametricVariations}
	One can also consider the phase field problem $\eqref{e:StatNSGenMinProblemFctl}-\eqref{e:StatNSGenConstraintsWeak}$ as an optimal control problem and then derive a variational inequality by parametric variations as in standard optimal control problems, see for instance \cite{troeltzsch}. This optimality condition is then given by
	\begin{align}\label{e:VarINequlaity}\der j_\epsilon\left(\varphi_\epsilon\right)\left(\varphi-\varphi_\epsilon\right)+\lambda_\epsilon\int_\Omega\left(\varphi-\varphi_\epsilon\right)\dx\geq 0\quad\forall\varphi\in H^1(\Omega), |\varphi|\leq 1\text{ a.e. in }\Omega.\end{align}
	This criteria can also be rewritten in a more convenient adjoint formulation, compare \cite[Section 15.1]{hecht}. This approach has already been used for numerical simulations, see \cite{garckehinzeetal}, which validate the reliability of this phase field model.\\
	Assuming more regularity on $\Omega$, the boundary data $\b g$ and the objective functional one can then show, that the optimality conditions derived in Theorem~\ref{l:StatNSDiffParVarConvOptCondDistProb} are necessary for the variational inequality. To be precise, if the variational inequality is fulfilled, also $\eqref{e:StatNSDiffParVarConvOptCondLimitDist}$ is fulfilled. Roughly speaking, one can insert $\varphi\equiv \varphi_\epsilon\circ T_{-t}$ into $\eqref{e:VarINequlaity}$, divide by $t$, and use some rearrangements. For details, we refer to \cite[Section 15.3]{hecht}.	
	
\end{remark}

\section{The sharp interface problem}\label{s:SharpINterfaceProblem}
In Section~\ref{s:SharpLimit} we will consider the limit $\epsilon\searrow0$, the so-called sharp interface limit. Hence we want to send both the interface thickness and the permeability of the medium outside the fluid to zero in order to arrive in a sharp interface problem whose solutions can be considered as black-and-white solutions. This means that only pure fluid and pure non-fluid phases exist, and the permeability of the material outside the fluid is zero. In this section, we introduce and investigate the sharp interface problem that will correspond to the phase field model as $\epsilon$ tends to zero. This problem describes a general sharp interface shape and topology optimization problem in a stationary Navier-Stokes flow and is a nonlinear version of the problem description in a Stokes flow, compare \cite{GarckeHechtStokes}.

\subsection{Problem formulation}\label{s:SharpFormulation}

We start with a brief introduction in the notation of Caccioppoli sets and functions of bounded variations, but for a detailed introduction we refer to \cite{ambrosio,evans_gariepy}. We call a function $\varphi\in L^1(\Omega)$ a function of bounded variation if its distributional derivative is a vector-valued finite Radon measure. The space of functions of bounded variation in $\Omega$ is denoted by $BV(\Omega)$, and by $BV(\Omega,\{\pm1\})$ we denote functions in $BV(\Omega)$ having only the values $\pm1$ a.e. in $\Omega$. We then call a measurable set $E\subset\Omega$ Caccioppoli set if $\chi_E\in BV(\Omega)$. For any Caccioppoli set $E$, one can hence define the total variation $\left|\der\chi_E\right|(\Omega)$ of $\der\chi_E$, as $\der\chi_E$ is a finite measure. This value is then called the perimeter of $E$ in $\Omega$ and is denoted by $P_\Omega\left(E\right):=\left|\der\chi_E\right|(\Omega)$.\\

In the sharp interface problem we still define the velocity of the fluid on the whole of $\Omega$, even though there is only a part of it filled with fluid. This is realized by defining the velocity to be zero in the non-fluid region. Hence, the velocity corresponding to some design variable $\varphi\in L^1(\Omega)$ is to be chosen in the space $\b U^\varphi:=\{\b u\in\b U\mid\b u|_{\{\varphi=-1\}}=\b 0\text{ a.e. in }\Omega\}$, where we recall that the fluid regions is given by $\{\varphi=1\}$ and the non-fluid region by $\{\varphi=-1\}$. Correspondingly we define $\b V^\varphi:=\{\b u\in\b V\mid\b u|_{\{\varphi=-1\}}=\b 0\text{ a.e. in }\Omega\}$. Apparently, the space $\b U^\varphi$ may be empty since the conditions $\b u|_{\{\varphi=-1\}}=\b 0$ and $\b u|_{\partial\Omega}=\b g$ may be inconsistent with one another. As a consequence, we can only expect to find a solution of the state system if at least the solution space $\b U^\varphi$ is not empty. The design space for the sharp interface problem is given as
$$\Phi_{ad}^0:=\left\{\varphi\in BV\left(\Omega,\left\{\pm1\right\}\right)\mid\int_\Omega\varphi\dx\leq\beta\left|\Omega\right|,\,\b U^\varphi\neq\emptyset\right\}$$
and the enlarged admissible set is denoted by
$$\overline\Phi_{ad}^0:=\left\{\varphi\in BV\left(\Omega,\left\{\pm1\right\}\right)\mid\,\b U^\varphi\neq\emptyset\right\}.$$

We can then write the the sharp interface problem as

\begin{equation}\begin{split}\label{e:StokesGenMinProblemFctlSharp}\min_{\left(\varphi,\b u\right)}J_0\left(\varphi,\b u\right)&:=\int_\Omega f\left(x,\b u,\der\b u\right)\dx+\gamma c_0P_\Omega\left(\{\varphi=1\}\right)\end{split}\end{equation}
subject to $\left(\varphi,\b u\right)\in \Phi_{ad}^0\times\b U^\varphi$ and
\begin{align}\label{e:StatNSSharpConstraintsWeak}\mu\int_\Omega\nabla\b u\cdot\nabla\b v\dx+b\left(\b u,\b u,\b v\right)=\int_\Omega\b f\cdot\b v\dx\quad\forall\b v\in\b V^\varphi.\end{align}
Here, $c_0:=\int_{-1}^1\sqrt{2\psi(s)}\,\mathrm ds=\frac\pi2$ is a constant appearing due to technical reasons in the limit $\epsilon\searrow0$, compare Section~\ref{s:ConvMinim}. Recall, that $\gamma>0$ was an arbitrary weighting parameter for the perimeter penalization.

\subsection{Existence results}
Let us start by considering the state equations. Due to the nonlinearity in the equation we have to deal additionally with some technical difficulties. So we can only show the existence of a solution to $\eqref{e:StatNSSharpConstraintsWeak}$ for $\varphi\in\overline\Phi_{ad}^0$ fulfilling an additional assumption.

\begin{lem}\label{l:StatNSS0WellDefined} Let $\varphi\in L^1(\Omega)$ be such that there exists some $\b w\in\b U^\varphi$ and some $c>0$, $c<\mu$, with
\begin{align}\label{e:StatNSSOWelLDefSmallCondNec}\left|\int_\Omega\b v\cdot\nabla\b w\cdot\b v\dx\right|\leq c\left\|\nabla\b v\right\|_{\b L^2(\Omega)}^2\quad\forall\b v\in\b V^\varphi.\end{align}
Then there exists some $\b u\in\b U^\varphi$ fulfilling $\eqref{e:StatNSSharpConstraintsWeak}$. This defines a set-valued solution operator denoted by
 $$\b S_0(\varphi):=\left\{\b u\in\b U^\varphi\mid \eqref{e:StatNSSharpConstraintsWeak}\text{ is fulfilled for }\b u\right\}\quad\forall\varphi\in\overline \Phi_{ad}^0$$
 which may be empty if there is no $\b u\in\b U^\varphi$ such that $\eqref{e:StatNSSOWelLDefSmallCondNec}$ is fulfilled.
\end{lem}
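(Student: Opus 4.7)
The plan is to mimic the argument of Lemma~\ref{l:StatNSExistSolOperatorSEpsilon} on the homogeneous problem, but with two modifications dictated by the sharp interface setting: the lift of the boundary data must live in $\b U^\varphi$ (not merely in $\b U$, as the solenoidal extension produced by Lemma~\ref{l:StatNSExistSolVF} would not respect the constraint on $\{\varphi=-1\}$), and the smallness estimate $\eqref{e:DiffuseExistenceSolProofPsiInequalityProperty}$ will be replaced by the hypothesis $\eqref{e:StatNSSOWelLDefSmallCondNec}$. This is exactly the reason why existence of such a $\b w$ with $c<\mu$ is imposed as an assumption rather than derived.

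First I would set $\widehat{\b u}:=\b u-\b w\in\b V^\varphi$ and rewrite $\eqref{e:StatNSSharpConstraintsWeak}$ as the homogeneous problem
\begin{align*}
\mu\int_\Omega\nabla\widehat{\b u}\cdot\nabla\b v\dx+b(\widehat{\b u},\widehat{\b u},\b v)+b(\widehat{\b u},\b w,\b v)+b(\b w,\widehat{\b u},\b v)=\langle\widehat{\b f},\b v\rangle\quad\forall\b v\in\b V^\varphi,
\end{align*}
where $\widehat{\b f}:=\b f+\mu\Delta\b w-\b w\cdot\nabla\b w\in(\b V^\varphi)'$. Note that $\b V^\varphi$ is a closed subspace of $\b V$ (since the pointwise a.e.\ vanishing constraint on $\{\varphi=-1\}$ is closed in $\b L^2$), hence a Hilbert space, and that the right-hand side is continuous on it because $\b w\in\b H^1(\Omega)$ and $\b f\in\b L^2(\Omega)$.

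Next, define $A\colon\b V^\varphi\to(\b V^\varphi)'$ and $B\colon\b V^\varphi\to(\b V^\varphi)'$ by
\begin{align*}
A(\b v)(\b z)&:=\mu\int_\Omega\nabla\b v\cdot\nabla\b z\dx+b(\b v,\b w,\b z)+b(\b w,\b v,\b z),\\
B(\b v)(\b z)&:=b(\b v,\b v,\b z).
\end{align*}
The crucial check is monotonicity of $A$: for $\b v_1,\b v_2\in\b V^\varphi$ we have $\b r:=\b v_1-\b v_2\in\b V^\varphi\subset\b H^1_0(\Omega)$, and since $\b w$ is divergence-free, $\eqref{e:TrilinearformLastTwoEqualZero}$ yields $b(\b w,\b r,\b r)=0$, while $\eqref{e:StatNSSOWelLDefSmallCondNec}$ gives $|b(\b r,\b w,\b r)|\leq c\|\nabla\b r\|_{\b L^2(\Omega)}^2$, so
\begin{align*}
\langle A\b v_1-A\b v_2,\b r\rangle\geq(\mu-c)\|\nabla\b r\|_{\b L^2(\Omega)}^2\geq 0,
\end{align*}
using $c<\mu$. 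In particular $A$ is linear, bounded and monotone, hence pseudo-monotone; by Lemma~\ref{l:TrilinearFormStrongCont} $B$ is strongly continuous and bounded, hence $A+B$ is pseudo-monotone and bounded. Coercivity follows from the same calculation applied to $\b v_2=\b 0$ together with $B(\b v)(\b v)=0$ (which holds by $\eqref{e:TrilinearformLastTwoEqualZero}$ since any $\b v\in\b V^\varphi\subset\b V$ is in $\b H^1_0$).

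The main theorem on pseudo-monotone operators (e.g.\ \cite[27.3]{zeidler2b}) then produces $\widehat{\b u}\in\b V^\varphi$ solving the homogeneous equation, and $\b u:=\widehat{\b u}+\b w\in\b U^\varphi$ is the desired solution of $\eqref{e:StatNSSharpConstraintsWeak}$. The only real obstacle compared to Lemma~\ref{l:StatNSExistSolOperatorSEpsilon} is the replacement of the freely constructible lift $\b\psi$ (for which $\eqref{e:DiffuseExistenceSolProofPsiInequalityProperty}$ can always be arranged via Lemma~\ref{l:StatNSExistSolVF}) by a given $\b w\in\b U^\varphi$; the hypothesis $c<\mu$ is precisely what is needed to preserve monotonicity, and this explains why the solution operator $\b S_0$ may now be empty when no such $\b w$ exists.
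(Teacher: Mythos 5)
Your proposal is correct and follows essentially the same route as the paper: the paper's proof also takes the given $\b w\in\b U^\varphi$ with estimate $\eqref{e:StatNSSOWelLDefSmallCondNec}$ in place of the constructed lift $\b\psi$ and then proceeds exactly as in Lemma~\ref{l:StatNSExistSolOperatorSEpsilon} via the main theorem on pseudo-monotone operators (referring to the thesis for details). Your write-up simply makes explicit the homogeneous reduction on $\b V^\varphi$ and the monotonicity/coercivity checks that the paper leaves implicit.
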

\begin{remark}\label{r:StatNSS0WellDefinedMaybe}
	We point out that $\eqref{e:StatNSSOWelLDefSmallCondNec}$ is sufficient but not necessary for the existence of a solution to $\eqref{e:StatNSSharpConstraintsWeak}$, so $\b S_0(\varphi)$ may be non-empty for $\varphi\in\overline\Phi_{ad}^0$ even if $\eqref{e:StatNSSOWelLDefSmallCondNec}$ is not fulfilled.
\end{remark}

\begin{proof}

We fix some arbitrary $\varphi\in L^1(\Omega)$ with $\b U^\varphi\neq\emptyset$ and choose $\b w\in\b U^\varphi$ due to $\eqref{e:StatNSSOWelLDefSmallCondNec}$ which gives in particular a constant $0<c<\mu$ with
\begin{align}\label{e:TrilinearExistSharpCrucialWidehatEstimate}b\left(\b v,\b w,\b v\right)\leq c\left\|\nabla\b v\right\|_{\b L^2(\Omega)}^2\quad\forall\b v\in\b V^\varphi.\end{align}
Using this estimate, we can now proceed analogously to the proof of Lemma~\ref{l:StatNSExistSolOperatorSEpsilon} and use the main theorem on pseudo-monotone operators to deduce the statement. Some more details can be found in \cite[Lemma 13.1]{hecht}.
\end{proof}

Similar to the phase field setting we don't have a unique solution of the state equation~$\eqref{e:StatNSSharpConstraintsWeak}$. But under an additional constraint, which will be fulfilled for minimizers of our overall optimization problem, see Lemma~\ref{l:PropMinimizerJ0N}, we can deduce uniqueness, as the following lemma shows:

\begin{lem}\label{l:SharpUniqueStateEquiForSmallU}
	Assume that there exists a solution $\b u\in\b U^\varphi$ of $\eqref{e:StatNSSharpConstraintsWeak}$ such that it holds
	\begin{align}\label{e:NecessarySmallnessUForSharpUnique}\left\|\nabla\b u\right\|_{\b L^2(\Omega)}<\frac{\mu}{K_\Omega}.\end{align}
	Then this is the only solution of $\eqref{e:StatNSSharpConstraintsWeak}$.
\end{lem}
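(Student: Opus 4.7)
The plan is to mimic the proof of Lemma~\ref{l:EpsilonUniqueStateEquiForSmallU} almost verbatim, with the only substantive difference being that the $\alpha_\epsilon$-mass term disappears and that I have to verify the difference of two solutions lies in the right test space $\b V^\varphi$. Concretely, I fix a second solution $\widehat{\b u}\in\b U^\varphi$ of $\eqref{e:StatNSSharpConstraintsWeak}$ and set $\b z:=\widehat{\b u}-\b u$. Since both $\b u,\widehat{\b u}\in\b U^\varphi$, they share the boundary datum $\b g$ and both vanish on $\{\varphi=-1\}$, so $\b z\in\b H^1_0(\Omega)$ is solenoidal and vanishes on $\{\varphi=-1\}$, i.e.\ $\b z\in\b V^\varphi$, making it an admissible test function in $\eqref{e:StatNSSharpConstraintsWeak}$.

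Next I subtract the two weak formulations. Using the trilinearity of $b$ to expand $b(\widehat{\b u},\widehat{\b u},\b v)-b(\b u,\b u,\b v)=b(\b z,\b z,\b v)+b(\b z,\b u,\b v)+b(\b u,\b z,\b v)$, I obtain
\begin{equation*}
\mu\int_\Omega\nabla\b z\cdot\nabla\b v\dx+b(\b z,\b z,\b v)+b(\b z,\b u,\b v)+b(\b u,\b z,\b v)=0\qquad\forall\b v\in\b V^\varphi.
\end{equation*}
Testing with $\b v=\b z\in\b V^\varphi$ kills $b(\b z,\b z,\b z)$ by $\eqref{e:TrilinearformLastTwoEqualZero}$, and since $\b u$ is solenoidal the same identity gives $b(\b u,\b z,\b z)=0$. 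What survives is $\mu\|\nabla\b z\|_{\b L^2(\Omega)}^2+b(\b z,\b u,\b z)=0$.

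Then I apply the continuity estimate $\eqref{e:ContinuityEstimateTrilinearForm}$ to $b(\b z,\b u,\b z)$ to bound $|b(\b z,\b u,\b z)|\leq K_\Omega\|\nabla\b u\|_{\b L^2(\Omega)}\|\nabla\b z\|_{\b L^2(\Omega)}^2$, yielding
\begin{equation*}
\bigl(\mu-K_\Omega\|\nabla\b u\|_{\b L^2(\Omega)}\bigr)\|\nabla\b z\|_{\b L^2(\Omega)}^2\leq 0.
\end{equation*}
By the smallness hypothesis $\eqref{e:NecessarySmallnessUForSharpUnique}$ the prefactor is strictly positive, forcing $\nabla\b z\equiv\b0$, and Poincar\'e's inequality (applicable because $\b z\in\b H^1_0(\Omega)$) then gives $\b z\equiv\b 0$, i.e.\ $\widehat{\b u}=\b u$.

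I expect no serious obstacle here: the only conceptual point to check is that $\b z\in\b V^\varphi$, which is immediate from the definition of $\b U^\varphi$. The absence of the $\alpha_\epsilon$ term actually simplifies the argument compared with Lemma~\ref{l:EpsilonUniqueStateEquiForSmallU}, because I do not even need to throw away a nonnegative mass contribution; everything goes through identically otherwise.
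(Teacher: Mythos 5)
Your argument is correct and is exactly what the paper intends: its proof of this lemma simply says ``follows as in Lemma~\ref{l:EpsilonUniqueStateEquiForSmallU}'', and you have reproduced that argument with the only necessary adjustments, namely dropping the $\alpha_\epsilon$ term and checking that $\b z=\widehat{\b u}-\b u$ lies in the test space $\b V^\varphi$. No gaps.
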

\begin{proof}
	 Follows as in Lemma~\ref{l:EpsilonUniqueStateEquiForSmallU}.
\end{proof}

\begin{remark}\label{r:ExistMinj0}
The existence of a minimizer for the shape optimization problem $\eqref{e:StokesGenMinProblemFctlSharp}-\eqref{e:StatNSSharpConstraintsWeak}$ may not be guaranteed in general. There are several counterexamples concerning existence of such a problem where the Laplace equation is used as a state constraint, see for instance \cite{buttazzorelaxed, dalmasomosco} and included references. But we will obtain as a consequence from our sharp interface considerations in Section~\ref{s:ConvMinim} and the fact that the porous medium -- phase field problem introduced in the previous section always admits a minimizer for each $\epsilon>0$, that under suitable assumptions also the sharp interface problem $\eqref{e:StokesGenMinProblemFctlSharp}-\eqref{e:StatNSSharpConstraintsWeak}$ has a minimizer. 
\end{remark}

\subsection{Optimality conditions}\label{s:SharpOptCond}

For this section we assume that $\left(\varphi_0,\b u_0\right)\in L^1(\Omega)\times\b H^1(\Omega)$ is a minimizer of $\eqref{e:StokesGenMinProblemFctlSharp}-\eqref{e:StatNSSharpConstraintsWeak}$ fulfilling additionally
\begin{align}\label{e:StatNSSharpParSMallnessU0Recall}
	\left\|\nabla\b u_0\right\|_{\b L^2(\Omega)}\leq\frac{\mu}{2K_\Omega}
\end{align}
and thus by Lemma~\ref{l:SharpUniqueStateEquiForSmallU} in particular $\left\{\b u_0\right\}=\b S_0(\varphi_0)$.

\begin{remark}
We will state in the next section suitable assumptions on the problem such that $\eqref{e:StatNSSharpParSMallnessU0Recall}$ is fulfilled for any minimizer $\left(\varphi_0,\b u_0\right)$ of the sharp interface problem $\eqref{e:StokesGenMinProblemFctlSharp}-\eqref{e:StatNSSharpConstraintsWeak}$, see Lemma \ref{l:PropMinimizerJ0N}. The existence of a minimizer for $\eqref{e:StokesGenMinProblemFctlSharp}-\eqref{e:StatNSSharpConstraintsWeak}$is for example guaranteed in the setting of Theorem~\ref{t:StatNSSharpConvergence}.
\end{remark}

The aim of this section is to derive first order optimality conditions for $\eqref{e:StokesGenMinProblemFctlSharp}-\eqref{e:StatNSSharpConstraintsWeak}$, thus necessary conditions that have to be fulfilled for the minimizer $\left(\varphi_0,\b u_0\right)$. Therefore we will use as in Section \ref{s:PhaseFieldOptCond} geometric variations. We point out that we do not assume any additional regularity on the minimizer. This means that our minimizing set will in general only be a Caccioppoli set. Calculating first order optimality conditions in form of geometric variations in such a general setting is to our knowledge a new result in literature.\\

We have to assume for the remainder of this section Assumption~\ref{a:Diff} to ensure differentiability of the objective functional and the external force term.

For this purpose, we fix for the rest of this subsection
$E_0:=\left\{x\in\Omega\mid\varphi_0(x)=1\right\}$. We define
$$\varphi_0(t)=\varphi_0\circ T_t^{-1},\quad \Omega_t=T_t(\Omega),\quad E_t=T_t(E_0)$$

\noindent for some given transformation $T\in{\mathcal T}_{ad}$ and see that $\varphi_0(t)\in\overline \Phi_{ad}^0$, since  the function $\left(\det\der T_t^{-1}\right)\left(\der T_t\right)\b u_0\circ T_t^{-1}\in\b U^{\varphi_0(t)}$ and so $\b U^{\varphi_0(t)}\neq\emptyset$, see also \cite[Lemma 5]{GarckeHechtStokes}.\\

We can a priori neither guarantee the existence of a solution to the state equations $\eqref{e:StatNSSharpConstraintsWeak}$ corresponding to $E_t$, nor uniqueness, even though this holds true for $E_0$. And so we start with showing the existence of a solution to the state equations corresponding to $E_t$ if $t$ is small enough:

\begin{lem}\label{l:StatNSSharpParVarExistSol}
	There exists a small interval $I\subset\R$, $0\in I$, such that there exists some $\b u_t\in\b S_0\left(\varphi_0\circ T_t^{-1}\right)$ for all $t\in I$. Moreover, there exists a constant $C>0$ independent of $t\in I$ such that it holds
	\begin{align}\label{e:StatNSSHarParVarUniformEstiamteInT}\left\|\nabla\b u_t\right\|_{\b L^2(\Omega)}\leq C.\end{align}
\end{lem}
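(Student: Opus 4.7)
The plan is to construct, for each small $t$, an element $\b w_t\in\b U^{\varphi_0(t)}$ that stays close to $\b u_0$ in $\b H^1(\Omega)$, verify the smallness hypothesis $\eqref{e:StatNSSOWelLDefSmallCondNec}$ so that Lemma~\ref{l:StatNSS0WellDefined} produces a solution $\b u_t$, and then derive the uniform bound by a standard energy argument that again exploits the same smallness.

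For the construction I would use the Piola push-forward
$$\b w_t(y):=\det\bigl(\der T_t^{-1}(y)\bigr)\,\der T_t\bigl(T_t^{-1}(y)\bigr)\,\b u_0\bigl(T_t^{-1}(y)\bigr),$$
which preserves the solenoidal constraint (this is essentially the identity behind $F_2$ in the proof of Lemma~\ref{l:StatNSGenFctDiffVarParUDotpDotExist}). By (V3) the velocity field $V$ vanishes on $\{x\in\partial\Omega\mid\b g(x)\neq\b 0\}$, so the flow $T_t$ is the identity there and maps the complementary subset of $\partial\Omega$ into itself; combined with $\b u_0|_{\partial\Omega}=\b g$ this yields $\b w_t|_{\partial\Omega}=\b g$. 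By definition $T_t(\{\varphi_0=-1\})=\{\varphi_0(t)=-1\}$, so $\b u_0=\b 0$ on $\{\varphi_0=-1\}$ gives $\b w_t=\b 0$ on $\{\varphi_0(t)=-1\}$. Altogether $\b w_t\in\b U^{\varphi_0(t)}$, and in particular $\varphi_0(t)\in\overline\Phi_{ad}^0$. The regularity of $T$ recorded in Remark~\ref{r:NotAssPropertiesOfTransformations} implies $\b w_t\to\b u_0$ in $\b H^1(\Omega)$ as $t\to 0$, so by $\eqref{e:StatNSSharpParSMallnessU0Recall}$ and $\eqref{e:ContinuityEstimateTrilinearForm}$ one obtains for $|t|$ sufficiently small
$$|b(\b v,\b w_t,\b v)|\leq K_\Omega\|\nabla\b w_t\|_{\b L^2(\Omega)}\|\nabla\b v\|_{\b L^2(\Omega)}^2\leq\tfrac{3\mu}{4}\|\nabla\b v\|_{\b L^2(\Omega)}^2\quad\forall\b v\in\b V^{\varphi_0(t)},$$
and Lemma~\ref{l:StatNSS0WellDefined} then produces $\b u_t\in\b S_0(\varphi_0(t))$.

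For the uniform bound I would decompose $\b u_t=\b w_t+\b{\hat u}_t$ with $\b{\hat u}_t\in\b V^{\varphi_0(t)}$, insert $\b{\hat u}_t$ as a test function into $\eqref{e:StatNSSharpConstraintsWeak}$, and use $\eqref{e:TrilinearformLastTwoEqualZero}$ together with $\eqref{e:TrilinearformLastTwoSwitch}$ to reduce the convective contribution to $b(\b{\hat u}_t,\b w_t,\b{\hat u}_t)+b(\b w_t,\b w_t,\b{\hat u}_t)$. The first of these terms is absorbed into $\tfrac\mu4\|\nabla\b{\hat u}_t\|_{\b L^2(\Omega)}^2$ by the estimate just established, while the remaining linear and source terms are controlled by Cauchy--Schwarz, Young and Poincar\'e in terms of $\|\b f\|_{\b L^2(\Omega)}$ and $\sup_{t\in I}\|\b w_t\|_{\b H^1(\Omega)}<\infty$. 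This yields a $t$-independent bound on $\|\nabla\b{\hat u}_t\|_{\b L^2(\Omega)}$ and therefore on $\|\nabla\b u_t\|_{\b L^2(\Omega)}$. The main obstacle is the careful verification that the Piola push-forward actually lies in $\b U^{\varphi_0(t)}$ with the correct trace on $\partial\Omega$ and the right vanishing property on the non-fluid phase; once this book-keeping is in place, the rest is a variant of the arguments already used for Lemma~\ref{l:StatNSExistSolOperatorSEpsilon} and Lemma~\ref{l:StatNSS0WellDefined}.
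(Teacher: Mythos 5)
Your proposal is correct and follows essentially the same route as the paper: the paper likewise takes the divergence-free Piola transform $\b u(t)=\left(\det\der T_t^{-1}\right)\left(\der T_t\right)\b u_0\circ T_t^{-1}\in\b U^{\varphi_0(t)}$ as reference element, verifies the smallness hypothesis of Lemma~\ref{l:StatNSS0WellDefined} for $|t|\ll1$ (there via the change-of-variables bound $\left\|\b u(t)\right\|_{\b L^d(\Omega)}\leq(1+C|t|)\left\|\b u_0\right\|_{\b L^d(\Omega)}$ together with $\eqref{e:StatNSSharpParSMallnessU0Recall}$, rather than your $\b H^1$-continuity argument, which amounts to the same thing), and then derives the uniform bound by testing the state equation with $\b u_t-\b u(t)\in\b V^{\varphi_0(t)}$ exactly as you propose. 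The only step the paper treats more briefly is the membership $\b u(t)\in\b U^{\varphi_0(t)}$, which it delegates to the cited Lemma~5 of \cite{GarckeHechtStokes} instead of your direct boundary/non-fluid-region verification.
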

\begin{proof}
	We define $\b u(t):=\left(\det\der T_t^{-1}\right)(\der T_t)\b u_0\circ T_t^{-1}\in\b U^{\varphi_0(t)}$ and let $\b v\in\b V$ be arbitrary. Then we have, by following the arguments of \cite[Lemma IX.1.1]{galdi}, the estimate
	\begin{align}\label{e:StatNSSharpParVarExistSOlInNeighbofhoodBEst}b\left(\b v,\b u(t),\b v\right)=-b\left(\b v,\b v,\b u(t)\right)\leq\left\|\b v\right\|_{\b L^{2d/(d-2)}(\Omega)}\left\|\nabla\b v\right\|_{\b L^2(\Omega)}\left\|\b u(t)\right\|_{\b L^d(\Omega)}.\end{align}
	Using change of variables and $\|\der T_t\|_{\infty}=\sup_{x\in\overline\Omega}\|\der T_t(x)\|_\infty\leq 1+C|t|$ and $\|\det\der T_t\|_\infty\leq 1+C|t|$, which holds for $|t|\ll1$, we find 
	\begin{align}\label{e:StatNSSharpParVarExistSOlInNeighbofhoodUtEst}
		\left\|\b u(t)\right\|_{\b L^d(\Omega)}&\leq \left(1+C|t|\right)\|\b u_0\|_{\b L^d(\Omega)}.\end{align}
Combining $\eqref{e:StatNSSharpParVarExistSOlInNeighbofhoodBEst}$ and $\eqref{e:StatNSSharpParVarExistSOlInNeighbofhoodUtEst}$ we obtain by using again estimates as in \cite[Lemma IX.1.1]{galdi} that
\begin{equation}\begin{split}\label{e:StatNSParVarEstiamteWithConstanstSmall}\left|b\left(\b v,\b u(t),\b v\right)\right|&\leq\left\|\b v\right\|_{\b L^{2d/(d-2)}(\Omega)}\left\|\nabla\b v\right\|_{\b L^2(\Omega)}\left\|\b u_0\right\|_{\b L^d(\Omega)}\left(1+C|t|\right)\leq \\
&\leq K_\Omega\left\|\nabla\b v\right\|_{\b L^2(\Omega)}^2\|\nabla\b u_0\|_{\b L^2(\Omega)}\left(1+C\left|t\right|\right)\leq\frac\mu2\left\|\nabla\b v\right\|_{\b L^2(\Omega)}^2\left(1+C|t|\right)\end{split}\end{equation}
where in the last step we made in particular use of $\eqref{e:StatNSSharpParSMallnessU0Recall}$. We hence can deduce from $\eqref{e:StatNSParVarEstiamteWithConstanstSmall}$ the existence of some interval $0\in I\subset\R$ and some constant $c>0$ with $c<\mu$ such that 
\begin{equation}\begin{split}\label{e:StatNSParVarEstiamteWithConstanstSmall2}\left|b\left(\b v,\b u(t),\b v\right)\right|\leq c\left\|\nabla\b v\right\|_{\b L^2(\Omega)}^2\quad\forall\b v\in\b V, t\in I.\end{split}\end{equation}
As by construction $\b u(t)\in\b U^{\varphi_0(t)}$ we obtain from $\eqref{e:StatNSParVarEstiamteWithConstanstSmall2}$ and Lemma~\ref{l:StatNSS0WellDefined} the existence of some $\b u_t\in\b S_0(\varphi_0(t))$ for all $t\in I$.\\

To deduce the uniform estimate $\eqref{e:StatNSSHarParVarUniformEstiamteInT}$ on $\left(\b u_t\right)_{t\in I}$ we proceed similar as in \cite[Theorem IX.2.1]{galdi} to find that $\b w_t:=\b u_t-\b u(t)\in\b V^{\varphi_0(t)}$ fulfills
\begin{align*}
\mu\left\|\nabla\b w_t\right\|_{\b L^2(\Omega)}^2+b\left(\b w_t,\b u(t),\b w_t\right)=\int_\Omega\b f\cdot\b w_t-\mu\nabla\b u(t)\cdot\nabla\b w_t\dx-b\left(\b u(t),\b u(t),\b w_t\right)
\end{align*}
and so

\begin{align*}
\mu\left\|\nabla\b w_t\right\|_{\b L^2(\Omega)}^2&\leq\left|b\left(\b w_t,\b u(t),\b w_t\right)\right|+\left\|\b f\right\|_{\b L^2(\Omega)}\left\|\b w_t\right\|_{\b L^2(\Omega)}+\mu\left\|\nabla\b u(t)\right\|_{\b L^2(\Omega)}\left\|\nabla\b w_t\right\|_{\b L^2(\Omega)}+\\
&+C\left\|\b u(t)\right\|_{\b H^1(\Omega)}^2\left\|\b w_t\right\|_{\b H^1(\Omega)}.
\end{align*}

Applying $\eqref{e:StatNSParVarEstiamteWithConstanstSmall2}$ implies then
\begin{align}\label{e:StatNSParVarEstiamteWithConstanstSmall3}
\left\|\nabla\b w_t\right\|_{\b L^2(\Omega)}^2&\leq C\left(\left\|\b u(t)\right\|_{\b H^1(\Omega)}+\left\|\b f\right\|_{\b L^2(\Omega)}^2+\left\|\nabla\b u(t)\right\|_{\b L^2(\Omega)}^2+\left\|\b u(t)\right\|_{\b H^1(\Omega)}^4\right).
\end{align}

Similar calculations as in $\eqref{e:StatNSSharpParVarExistSOlInNeighbofhoodUtEst}$ yield the existence of some $C>0$ independent of $t\in I$ such that $\sup_{t\in I}\left\|\b u(t)\right\|_{\b H^1(\Omega)}\leq C.$ And thus $\eqref{e:StatNSParVarEstiamteWithConstanstSmall3}$ implies the uniform bound $\eqref{e:StatNSSHarParVarUniformEstiamteInT}$ and we can finish the proof.
\end{proof}

In the next lemma, we will show differentiability of $t\mapsto (\b u_t\circ T_t)$ if $\b u_t\in\b S_0(\varphi_0(t))$ is a family of solutions to the state equations corresponding to the transformed state $\varphi_0(t)$. A priori, we only know existence of such a family of solutions by Lemma
~\ref{l:StatNSSharpParVarExistSol}, but we do not know if this is unique, and hence it is not clear how to choose this family. But we will obtain implicitly by the arguments of the following proof that $\b S_0(\varphi_0(t))=\{\b u_t\}$ for $|t|\ll1$ and so this choice is well-defined. One could also directly show uniqueness of a solution of the state equations corresponding to $\varphi_0(t)$ for $|t|\ll1$ by using similar arguments as in third step in the next proof, but here we deduce this fact as a consequence of the following considerations, see Corollary \ref{c:CorollaryUniqueSolSmallDeforStatNS}.

\begin{lem}\label{l:StatNSSharpParVarUTCIrcTTDiffT0}
Let $\b u_t\in\b S_0\left(\varphi_0(t)\right)$ be a family of
solutions to the state equations corresponding to $\varphi_0(t)$, whose existence is guaranteed by Lemma~\ref{l:StatNSSharpParVarExistSol} for $t\in I$, if $0\in I\subset\R$ is a small interval.\\
Then the function $I\ni t\mapsto\left(\b u_t\circ T_t\right)\in\b H^1(\Omega)$ is differentiable at $t=0$ and $\dot{\b u}_0\left[V\right]:=\partial_t|_{t=0}\left(\b u_t\circ T_t\right)\in\b H^1_0(\Omega)$ with $\dot{\b u}_0\left[V\right]|_{\{\varphi_0=-1\}}=\b 0$ is given as the unique weak solution to
	\begin{equation}\label{e:StatNSApproxParallelEquUDot}\begin{split}
	&\int_{\Omega}\mu\nabla\dot{\b u}_0\left[V\right]\cdot\nabla\b z\dx+b\left(\b u_0,\dot{\b u}_0\left[V\right],\b z\right)+b\left(\dot{\b u}_0\left[V\right],\b u_0,\b z\right)=\int_{\Omega}\mu\der V\left(0\right)^T\nabla\b u_0:\nabla\b z\dx+\\
	&+\int_{\Omega}\mu\nabla\b u_0:\der V\left(0\right)^T\nabla\b z\dx+\int_{\Omega}\mu\nabla\b u_0:\nabla\left(\div V\left(0\right)\b z-\der V\left(0\right)\b z\right)\dx-\\
	&-\int_{\Omega}\mu\nabla\b u_0:\nabla\b z\div V\left(0\right)\dx+b\left(\der V\left(0\right)\b u_0,\b u_0,\b z\right)-b\left(\b u_0,\b u_0,\der V\left(0\right)\b z\right)+\\
	&+\int_{\Omega}\left(\nabla\b f\cdot V(0)\right)\cdot\b z\dx+\int_{\Omega}\b f\cdot\der V(0)\b z\dx
\end{split}\end{equation}
 which has to hold for every $\b z\in\b V^{\varphi_0}$, together with
\begin{align}\label{e:StatNSSharpParlVarDivDotBUEpsilonDelta}\div\dot{\b u}_0\left[V\right]=\nabla\b u_0:\der V(0).\end{align}
\end{lem}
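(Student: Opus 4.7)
The plan is to reproduce the implicit function theorem argument of Lemma~\ref{l:StatNSGenFctDiffVarParUDotpDotExist} in the sharp interface setting, working in a subspace that encodes the no-slip constraint on the non-fluid region. Since $\b u_t\in\b U^{\varphi_0(t)}$ vanishes on $\{\varphi_0(t)=-1\}=T_t(\{\varphi_0=-1\})$, its pullback $\b u_t\circ T_t$ vanishes on $\{\varphi_0=-1\}$, so the natural affine space for the argument is $\b H^1_{\b g,\varphi_0}(\Omega):=\{\b u\in\b H^1(\Omega)\mid\b u|_{\partial\Omega}=\b g,\ \b u=\b 0\text{ a.e.\ on }\{\varphi_0=-1\}\}$, with tangent space $\b H^1_{0,\varphi_0}(\Omega)$ and test space $\b V^{\varphi_0}$.

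I would define $F=(F_1,F_2):I\times\b H^1_{\b g,\varphi_0}(\Omega)\to(\b V^{\varphi_0})'\times\mathcal R$ by pulling back~\eqref{e:StatNSSharpConstraintsWeak} from $\Omega_t$ to $\Omega$ via the change of variables induced by $T_t$; concretely, $F_1$ is the same expression used in the proof of Lemma~\ref{l:StatNSGenFctDiffVarParUDotpDotExist} with the $\alpha_\epsilon$ terms removed, and $F_2(t,\b u)=(\der T_t^{-1}:\nabla\b u)\det\der T_t$. The codomain $\mathcal R\subset L^2_0(\Omega)$ for $F_2$ is chosen as the closed subspace of functions vanishing a.e.\ on $\{\varphi_0=-1\}$, which is compatible with the problem because $\nabla\b u_0=\b 0$ a.e.\ on that set; combined with the change-of-variables formula this yields $F(t,\b u_t\circ T_t)=0$ for $t$ in the interval $I$ of Lemma~\ref{l:StatNSSharpParVarExistSol}.

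The main technical step is verifying that $\der_u F(0,\b u_0):\b H^1_{0,\varphi_0}(\Omega)\to(\b V^{\varphi_0})'\times\mathcal R$ is an isomorphism, where $\der_u F_1(0,\b u_0)(\b u)(\b z)=\int_\Omega\mu\nabla\b u:\nabla\b z+\b u_0\cdot\nabla\b u\cdot\b z+\b u\cdot\nabla\b u_0\cdot\b z\dx$ and $\der_u F_2(0,\b u_0)\b u=\div\b u$. Coercivity of $\der_u F_1(0,\b u_0)$ on $\b V^{\varphi_0}$ is clean: $b(\b u_0,\b u,\b u)=0$ by~\eqref{e:TrilinearformLastTwoEqualZero} and $|b(\b u,\b u_0,\b u)|\leq K_\Omega\|\nabla\b u_0\|_{\b L^2(\Omega)}\|\nabla\b u\|_{\b L^2(\Omega)}^2\leq\tfrac\mu2\|\nabla\b u\|_{\b L^2(\Omega)}^2$ by~\eqref{e:ContinuityEstimateTrilinearForm} and~\eqref{e:StatNSSharpParSMallnessU0Recall}. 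The harder ingredient is surjectivity of $\div:\b H^1_{0,\varphi_0}(\Omega)\to\mathcal R$, which I expect to be the main obstacle since $\{\varphi_0=1\}$ is only a Caccioppoli set; this should follow by a Bogovski\u{\i}-type construction adapted to the fluid region, after which Lax--Milgram on the closed subspace $\b V^{\varphi_0}$ of divergence-free fields in $\b H^1_{0,\varphi_0}(\Omega)$ gives bijectivity of $\der_u F(0,\b u_0)$.

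With $\der_u F(0,\b u_0)$ invertible, the implicit function theorem applied after subtracting a fixed lift of $\b g$ (exactly as in the proof of Lemma~\ref{l:StatNSGenFctDiffVarParUDotpDotExist}) produces a unique $C^1$ curve $t\mapsto\b u(t)\in\b H^1_{\b g,\varphi_0}(\Omega)$ with $F(t,\b u(t))=0$. Uniqueness forces $\b u(t)=\b u_t\circ T_t$ for every choice of $\b u_t\in\b S_0(\varphi_0(t))$, which in particular proves $\b S_0(\varphi_0(t))=\{\b u_t\}$ for $|t|\ll1$. Differentiating $F(t,\b u_t\circ T_t)=0$ at $t=0$ and using the standard identities $\partial_t|_{t=0}\der T_t=\der V(0)$, $\partial_t|_{t=0}\det\der T_t=\div V(0)$ and $\partial_t|_{t=0}\der T_t^{-1}=-\der V(0)$ then reproduces~\eqref{e:StatNSApproxParallelEquUDot} and~\eqref{e:StatNSSharpParlVarDivDotBUEpsilonDelta}, while the constraint $\dot{\b u}_0[V]|_{\{\varphi_0=-1\}}=\b 0$ is automatic from $\b u(t)\in\b H^1_{\b g,\varphi_0}(\Omega)$.
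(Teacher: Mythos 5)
The decisive step of your plan is exactly the one that cannot be carried out at the level of generality of this lemma. You reduce everything to showing that $\der_uF(0,\b u_0)$ is an isomorphism onto $(\b V^{\varphi_0})'\times\mathcal R$, and you yourself note that this requires surjectivity of $\div:\b H^1_{0,\varphi_0}(\Omega)\to\mathcal R$, i.e.\ a Bogovski\u{\i}-type solvability of the divergence equation in the fluid region $E_0=\{\varphi_0=1\}$ with zero boundary values on $\partial E_0$. But $\varphi_0$ is only a minimizer in $BV(\Omega,\{\pm1\})$, so $E_0$ is merely a Caccioppoli set: it need not be open, let alone Lipschitz, star-shaped or John, and for such sets no solvability result for the divergence operator (equivalently, no inf-sup/LBB condition) is available --- it fails already for open sets with outward cusps. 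In the diffuse-interface analogue, Lemma~\ref{l:StatNSGenFctDiffVarParUDotpDotExist}, the divergence equation is solved on all of $\Omega$, a fixed Lipschitz domain, which is why the reference to Sohr's lemma works there; that argument does not transfer. Since the whole point of Section~4.3 is to derive the optimality system \emph{without} regularity assumptions on the minimizing set, deferring this step with ``should follow by a Bogovski\u{\i}-type construction'' leaves a genuine gap, not a technicality.

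The paper's proof is designed precisely to avoid this obstacle: no divergence component $F_2$ is ever introduced. Instead, solutions are transported with the divergence-preserving (Piola-type) transform $m(t)=(\det\der T_t)(\der T_t^{-1})\b u_t\circ T_t$ and test functions with $\b z\mapsto(\det\der T_t)\der T_t^{-1}\b z\circ T_t^{-1}\in\b V^{\varphi_0(t)}$, so the operator $F(t,\cdot)$ acts between $\b V^{\varphi_0}$ and $(\b V^{\varphi_0})'$ only, and only the easy coercivity estimates (your ``clean'' part, using \eqref{e:StatNSSharpParSMallnessU0Recall}) are needed. Moreover, the paper does not invoke the implicit function theorem as a black box: it proves by hand a quantitative lower bound for $F(0,\cdot)$ and for $\der_uF(0,\b u_0-\b G)$, deduces Lipschitz continuity of $t\mapsto m(t)$, extracts a weak limit of the difference quotients, identifies it through the linearized equation, and finally upgrades to strong $\b H^1$ convergence; the divergence relation \eqref{e:StatNSSharpParlVarDivDotBUEpsilonDelta} then falls out of undoing the transform, and uniqueness of the solution of \eqref{e:StatNSApproxParallelEquUDot}--\eqref{e:StatNSSharpParlVarDivDotBUEpsilonDelta} comes from the second-step estimate. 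Unless you can prove the divergence solvability on an arbitrary Caccioppoli fluid region (which would be a strong new result in itself), your route does not establish the lemma, and you should switch to the solenoidal-transform argument.
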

\begin{proof}
We want to use an implicit function argument similar to \cite[Theorem 6]{simonstokes}. But we cannot apply \cite[Theorem 6]{simonstokes} directly because we have nonlinear state equations and so we have to generalize this idea to this nonlinear setting here.\\
We start by defining the function $F:I\times\b V^{\varphi_0}\to \left(\b V^{\varphi_0}\right)'$
by
\begin{align*}F\left(t,\b u\right)\left(\b z\right)&=\int_{\Omega}\mu\nabla\b u:\der T_t^{-T}\nabla\left(\det\der T_t^{-1}\der T_t\b z\right)\dx-\\
&-\int_{\Omega}\mu\nabla\left(\det\der T_t\der T_t^{-1}\right)\cdot\b u:\der T_t^{-T}\nabla\left(\det\der T_t^{-1}\der T_t\b z\right)\cdot\det\der T_t\dx+\\
&+\int_{\Omega}\det\der T_t^{-1}(\der T_t)\b u\cdot\nabla\b u\left(\det\der T_t^{-1}\der T_t\b z\right)\dx-\\
&-\int_{\Omega}\der T_t\b u\cdot\nabla\left(\det\der T_t\der T_t^{-1}\right)\cdot\b u\cdot\left(\det\der T_t^{-1}\der T_t\b z\right)\dx+\\
&+\int_{\Omega}\der T_t\b u\cdot\der T_t^{-T}\nabla\b G\cdot\left(\det\der T_t^{-1}\der T_t\b z\right)\dx+\\
&+\int_{\Omega}\b G\cdot\nabla\b u\cdot\left(\det\der T_t^{-1}\der T_t\b z\right)-\b G\cdot\nabla\left(\det\der T_t\der T_t^{-1}\right)\b u\cdot\left(\der T_t\b z\right)\dx-\\
	&-\int_{\Omega}\b f\circ T_t\cdot\left(\det\der T_t^{-1}\der T_t\b z\right)\cdot\det\der T_t\dx,
	\end{align*}

\noindent where $\b G\in\b U^{\varphi_0}$ is some fixed chosen function. Roughly speaking, this means that $F\left(t,\b u\right)$ describes the state equations on $T_t(E_0)$, but transformed back to the reference region $E_0$ and reduced to homogeneous boundary data be using the function $\b G$. We will consider the state equations that are solved for the divergence-free transformation $\left(\det\der T_t\right)\left(\der T_t^{-1}\right)\b u_t\circ T_t$ of $\b u_t$ onto $T_t(E_0)$ and so there are some additional terms appearing in the definition of $F$ that correspond to $\left(\det\der T_t\right)\left(\der T_t^{-1}\right)$.\\
 Additionally, let $
	f:I\to \left(\b V^{\varphi_0}\right)'$ be defined as
\begin{align*}f\left(t\right)\left(\b z\right)&=-\int_\Omega\mu\der T_t^{-T}\nabla\b G:\der T_t^{-T}\nabla\left(\det\der T_t^{-1}\der T_t\b z\right)\cdot\det\der T_t\dx-\\
&-\int_\Omega\b G\cdot\der T_t^{-T}\nabla\b G\cdot\left(\det\der T_t^{-1}\der T_t\b z\right)\cdot\det\der T_t\dx+\\
	&+\int_\Omega\b f\circ T_t\cdot\left(\det\der T_t^{-1}\der T_t\b z\right)\cdot\det\der T_t\dx.
	\end{align*}

Direct calculations give then for all $\b u\in\b H^1(\Omega)$ and $\b z\in\b V^{\varphi_0}$ the identity
\begin{equation}\label{e:StatNSSharpPa4rVarirgendwashalt}\begin{split}
	&F\left(t,\det\der T_t(\der T_t^{-1})\b u\circ T_t-\b G\right)\left(\b z\right)+f(t)(\b z)=\\
	&=\int_{\Omega}\mu\nabla\b u\cdot\nabla\b z_t+\b u\cdot\nabla\b u\cdot\b z_t-\b f\cdot\b z_t\dx,
\end{split}\end{equation}
where we used $\b z_t:=\left(\det\der T_t\right)\der T_t^{-1}\b z\circ T_t^{-1}\in\b V^{\varphi_0(t)}$. And so in particular, this yields
\begin{align}\label{e:StatNSSHarpParVarAddLemmaZeigedasGlgerfuellt} F\left(t,\det\der T_t(\der T_t^{-1})\b u_t\circ T_t-\b G\right)=f(t)\quad\forall t\in I.\end{align}
We observe that the differentiability of $t\mapsto F\left(t,\b u\right)$ for all $\b u\in\b V^{\varphi_0}$ in a small interval around $t=0$ can be deduced directly by the regularity of the transformation $T\in{\mathcal T}_{ad}$. Moreover, we get for arbitrary $\b u\in\b V^{\varphi_0}$ and $\b z\in\b V^{\varphi_0}$:
\begin{align}\label{e:StatNSSharpParVarDerFFormel}\der_uF\left(0,\b u_0-\b G\right)\left(\b u\right)\b z=\int_\Omega\mu\nabla\b u\cdot\nabla\b z+\b u_0\cdot\nabla\b u\cdot\b z+\b u\cdot\nabla\b u_0\cdot\b z\dx.\end{align}

Now we divide the proof into several steps:
\begin{itemize}
\item \textit{1st step: } We first show that there exists some $c>0$ such that
\begin{align}\label{e:StatNSSharpParVarAddLemaFirststep}\left\|F\left(0,\b v-\b G\right)-F(0,\b u_0-\b G)\right\|_{(\b V^{\varphi_0})'}\geq c\left\|\b v-\b u_0\right\|_{\b H^1(\Omega)}\end{align}
which has to hold for all $\b v\in\b U^{\varphi_0}$.\\
To this end, we notice first that we have
\begin{equation}\begin{split}\label{e:StatNSSharpParVarAdditLemF1Ausgerechnet}
	\left(F(0,\b v-\b G)-F(0,\b u_0-\b G)\right)\b z&=\int_\Omega\mu\left(\nabla\b v-\nabla\b u_0\right)\cdot\nabla\b z+\b u\cdot\nabla\b u\cdot\b z-\\
	&-\b u_0\cdot\nabla\b u_0\cdot\b z\dx
\end{split}\end{equation}
for all $\b z\in\b V^{\varphi_0}$. Using
\begin{equation}\begin{split}\label{e:StatNSSharpParVarManipulateB}
	&b\left(\b v-\b u_0,\b v-\b u_0,\b z\right)+b\left(\b v-\b u_0,\b u_0,\b z\right)+b\left(\b u_0,\b v-\b u_0,\b z\right)=\\
	&=b\left(\b v,\b v,\b z\right)-b\left(\b u_0,\b u_0,\b z\right)
\end{split}\end{equation}
we obtain from $\eqref{e:StatNSSharpParVarAdditLemF1Ausgerechnet}$
\begin{align*}
	&\left\|F(0,\b v-\b G)-F_1(0,\b u_0-\b G)\right\|_{\left(\b V^{\varphi_0}\right)'}\geq\\
		&\geq \frac{\left|\int_\Omega\mu\left|\nabla\left(\b v-\b u_0\right)\right|^2\dx+b\left(\b v-\b u_0, \b u_0,\b v-\b u_0\right)\right|}{\left\|\b v-\b u_0\right\|_{\b H^1(\Omega)}}\geq\\
		&\geq \frac{\mu\left\|\nabla\left(\b v-\b u_0\right)\right\|_{\b L^2(\Omega)}^2-K_\Omega\left\|\nabla\b u_0\right\|_{\b L^2(\Omega)}\left\|\nabla\left(\b v-\b u_0\right)\right\|_{\b L^2(\Omega)}^2}{\left\|\b v-\b u_0\right\|_{\b H^1(\Omega)}}.
\end{align*}
As $\left\|\nabla\b u_0\right\|_{\b L^2(\Omega)}\leq\frac{\mu}{2K_\Omega}$, see $\eqref{e:StatNSSharpParSMallnessU0Recall}$, this implies the existence of a constant $c>0$ such that
\begin{align*}
	&\left\|F(0,\b v-\b G)-F(0,\b u_0-\b G)\right\|_{\left(\b V^{\varphi_0}\right)'}\geq c\frac{\left\|\nabla\left(\b v-\b u_0\right)\right\|_{\b L^2(\Omega)}^2}{\left\|\b v-\b u_0\right\|_{\b H^1(\Omega)}}\geq c\left\|\b v-\b u_0\right\|_{\b H^1(\Omega)}
\end{align*}
where we applied in the last step Poincar\'e's inequality. This proves $\eqref{e:StatNSSharpParVarAddLemaFirststep}$.

\item \textit{2nd step:} Now we want to derive a similar estimate as in the first step for the derivative of $F$. More precisely we want to show that there exists some $C>0$ such that
\begin{align}\label{e:StatNSSharpParVarAddLemResSecondStep}\left\|\der_uF\left(0,\b u_0-\b G\right)\b u\right\|_{\left(\b V^{\varphi_0}\right)'}\geq C\left\|\b u\right\|_{\b H^1(\Omega)}\quad\forall\b u\in\b V^{\varphi_0}.\end{align}
Therefore, we use the form of the derivative $\der_uF$ given by $\eqref{e:StatNSSharpParVarDerFFormel}$ and $\left\|\nabla\b u_0\right\|_{\b L^1(\Omega)}\leq\frac{\mu}{2K_\Omega}$, which follows from $\eqref{e:StatNSSharpParSMallnessU0Recall}$, and obtain similar to the first step
\begin{align*}
&\left\|\der_uF\left(0,\b u_0-\b G\right)\b u\right\|_{\left(\b V^{\varphi_0}\right)'}\geq c\frac{\left\|\nabla\b u\right\|_{\b L^2(\Omega)}^2}{\left\|\b u\right\|_{\b H^1(\Omega)}}\geq c\left\|\b u\right\|_{\b H^1(\Omega)}.
\end{align*}

\end{itemize}
For the following considerations we will use the notation
$$m(t):=\left(\det\der T_{t}\right)\left(\der T_{t}^{-1}\right)\b u_t\circ T_t\quad\forall |t|\ll1.$$
\begin{itemize}
\item \textit{3rd step:} Next we want to prove Lipschitz continuity of the mapping $I \ni t \mapsto m(t)\in\b H^1(\Omega)$ if the interval $I$ is chosen small enough.\\
We observe that the differentiability of $F$ and $f$ together with the quadratic form of $F$ imply
\begin{equation}\begin{split}\label{e:StatNSSharpParVarAddLemaDifffestimate}
	\left\|f\left(t\right)-f(0)\right\|_{\left(\b V^{\varphi_0}\right)'}\leq C\left|t\right|\quad\forall |t|\ll1
\end{split}\end{equation}
and
\begin{equation}\begin{split}\label{e:StatNSSharpParVarAddLemaDiffLargefestimate}
	&\left\|F\left(t\right)\left(\b v-\b G\right)-F(0)\left(\b v-\b G\right)\right\|_{\left(\b V^{\varphi_0}\right)'}\leq C\left|t\right|\left(\left\|\b v\right\|_{\b H^1(\Omega)}+\left\|\b v\right\|_{\b H^1(\Omega)}^2\right)\quad\forall |t|\ll1
\end{split}\end{equation}
which holds for all $\b v\in\b H^1(\Omega)$ with $\b v|_{\{\varphi_0=-1\}}=\b 0$ and $\b v|_{\partial\Omega}=\b g$. 
Moreover, it follows directly from $\eqref{e:StatNSSHarpParVarAddLemmaZeigedasGlgerfuellt}$ that
\begin{equation}\begin{split}\label{e:StatNSSharpParVarAddLemmNullerewieterungfuerLipschitz}
F\left(0,m(t)-\b G\right)&=F\left(0,m(t)-\b G\right)-F\left(t,m(t)-\b G\right)+\left(f(t)-f(0)\right)+F(0,\b u_0).
\end{split}\end{equation}
Applying $\eqref{e:StatNSSharpParVarAddLemaFirststep}$ to this identity yields

\begin{equation}\label{e:StatNSSharpParVarAddLemFirstEstimateForLipschitz}\begin{split}
 &c\left\|m(t)-\b u_0\right\|_{\b H^1(\Omega)}\leq\left\|F\left(0,m(t)-\b G\right)-F\left(0,\b u_0-\b G\right)\right\|_{\left(\b V^{\varphi_0}\right)'}=\\
&=\left\|F\left(0,m(t)-\b G\right)-F\left(t,m(t)-\b G\right)+f(t)-f(0)\right\|_{\left(\b V^{\varphi_0}\right)'}\leq \\
&\leq C\left|t\right|\left(\left\|m(t)\right\|_{\b H^1(\Omega)} +\left\|m(t)\right\|_{\b H^1(\Omega)}^2+1\right)
\end{split}\end{equation}
where we made in particular use of $\eqref{e:StatNSSharpParVarAddLemaDifffestimate}$ and $\eqref{e:StatNSSharpParVarAddLemaDiffLargefestimate}$. By using Lemma~\ref{l:StatNSSharpParVarExistSol} we can deduce that $\left\|\b u_t\circ T_t\right\|_{\b H^1(\Omega)}$ is bounded uniformly in $t$ for $|t|\ll1$ and so we can deduce from $\eqref{e:StatNSSharpParVarAddLemFirstEstimateForLipschitz}$ the existence of some $L>0$ such that it holds for $|t|\ll1$ small enough
\begin{align}\label{e:StatNSSharpParVarAddLeamLipschitz}
\left\|m(t)-m(0)\right\|_{\b H^1(\Omega)}=\left\|\left(\det\der T_t\right)\left(\der T_t^{-1}\right)\b u_t\circ T_t-\b u_0\right\|_{\b H^1(\Omega)}\leq L|t|.
\end{align}

\item \textit{4th step:} In this step we want to show the weak differentiability of $I\ni t\mapsto m(t)\in\b H^1(\Omega)$ at $t=0$. For this purpose, we start by deducing from $\eqref{e:StatNSSharpParVarAddLeamLipschitz}$ that
$$\frac{1}{|t|}\left\|m(t)-m(0)\right\|_{\b H^1(\Omega)}\leq L\quad\forall|t|\ll1.$$
And so there exists a subsequence $(t_k)_{k\in\N}$ and some element $\widetilde m\in\b V^{\varphi_0}$ such that $\left(\frac{1}{t_k}\left(m\left(t_k\right)-m(0)\right)\right)_{k\in\N}$ converges weakly in $\b H^1(\Omega)$ to $\widetilde m$. Using the differentiability assumptions on the transformation $T_t\in\overline{\mathcal T}_{ad}$, this implies additionally, that $\left(\frac{1}{t_k}\left(\b u_{t_k}\circ T_{t_k}-\b u_0\right)\right)_{k\in\N}$ converges weakly in $\b H^1(\Omega)$ to some limit element $\widetilde{\b u}\in\b H^1_0(\Omega)$ where $\widetilde{\b u}|_{\{\varphi_0=-1\}}=\b 0$.\\
As $F\left(0,\cdot\right):\b V^{\varphi_0}\to\left(\b V^{\varphi_0}\right)'$ is Fr\'echet differentiable we find that there exists some $r_F$ such that it holds for all $\b v_1,\b v_2\in\b H^1_0(\Omega)$
\begin{align}\label{e:StatNSSHarpParVarResttermDiffAdLem}\lim_{\|\b v_1-\b v_2\|_{\b H^1(\Omega)}\to0}\frac{\|r_F(\b v_1)\|_{(\b V^{\varphi_0})'}}{\|\b v_1-\b v_2\|_{\b H^1(\Omega)}}=0\end{align}
and $\der_uF\left(0,\b v_2\right)\left(\b v_1-\b v_2\right)=F\left(0,\b v_1\right)-F\left(0,\b v_2\right)+r_F(\b v_1)$. From this, we find that
\begin{equation}\label{e:StatNSSharpParVarAddLeamTermeDazuUndWeg}\begin{split}
&\der_uF\left(0,\b u_0-\b G\right)\left(m(t_k)-m(0)\right)=\left(F\left(t_k,m(0)-\b G\right)-F\left(t_k,m(t_k)-\b G\right)\right)-\\
&-\left(F\left(0,m(0)-\b G\right)-F\left(0,m(t_k)-\b G\right)\right)+\\
&+\left(F\left(0,m(0)-\b G\right)-F\left(t_k,m(0)-\b G\right)+\der_{t}F\left(0,m(0)-\b G\right)t_k\right)+\\
&+\left(f(t_k)-f(0)-f'(0)t_k\right)+f'(0)t_k-\der_{t}F\left(0,\b u_0-\b G\right)t_k+r_F\left(m(t_k)-\b G\right).
\end{split}\end{equation}

Using $\eqref{e:StatNSSharpPa4rVarirgendwashalt}$ and $\eqref{e:StatNSSharpParVarManipulateB}$ while making in particular use of the quadratic form of $F$ we can establish similar to $\eqref{e:StatNSSharpParVarAddLemaDiffLargefestimate}$
\begin{align*}
&\left\|\left(F\left(0,m\left(t_k\right)-\b G\right)-F\left(0,m(0)-\b G\right)\right)-\right.\\
&\left.-\left(F\left(t_k,m\left(t_k\right)-\b G\right)-F\left(t_k,m(0)-\b G\right)\right)\right\|_{\left(\b V^{\varphi_0}\right)'}\leq \\
&\leq C|t_k|\left(\left\|m(t_k)-m(0)\right\|_{\b H^1(\Omega)}+\left\|m\left(t_k\right)-m(0)\right\|_{\b H^1(\Omega)}^2\right)\leq C|t_k|^2\quad\forall k\gg1,
\end{align*}
where the last inequality follows from the Lipschitz continuity $\eqref{e:StatNSSharpParVarAddLeamLipschitz}$. This leads to
\begin{equation}\label{e:StatNSSHarpParVarAddLemaFirstTermWeakConv}\begin{split}&\lim_{k\to\infty}\frac{1}{|t_k|}\left\|\left(F\left(0,m(t_k)-\b G\right)-F\left(0,m(0)-\b G\right)\right)-\right.\\
&\left.-\left(F\left(t_k,m(t_k)-\b G\right)-F\left(t_k,m(0)-\b G\right)\right)\right\|_{\left(\b V^{\varphi_0}\right)'}=0.\end{split}\end{equation}
Since $F\left(\cdot,\b u_0-\b G\right):I\to\left(\b V^{\varphi_0}\right)'$ is Fr\'echet differentiable at $t=0$ we find moreover
\begin{align*}
\left\|F\left(0,\b u_0-\b G\right)-F\left(t_k,\b u_0-\b G\right)+\der_tF\left(0,\b u_0-\b G\right)t_k\right\|_{\left(\b V^{\varphi_0}\right)'}=\hbox{o}\left(|t_k|\right)
\end{align*}
and hence
\begin{align}\label{e:StatNSSHarpParVarAddLemaSecondTermWeakConv}
\lim_{k\to\infty}\left\|\frac{1}{t_k}\left(F\left(0,\b u_0-\b G\right)-F\left(t_k,\b u_0-\b G\right)+\der_tF\left(0,\b u_0-\b G\right)t_k\right)\right\|_{\left(\b V^{\varphi_0}\right)'}=0.
\end{align}
Similarly, we derive from the Fr\'echet differentiability of $f$ at $t=0$ that it holds
\begin{align}\label{e:StatNSSHarpParVarAddLemaThirdTermWeakConv}
\lim_{k\to\infty}\left\|\frac{1}{t_k}\left(f(t_k)-f(0)-f'(0)t_k\right)\right\|_{\left(\b V^{\varphi_0}\right)'}=0.
\end{align}
Now we combine $\eqref{e:StatNSSharpParVarAddLeamLipschitz}$, $\eqref{e:StatNSSHarpParVarResttermDiffAdLem}$ with the estimates $\eqref{e:StatNSSHarpParVarAddLemaFirstTermWeakConv}$, $\eqref{e:StatNSSHarpParVarAddLemaSecondTermWeakConv}$ and $\eqref{e:StatNSSHarpParVarAddLemaThirdTermWeakConv}$ to deduce from $\eqref{e:StatNSSharpParVarAddLeamTermeDazuUndWeg}$ that the weak limit $\widetilde m$ of $\left(\frac{1}{t_k}\left(m(t_k)-m(0)\right)\right)_{k\in\N}$ fulfills
\begin{align}\label{e:StatNSSharpParVarAddLemaWidetildeMSolves}
\der_uF\left(0,\b u_0-\b G\right)\widetilde m=f'(0)-\der_tF\left(0,\b u_0-\b G\right).
\end{align}
Direct calculations imply hence that $\widetilde{\b u}\in\b H^1_0(\Omega)$ with $\widetilde{\b u}|_{\{\varphi_0=-1\}}=\b 0$ solves $\eqref{e:StatNSApproxParallelEquUDot}$-$\eqref{e:StatNSSharpParlVarDivDotBUEpsilonDelta}$ and hereby we guarantee in particular solvability of $\eqref{e:StatNSApproxParallelEquUDot}$-$\eqref{e:StatNSSharpParlVarDivDotBUEpsilonDelta}$.\\

In view of the result from the second step in this proof, we find that there exists at most one solution to $\eqref{e:StatNSApproxParallelEquUDot}$-$\eqref{e:StatNSSharpParlVarDivDotBUEpsilonDelta}$, and hence $\widetilde{\b u}$ is the unique solution of $\eqref{e:StatNSApproxParallelEquUDot}$-$\eqref{e:StatNSSharpParlVarDivDotBUEpsilonDelta}$ as stated in the claim of this lemma.\\
By carrying out the same arguments for any subsequence $(t_k)_{k\in\N}$ we can conclude that $\left(\frac1t\left(m(t)-m(0)\right)\right)_t$ itself converges weakly in $\b H^1(\Omega)$ to $\widetilde m$.

\item \textit{5th step:} We now want to conclude the differentiability of $I\ni t\mapsto\b u_t\circ T_t\in\b H^1(\Omega)$ at $t=0$, which is equivalent to the differentiability of $I\ni t\mapsto m(t)\in\b H^1(\Omega)$ at $t=0$. Therefore, we have to show the strong convergence 
\begin{equation}\label{e:StatNSSHarpParVarAddLemaShowSTrongto}\lim_{t\to0}\left\|\frac1t\left(m(t)-m(0)\right)-\widetilde m\right\|_{\b H^1(\Omega)}=0.\end{equation}

For this purpose, we start by applying estimate $\eqref{e:StatNSSharpParVarAddLemResSecondStep}$, which was established in the second step of this proof, and see
\begin{equation}\label{e:StatNSSHarpParVarAddLemaShowSTrongEstim}\begin{split}
	&\left\|m(t)-m(0)-t\tilde m\right\|_{\b H^1(\Omega)}\leq C\left\|\der_uF\left(0,\b u_0-\b G\right)\left(m(t)-m(0)-t\tilde m\right)\right\|_{\left(\b V^{\varphi_0}\right)'}=\\
	&=C\left\|\der_uF\left(0,\b u_0-\b G\right)\left(m(t)-m(0)\right)-t\left(f'(0)-\der_tF\left(0,\b u_0-\b G\right)\right)\right\|_{\left(\b V^{\varphi_0}\right)'}
\end{split}\end{equation}
where we made in the last step use of $\eqref{e:StatNSSharpParVarAddLemaWidetildeMSolves}$. The considerations of the fourth step of this proof give us
\begin{align*}
\left\|\der_uF\left(0,\b u_0-\b G\right)\left(m(t)-m(0)\right)-t\left(f'(0)-\der_tF\left(0,\b u_0-\b G\right)\right)\right\|_{\left(\b V^{\varphi_0}\right)'}=\hbox{o}(|t|)
\end{align*}
and hence we find from $\eqref{e:StatNSSHarpParVarAddLemaShowSTrongEstim}$ directly $\eqref{e:StatNSSHarpParVarAddLemaShowSTrongto}$. This finally proves the statement of the lemma.

\end{itemize}

\end{proof}
From the previous lemma we obtain directly the following result concerning uniqueness of the state equations:
\begin{cor}\label{c:CorollaryUniqueSolSmallDeforStatNS}
	There exists a small interval $I\subset\R$, $0\in I$, such that $\b S_0\left(\varphi_0\circ T_t^{-1}\right)=\{\b u_t\}$ for all $t\in I$, thus there exists a unique solution to the state equations $\eqref{e:StatNSSharpConstraintsWeak}$ corresponding to small deformations $\varphi_0\circ T_t^{-1}$, $|t|\ll1$, of the minimizer $\varphi_0$.
\end{cor}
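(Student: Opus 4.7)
The plan is to reduce the uniqueness claim to a local implicit function argument around the reference pair $(0,\b u_0-\b G)$, reusing the machinery already assembled in the proof of Lemma~\ref{l:StatNSSharpParVarUTCIrcTTDiffT0}.

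First, I would upgrade the coercivity bound $\eqref{e:StatNSSharpParVarAddLemResSecondStep}$ obtained in the second step of that proof into full invertibility of the linearization $\der_u F(0,\b u_0-\b G):\b V^{\varphi_0}\to(\b V^{\varphi_0})'$. That estimate already yields injectivity together with closed range, and since the convective contributions $\b u\mapsto\b u_0\cdot\nabla\b u+\b u\cdot\nabla\b u_0$ define a compact operator into $(\b V^{\varphi_0})'$ by the strong continuity in Lemma~\ref{l:TrilinearFormStrongCont}, Fredholm's alternative supplies the missing surjectivity.

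Second, I would apply the classical implicit function theorem to the $C^1$-map $(t,\b v)\mapsto F(t,\b v-\b G)-f(t)$ at $(0,\b u_0-\b G)$, producing a neighborhood $I_0\times U_0$ of $(0,\b u_0-\b G)$ in which the equation $F(t,\b v-\b G)=f(t)$ possesses a unique solution branch $\b v=m_\ast(t)$ with $m_\ast(0)=\b u_0$. For an arbitrary candidate $\tilde{\b u}_t\in\b S_0(\varphi_0(t))$, the transformed object $m(t):=(\det\der T_t)(\der T_t^{-1})\tilde{\b u}_t\circ T_t$ satisfies $F(t,m(t)-\b G)=f(t)$ by identity~$\eqref{e:StatNSSHarpParVarAddLemmaZeigedasGlgerfuellt}$. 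The Lipschitz estimate carried out in the third step of the preceding lemma — whose derivation only used the lower bound $\eqref{e:StatNSSharpParVarAddLemaFirststep}$, the growth bound $\eqref{e:StatNSSharpParVarAddLemaDiffLargefestimate}$ and a uniform $\b H^1$-bound on $\tilde{\b u}_t\circ T_t$ — then places $m(t)$ inside $U_0$ for $|t|$ small enough, forcing $m(t)=m_\ast(t)$ and hence $\tilde{\b u}_t=\b u_t$.

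The main technical point I expect to need care with is verifying that every element of $\b S_0(\varphi_0(t))$, not only the specific representative supplied by Lemma~\ref{l:StatNSSharpParVarExistSol}, satisfies a uniform $\b H^1$-bound as $t\to 0$, so that the third-step Lipschitz argument can be invoked for arbitrary solution families. This is however routine: the energy estimate $\eqref{e:StatNSParVarEstiamteWithConstanstSmall3}$ in the existence proof only used the properties of the fixed lift $\b u(t)$ and the smallness property $\eqref{e:StatNSParVarEstiamteWithConstanstSmall2}$, both of which apply to any solution of the state equations corresponding to $\varphi_0(t)$.
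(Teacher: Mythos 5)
Your argument is correct, but it takes a genuinely different route from the paper. The paper's own proof of Corollary~\ref{c:CorollaryUniqueSolSmallDeforStatNS} is much shorter: it takes the particular family $\b u_t$ from Lemma~\ref{l:StatNSSharpParVarExistSol}, uses the continuity of $t\mapsto\b u_t\circ T_t$ at $t=0$ (a byproduct of Lemma~\ref{l:StatNSSharpParVarUTCIrcTTDiffT0}) together with $\|\der T_t\|_\infty,\|\det\der T_t\|_\infty\leq 1+C|t|$ to conclude $\left\|\nabla\b u_t\right\|_{\b L^2(\Omega)}<\frac{\mu}{K_\Omega}$ for $|t|\ll1$, and then invokes Lemma~\ref{l:SharpUniqueStateEquiForSmallU}, whose energy argument needs only \emph{one} small-norm solution to exclude all competitors; in particular no control of arbitrary elements of $\b S_0(\varphi_0(t))$ is required. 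You instead bypass Lemma~\ref{l:SharpUniqueStateEquiForSmallU} entirely: you upgrade the second-step estimate $\eqref{e:StatNSSharpParVarAddLemResSecondStep}$ to invertibility of $\der_uF(0,\b u_0-\b G)$ (your Fredholm argument works, since the convective part is indeed compact by the compact embedding $\b H^1(\Omega)\hookrightarrow\b L^4(\Omega)$ and $\eqref{e:StatNSStrongCong}$, though Lax--Milgram is more direct here: by $\eqref{e:TrilinearformLastTwoEqualZero}$, $\eqref{e:ContinuityEstimateTrilinearForm}$ and $\eqref{e:StatNSSharpParSMallnessU0Recall}$ the bilinear form in $\eqref{e:StatNSSharpParVarDerFFormel}$ is coercive on $\b V^{\varphi_0}$), apply the classical implicit function theorem to get a locally unique branch, and then show every competitor's pull-back $m(t)$ lands in the uniqueness neighbourhood. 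The latter step is where your approach has to work harder: you correctly observe that the bound $\eqref{e:StatNSParVarEstiamteWithConstanstSmall3}$ and the Lipschitz estimate $\eqref{e:StatNSSharpParVarAddLeamLipschitz}$ use only properties of the lift $\b u(t)$, the identity $\eqref{e:StatNSSHarpParVarAddLemmaZeigedasGlgerfuellt}$ and the bounds $\eqref{e:StatNSSharpParVarAddLemaFirststep}$, $\eqref{e:StatNSSharpParVarAddLemaDiffLargefestimate}$, so they apply verbatim to an arbitrary family of solutions. (Once you have that, you could in fact stop earlier: $\|m(t)-\b u_0\|_{\b H^1(\Omega)}\leq L|t|$ for an arbitrary solution already yields $\|\nabla\tilde{\b u}_t\|_{\b L^2(\Omega)}<\frac{\mu}{K_\Omega}$ for $|t|\ll1$, and Lemma~\ref{l:SharpUniqueStateEquiForSmallU} finishes without any implicit function theorem or Fredholm theory.) What your route buys is independence from the elementary uniqueness lemma and a local solution branch with built-in uniqueness; what it costs is the verification of joint $C^1$/continuity hypotheses for the IFT and the invertibility of the linearization, which the paper's two-line reduction to Lemma~\ref{l:SharpUniqueStateEquiForSmallU} avoids.
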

\begin{proof}
	By Lemma~\ref{l:StatNSSharpParVarExistSol} we have for every $t\in I$, if $I\subset\R$ is chosen small enough, a solution $\b u_t\in\b S_0\left(\varphi_0\circ T_t^{-1}\right)$ for the state equations $\eqref{e:StatNSSharpConstraintsWeak}$ corresponding to $\varphi_0\circ T_t^{-1}$. Lemma~\ref{l:StatNSSharpParVarUTCIrcTTDiffT0} guarantees additionally that $t\mapsto \left(\b u_t\circ T_t\right)\in\b H^1(\Omega)$ is continuous. Hence there exists some $t'>0$ such that
	$$\left\|\nabla\left(\b u_t\circ T_t\right)-\nabla\b u_0\right\|_{\b H^1(\Omega)}\leq \frac{\mu}{4K_\Omega}\quad\forall |t|\leq t'$$
	which implies
	$$\left\|\nabla\left(\b u_t\circ T_t\right)\right\|_{\b H^1(\Omega)}\leq \frac{\mu}{4K_\Omega}+\left\|\nabla\b u_0\right\|_{\b L^2(\Omega)}\stackrel{\eqref{e:StatNSSharpParSMallnessU0Recall}}{\leq} \frac{3\mu}{4K_\Omega} \quad\forall |t|\leq t'.$$
	Using as in the proof of Lemma~\ref{l:StatNSSharpParVarExistSol} that $\|\der T_t\|_\infty\leq 1+C|t|$ and $\|\det\der T_t\|_\infty\leq 1+C|t|$ for $|t|\ll1$ we can deduce therefrom the existence of some $c>0$ such that $c<\frac{\mu}{K_\Omega}$ and
		$$\left\|\nabla\b u_t\right\|_{\b H^1(\Omega)}\leq c<\frac{\mu}{K_\Omega}\quad\forall |t|\ll1.$$
		Now the statement follows from Lemma~\ref{l:SharpUniqueStateEquiForSmallU}.
\end{proof}

We thus have proved that local deformations $\varphi_0(t)=\varphi_0\circ T_t^{-1}$ along suitable transformations $T\in{\mathcal T}_{ad}$ of the minimizer $\varphi_0$ still inherit a unique solution of the state equations, thus $\b S_0\left(\varphi_0(t)\right)=\left\{\b u_t\right\}$. Moreover, we know that $t\mapsto\b u_t\circ T_t$ is differentiable at $t=0$ as a mapping into $\b H^1(\Omega)$ and have derived a system that defines the derivative $\partial_t|_{t=0}\left(\b u_t\circ T_t\right)$. And so we can finally formulate first order optimality conditions for the sharp interface problem $\eqref{e:StokesGenMinProblemFctlSharp}-\eqref{e:StatNSSharpConstraintsWeak}$.

Since Corollary~\ref{c:CorollaryUniqueSolSmallDeforStatNS} implies $\b S_0(\varphi_0(t))=\left\{\b u_t\right\}$ for $t$ small enough we can define
$$j_0(\varphi_0(t)):=J_0(\varphi_0(t),\b u_t).$$
Thus we find:

\begin{thm}
	\label{t:StatNSSharpParVarConvOptCondDistProb}
For any minimizer $(\varphi_0,\b u_0)$ of $\eqref{e:StokesGenMinProblemFctlSharp}-\eqref{e:StatNSSharpConstraintsWeak}$ with $\|\nabla\b u_0\|_{\b L^2(\Omega)}\leq\frac{\mu}{2K_\Omega}$ we have the following necessary optimality condition:

\begin{align}\label{e:StatNSSharpParVarConvOptCondLimitDist}\partial_t|_{t=0}j_0\left(\varphi_0\circ T_t^{-1}\right)=-\lambda_0\int_\Omega\varphi_0\div V(0)\dx,\quad \lambda_0\left(\int_\Omega\varphi_0\dx-\beta\left|\Omega\right|\right)=0
\end{align}
for all $T\in{\mathcal T}_{ad}$ with velocity $V\in{\mathcal V}_{ad}$. Here $\lambda_0\geq0$ is a Lagrange multiplier for the integral constraint and the derivative is given by the following formula:
	\begin{equation}\begin{split}\label{e:StatNSSharpVarParResVariationEpsilon}
		&\partial_t|_{t=0}j_0\left(\varphi_0\circ T_t^{-1}\right)=\int_\Omega\left[\der f\left(x,\b u_0,\der\b u_0\right)\left(V(0),\dot{\b u}_0\left[V\right],\der\dot{\b u}_0\left[V\right]-\der\b u_0\der V(0)\right)+\right.\\
		&+\left.f\left(x,\b u_0,\der\b u_0\right)\div V(0)\right]\dx+\gamma c_0\int_\Omega\left(\div V(0)-\nu\cdot\nabla V(0)\nu\right)\,\mathrm d\left|\der\chi_{E_0}\right|
	\end{split}\end{equation}
	with $\nu=\frac{\der\chi_{E_0}}{\left|\der\chi_{E_0}\right|}$ being the generalised unit normal on the Caccioppoli set $E_0=\left\{\varphi_0=1\right\}$, compare \cite{ambrosio}. Moreover $\dot{\b u}_0\left[V\right]\in\b H^1_0(\Omega)$ with $\dot{\b u}_0\left[V\right]=\b 0$ a.e. in $\{\varphi_0=-1\}$ is given as solution of $\eqref{e:StatNSApproxParallelEquUDot}-\eqref{e:StatNSSharpParlVarDivDotBUEpsilonDelta}$.
\end{thm}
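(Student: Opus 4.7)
The plan is to combine the uniqueness result Corollary~\ref{c:CorollaryUniqueSolSmallDeforStatNS} with the differentiability result Lemma~\ref{l:StatNSSharpParVarUTCIrcTTDiffT0}, differentiate the (now well-defined) reduced functional $j_0(\varphi_0(t)) = J_0(\varphi_0(t), \b u_t)$ at $t=0$ by the chain rule, and finally extract the Lagrange multiplier $\lambda_0$ via a standard constrained-optimization argument. The assumption $\|\nabla \b u_0\|_{\b L^2(\Omega)} \leq \mu/(2K_\Omega)$ is needed precisely to invoke Corollary~\ref{c:CorollaryUniqueSolSmallDeforStatNS}, which makes $t \mapsto \b u_t$ single-valued for $|t| \ll 1$, so that $j_0$ makes sense on a neighbourhood of $0$.

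To compute $\partial_t|_{t=0} j_0(\varphi_0(t))$ I would split $J_0$ into its state-dependent part and its perimeter part. For $\int_\Omega f(x, \b u_t, \der \b u_t) \dx$ I change variables via $x = T_t(y)$ and use $\der \b u_t(T_t(y)) = \der(\b u_t \circ T_t)(y)\, \der T_t(y)^{-1}$ to rewrite it as
\[
\int_\Omega f\!\left(T_t(y),\, (\b u_t \circ T_t)(y),\, \der(\b u_t \circ T_t)(y)\, \der T_t(y)^{-1}\right) \det \der T_t(y) \dy.
\]
Differentiation at $t=0$ using $\partial_t|_{t=0} T_t = V(0)$, $\partial_t|_{t=0} \der T_t = \der V(0)$, $\partial_t|_{t=0} \det \der T_t = \div V(0)$, $\partial_t|_{t=0} \der T_t^{-1} = -\der V(0)$, together with $\partial_t|_{t=0}(\b u_t \circ T_t) = \dot{\b u}_0[V]$ from Lemma~\ref{l:StatNSSharpParVarUTCIrcTTDiffT0} and the growth bounds of Assumption~\ref{a:Diff} (so that standard Nemytskii theory justifies differentiating under the integral), delivers the first two lines of \eqref{e:StatNSSharpVarParResVariationEpsilon}. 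For the perimeter contribution $\gamma c_0 P_\Omega(T_t(E_0))$ I invoke the classical shape-derivative formula for the total variation of a Caccioppoli set under a Lipschitz diffeomorphism, giving $\gamma c_0 \int_\Omega (\div V(0) - \nu \cdot \nabla V(0) \nu)\,\mathrm d|\der \chi_{E_0}|$.

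The Lagrange multiplier is then produced by a case distinction on the integral constraint. If $\int_\Omega \varphi_0 \dx < \beta|\Omega|$, the constraint is inactive: both $\pm V$ give admissible deformations for small $|t|$, minimality forces $\partial_t|_{t=0} j_0 = 0$, and $\lambda_0 := 0$ works. If instead the constraint is active, a direct change of variables yields $\partial_t|_{t=0} \int_\Omega \varphi_0(t) \dx = \int_\Omega \varphi_0 \div V(0) \dy$; one-sided admissibility combined with the linearity of the derivative in $V$ forces $\partial_t|_{t=0} j_0 = 0$ on the hyperplane $\{V : \int_\Omega \varphi_0 \div V(0) \dy = 0\}$ and the correct sign on the complement. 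A Hahn--Banach style separation then supplies $\lambda_0 \geq 0$ with the claimed identity, exactly as for the Stokes case in \cite[Theorem 3]{GarckeHechtStokes}.

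The genuine obstacle is the perimeter derivative, since $E_0$ is assumed to be no more regular than a Caccioppoli set; I cannot parametrize $\partial^* E_0$ as a manifold and expand surface integrals in the usual way. The formula must therefore be derived purely at the level of measures, either through the dual characterization $P_\Omega(E) = \sup\{\int_E \div \phi \dx : \phi \in C_c^1(\Omega, \R^d),\ |\phi| \leq 1\}$ with the explicit substitution $\phi \mapsto (\det \der T_t)\, \der T_t^{-1}(\phi \circ T_t)$, or via the push-forward identity $|\der \chi_{T_t(E_0)}| = (T_t)_\sharp\!\left(|\det \der T_t|\, \|\der T_t^{-T}\nu\|\, |\der \chi_{E_0}|\right)$ followed by a first-order Taylor expansion in $t$. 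Modulo this one measure-theoretic step, the remaining computations closely follow the Stokes argument in \cite[Theorem 3]{GarckeHechtStokes}, with the new convective terms already absorbed into the system \eqref{e:StatNSApproxParallelEquUDot}-\eqref{e:StatNSSharpParlVarDivDotBUEpsilonDelta} that defines $\dot{\b u}_0[V]$.
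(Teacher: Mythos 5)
Your proposal is correct and follows essentially the same route as the paper: the paper's own proof consists precisely of invoking Lemma~\ref{l:StatNSSharpParVarExistSol}, Lemma~\ref{l:StatNSSharpParVarUTCIrcTTDiffT0} and Corollary~\ref{c:CorollaryUniqueSolSmallDeforStatNS} together with the ``direct calculations'' (transport to the reference configuration, differentiation under the integral via Assumption~\ref{a:Diff}, and the measure-theoretic first variation of the perimeter of a Caccioppoli set) and the Lagrange-multiplier argument of \cite[Theorem 3]{GarckeHechtStokes}, all of which you spell out explicitly and correctly, including the push-forward identity for $|\der\chi_{T_t(E_0)}|$ that handles the low regularity of $E_0$.
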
\begin{proof}
	This follows by using the previous results and direct calculations. The existence of a Lagrange multiplier $\lambda_0\geq0$ for the integral constraint follows as in \cite[Theorem 3]{GarckeHechtStokes}, see also \cite{hecht}.
	
\end{proof}

\begin{remark}\label{r:Hadamard}
	 Assume that $E_0:=\mathrm{int}\left(\left\{\varphi_0=1\right\}\right)$ is a well-defined open subset of $\Omega$ such that $\partial E_0\cap\Omega\in C^2$, $E_0$ has finitely many connected components, $\b g\in\b H^{\frac32}\left(\partial\Omega\right)$ and $(\der_2f\left(\cdot,\b u_0,\der\b u_0\right)-\linebreak[4]\div\der_3f\left(\cdot,\b u_0,\der\b u_0\right))\in\b L^2(E_0)$ for $\b u_0\in\b H^2(E_0)$. Then one can also derive the ``classical'' shape derivatives which can for a large class of possible objective functionals be rewritten in the well-known Hadamard form, compare for instance \cite{bello, delfour,Schmidt_shape_derivative_NavierStokes, sokolowski}. In this case, the optimality conditions derived in Theorem~\ref{t:StatNSSharpParVarConvOptCondDistProb} can be shown to be equivalent to the following system, which can be obtained by classical calculus:
	
	\begin{align*}
	&\int_{E_0}\der\left(f\left(x,\b u_0,\der\b u_0\right)\right)V(0)\dx+\int_\Omega f\left(x,\b u_0,\der\b u_0\right)\div V(0)\dx+\\
	&+\int_{\partial E_0\cap\Omega}\left(\mu\partial_\nu\b q_0\cdot\partial_\nu\b u_0-\left(\der_3f\right)\left(x,\b u_0,\der\b u_0\right)\nu\cdot\partial_\nu\b u_0+\gamma c_0\kappa+2\lambda_0\right) V(0)\cdot\nu\dx=0,
	\end{align*}
which holds for all $V\in\mathcal V_{ad}$. Here, $\b u_0\in\b U^{\varphi_0}$ solves the state equations $\eqref{e:StatNSSharpConstraintsWeak}$ corresponding to $\varphi_0$ and $\b q_0\in\b H^1_0(E_0)$ with $\div\b q_0=0$ is the solution of the adjoint equation
	$$\int_{E_0}\mu\nabla\b q_0\cdot\nabla\b v\dx+b\left(\b v,\b u_0,\b q_0\right)-b\left(\b u_0,\b q_0,\b v\right)=\int_{E_0}\der_{(2,3)}f\left(x,\b u_0,\der\b u_0\right)\left(\b v,\der\b v\right)\dx$$
	which as to hold  for all $\b v\in\b H^1_0(E_0)$ with $\div\b v=0$.	For details, we refer to \cite[Section 26]{hecht}.
\end{remark}

\section{Sharp interface limit, $\epsilon\searrow0$}\label{s:SharpLimit}

In this section, we want to pass to the limit $\epsilon\searrow0$, which means that the interfacial thickness tends to zero and simultaneously also the permeability of the medium outside the fluid region, given by $\left(\alpha_\epsilon\left(-1\right)\right)^{-1}$, tends to zero. When we pass to this limit, we have to consider the state equations, too, and obtain a sequence of velocities depending on the phase field parameter $\epsilon$. Under suitable assumptions, one can show that the sequence converges to a velocity field solving the sharp interface state equation $\eqref{e:StatNSSharpConstraintsWeak}$. To ensure that this limit element coincides with a given velocity field solving $\eqref{e:StatNSSharpConstraintsWeak}$ we need uniqueness of a solution to $\eqref{e:StatNSSharpConstraintsWeak}$ in a minimizer. This is important, since the objective functional may have a different value for two different solutions of $\eqref{e:StatNSSharpConstraintsWeak}$. For a fixed set $E$, one could simply assume smallness of the data and obtain a uniqueness result as for instance in \cite{galdi}. But as we have non-homogeneous boundary data we would have to assume an upper bound on a constant depending on the trace operator on $E$. As we will vary $E$ as a part of the problem, and it is not clear how this constant depends on $E$, this is not the right procedure here. We refer to \cite[Section 11.1, 11.2]{hecht} on details concerning this difficulty.

To overcome this problem, we control the velocity by the objective functional and ensure in this way that $\left\|\b u\right\|_{\b H^1(\Omega)}$ is small enough for the minimizing set $E$, if $\b u$ solves $\eqref{e:StatNSSharpConstraintsWeak}$. Thus, we make the following additional assumption for the remainder of this paper:

\begin{list}{\theAssCount}{\usecounter{AssCount}}\setcounter{AssCount}{\value{AssListCount}
}
 \item\label{a:StatNSUniqunessAss} We assume, that the body force $\b f\in\b L^2(\Omega)$, the boundary term $\b g\in\b H^{\frac12}\left(\partial\Omega\right)$, the viscosity $\mu$ and the objective functional $f$ are chosen such that:
	\begin{enumerate}
		\item there exists some constant $C_u\in\R$ fulfilling
		\begin{align}\label{e:SmallnessCondUniqueConclusion}J_0(\varphi,\b u)\leq C_u\implies \left\|\nabla\b u\right\|_{\b L^2(\Omega)}\leq\frac{\mu}{2K_\Omega}\end{align}
		for all $\varphi\in \Phi_{ad}^0$ and $\b u\in\b S_0(\varphi)$; and
		\item there exists at least one $\varphi_0\in \Phi_{ad}^0$ and $\b u_0\in\b S_0(\varphi_0)$ with
	\begin{align}\label{e:SmallnessCondUniqueCompareFcts}J_0(\varphi_0,\b u_0)\leq C_u.\end{align}
	\end{enumerate}

 \setcounter{AssListCount}{\value{AssCount}}
\end{list}

This requirement will imply unique solvability of the state equations in a neighborhood of the minimizer of $\eqref{e:StokesGenMinProblemFctlSharp}-\eqref{e:StatNSSharpConstraintsWeak}$, see Corollary~\ref{c:StatNSUEpsilonUniqueIfEpsilonSmall}, which will be crucial for the convergence proof, see Theorem~\ref{t:StatNSSharpConvergence}.

\begin{bsp}
	Let's consider the problem of minimizing the total potential power, which leads to the following objective functional in the sharp interface formulation:
	$$J_0\left(\varphi,\b u\right):=\int_\Omega\frac\mu2\left|\nabla\b u\right|^2-\b f\cdot\b u\dx+\gamma c_0P_\Omega\left(E^\varphi\right).$$
	One sees by direct calculations that in this case Assumption~\ref{a:StatNSUniqunessAss} is equivalent to the usual ``smallness of data or high viscosity'' stated in literature concerning uniqueness of the stationary Navier-Stokes equations, cf. \cite{galdi,temam,zeidler4}. Those calculations can for instance be found in \cite[Example 11.1]{hecht}.
	\end{bsp}
	
	We directly see:
	\begin{lem}\label{l:PropMinimizerJ0N}
	Every minimizer $\left(\varphi,\b u\right)$ of the sharp interface problem $\eqref{e:StokesGenMinProblemFctlSharp}-\eqref{e:StatNSSharpConstraintsWeak}$, so in particular $\b u\in\b S_0\left(\varphi\right)$, fulfills
	\begin{align}\label{e:J0NMinimizerSmallnessCond}\left\|\nabla\b u\right\|_{\b L^2(\Omega)}\leq\frac{\mu}{2K_\Omega}.\end{align}
	In particular, this implies by Lemma~\ref{l:SharpUniqueStateEquiForSmallU} that $\b S_0\left(\varphi\right)=\left\{\b u\right\}$.
\end{lem}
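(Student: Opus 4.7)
The plan is to exploit Assumption~\ref{a:StatNSUniqunessAss} directly: part~(ii) supplies a concrete admissible comparison pair whose objective value is bounded by $C_u$, and part~(i) turns any such bound into the desired gradient estimate. Since a minimizer must undercut the comparison value, both ingredients fit together in essentially one step.

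More precisely, let $(\varphi, \b u) \in \Phi_{ad}^0 \times \b U^\varphi$ with $\b u \in \b S_0(\varphi)$ be a minimizer of $\eqref{e:StokesGenMinProblemFctlSharp}$--$\eqref{e:StatNSSharpConstraintsWeak}$. By part~(ii) of Assumption~\ref{a:StatNSUniqunessAss} there exists an admissible pair $(\varphi_0, \b u_0)$ with $\b u_0 \in \b S_0(\varphi_0)$ such that $J_0(\varphi_0, \b u_0) \leq C_u$. Since $(\varphi, \b u)$ is a minimizer and $(\varphi_0, \b u_0)$ is admissible for the same optimization problem, minimality yields
\begin{equation*}
J_0(\varphi, \b u) \leq J_0(\varphi_0, \b u_0) \leq C_u.
\end{equation*}
Applying part~(i) of Assumption~\ref{a:StatNSUniqunessAss}, namely the implication $\eqref{e:SmallnessCondUniqueConclusion}$, to the minimizer $(\varphi, \b u)$ then gives exactly $\|\nabla \b u\|_{\b L^2(\Omega)} \leq \frac{\mu}{2K_\Omega}$, which is $\eqref{e:J0NMinimizerSmallnessCond}$.

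For the uniqueness claim, note that $\frac{\mu}{2K_\Omega} < \frac{\mu}{K_\Omega}$, so the bound just obtained is strictly stronger than the smallness condition $\eqref{e:NecessarySmallnessUForSharpUnique}$ required in Lemma~\ref{l:SharpUniqueStateEquiForSmallU}. Invoking that lemma with $\b u$ as the distinguished solution of $\eqref{e:StatNSSharpConstraintsWeak}$ yields $\b S_0(\varphi) = \{\b u\}$.

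There is no real obstacle here; the content is entirely packaged into Assumption~\ref{a:StatNSUniqunessAss} and Lemma~\ref{l:SharpUniqueStateEquiForSmallU}, so the argument reduces to chaining the comparison-pair bound through minimality and then reading off the uniqueness from the strict inequality $\tfrac{\mu}{2K_\Omega} < \tfrac{\mu}{K_\Omega}$.
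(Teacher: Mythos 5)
Your proposal is correct and follows exactly the paper's argument: compare the minimizer against the pair supplied by part (2) of Assumption~\ref{a:StatNSUniqunessAss}, use minimality to get $J_0(\varphi,\b u)\leq C_u$, and apply the implication $\eqref{e:SmallnessCondUniqueConclusion}$; the uniqueness then follows from Lemma~\ref{l:SharpUniqueStateEquiForSmallU} since $\tfrac{\mu}{2K_\Omega}<\tfrac{\mu}{K_\Omega}$, just as stated in the lemma.
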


\begin{proof}

Assume to have an arbitrary minimizer $\left(\varphi,\b u\right)$ of the sharp interface problem $\eqref{e:StokesGenMinProblemFctlSharp}-\eqref{e:StatNSSharpConstraintsWeak}$. Let $\left(\varphi_c,\b u_c\right)$ be such that
$J_0\left(\varphi_c,\b u_c\right)\leq C_u$ which are given by Assumption~\ref{a:StatNSUniqunessAss}. Then it holds, since $\left(\varphi,\b u\right)$ minimize $J_0$ in particular
$J_0(\varphi,\b u)\leq J_0\left(\varphi_c,\b u_c\right)\leq C_u$ and so by $\eqref{e:SmallnessCondUniqueConclusion}$ we deduce 
$\left\|\nabla\b u\right\|_{\b L^2(\Omega)}\leq\frac{\mu}{2K_\Omega}$ which proves $\eqref{e:J0NMinimizerSmallnessCond}$.
\end{proof}

\begin{remark}
	Using the results of Lemma~\ref{l:PropMinimizerJ0N}, we see that for a minimizer $\left(\varphi,\b u\right)\in L^1(\Omega)\times\b H^1(\Omega)$ of the sharp interface problem, the state equations $\eqref{e:StatNSSharpConstraintsWeak}$ corresponding to $\varphi$ have due to Lemma~\ref{l:SharpUniqueStateEquiForSmallU} always a \emph{unique} solution, thus $\b S_0(\varphi)=\left\{\b u\right\}$.\\
	This will play an essential role when showing that minimizers of $\left(J_\epsilon\right)_{\epsilon>0}$ converge to a minimizer of $J_0$, see Theorem~\ref{t:StatNSSharpConvergence}.
\end{remark}

	Additionally, we need for the sharp interface convergence the radially unboundedness of the objective functional with respect to the velocity. Hence the following assumption is necessary for the remainder of this work:
	
\begin{list}{\theAssCount}{\usecounter{AssCount}}\setcounter{AssCount}{\value{AssListCount}
}
 \item\label{a:RadiallyUnbounded} We assume, that $F:\b U\to\R$, $F(\b u):=\int_\Omega f\left(x,\b u(x),\der\b u(x)\right)\dx$ is radially unbounded, i.e. for any sequence $(\b u_k)_{k\in\N}\subset\b U$ with $\lim_{k\to\infty}\|\b u_k\|_{\b H^1(\Omega)}=\infty$ we have $\lim_{k\to\infty}F(\b u_k)=+\infty$.

 \setcounter{AssListCount}{\value{AssCount}}
\end{list}

	\subsection{Convergence of minimizers}\label{s:ConvMinim}
	The first main result concerning the sharp interface limit is given by the following theorem:

\begin{thm}\label{t:StatNSSharpConvergence}
Let $\left(\varphi_\epsilon,\b u_\epsilon\right)_{\epsilon>0}\subseteq L^1(\Omega)\times\b H^1(\Omega)$ be minimizers of the phase field problems $\eqref{e:StatNSGenMinProblemFctl}-\eqref{e:StatNSGenConstraintsWeak}$. Then there exists a subsequence, denoted by the same, and an element $\left(\varphi_0,\b u_0\right)\in L^1(\Omega)\times\b H^1(\Omega)$ such that
\begin{align*}
\varphi_\epsilon\stackrel{\epsilon\searrow0}{\rightarrow}\varphi_0\quad\text{in }L^1(\Omega),\qquad \b u_\epsilon\stackrel{\epsilon\searrow0}{\rightharpoonup}\b u_0\quad\text{in }\b H^1(\Omega).
\end{align*}

If it holds	\begin{align}\label{e:StatNsGammaLimitGrothcondOnMinim}\left\|\varphi_\epsilon-\varphi_0\right\|_{L^1\left(\left\{x\in\Omega\mid \varphi_0(x)=1,\varphi_\epsilon(x)<0\right\}\right)}=\mathcal O\left(\epsilon\right)
	\end{align}
then we obtain additionally $\lim_{\epsilon\searrow0}\|\b u_\epsilon-\b u_0\|_{\b H^1(\Omega)}=0$. Moreover, $\left(\varphi_0,\b u_0\right)$ is then a minimizer of the sharp interface problem $\eqref{e:StokesGenMinProblemFctlSharp}-\eqref{e:StatNSSharpConstraintsWeak}$ and 
\begin{align}\label{e:StatNSSharpConvObjFctl}\lim_{\epsilon\searrow0}J_\epsilon\left(\varphi_\epsilon,\b u_\epsilon\right)=J_0\left(\varphi_0,\b u_0\right).\end{align}
\end{thm}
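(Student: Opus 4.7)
The plan is to split the argument into (i) compactness and extraction of limits for any sequence of phase field minimizers, (ii) identification of the limit as an admissible sharp interface pair, and (iii) minimality together with strong convergence under the extra hypothesis \eqref{e:StatNsGammaLimitGrothcondOnMinim}.

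For the first step I would construct a recovery sequence for the comparison pair $(\varphi_c,\b u_c)$ provided by Assumption~\ref{a:StatNSUniqunessAss}, using the standard Modica--Mortola optimal profile across $\partial\{\varphi_c=1\}$, together with $\b u_{c,\epsilon}\in \b S_\epsilon(\varphi_{c,\epsilon})$ obtained from Lemma~\ref{l:StatNSExistSolOperatorSEpsilon}. The growth condition $\overline{\alpha}_\epsilon=o(\epsilon^{-2/3})$ together with $\b u_c|_{\{\varphi_c=-1\}}=\b 0$ yields $\limsup J_\epsilon(\varphi_{c,\epsilon},\b u_{c,\epsilon})\le J_0(\varphi_c,\b u_c)$, hence a uniform bound on $J_\epsilon(\varphi_\epsilon,\b u_\epsilon)$. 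This bound controls the Ginzburg--Landau energy (giving $BV$ compactness of $\varphi_\epsilon$ via Modica--Mortola) and, through Assumption~\ref{a:RadiallyUnbounded}, bounds $\|\b u_\epsilon\|_{\b H^1}$. Extracting a subsequence we get $\varphi_\epsilon\to\varphi_0$ in $L^1(\Omega)$ with $\varphi_0\in BV(\Omega,\{\pm 1\})$, and $\b u_\epsilon\rightharpoonup\b u_0$ in $\b H^1(\Omega)$, hence strongly in $\b L^p(\Omega)$ for $p<6$.

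Next I would identify $(\varphi_0,\b u_0)$ as a sharp interface admissible pair. Uniform boundedness of $\int_\Omega \alpha_\epsilon(\varphi_\epsilon)|\b u_\epsilon|^2\dx$ combined with $\alpha_\epsilon(\varphi_\epsilon)\to\infty$ a.e.\ on $\{\varphi_0=-1\}$ (via Fatou) forces $\b u_0=\b 0$ a.e.\ on $\{\varphi_0=-1\}$, so $\b u_0\in\b U^{\varphi_0}$. The integral and boundary constraints pass to the limit, giving $\varphi_0\in\Phi_{ad}^0$. For the state equation, take any $\b v\in\b V^{\varphi_0}$; extending by zero one can approximate by test functions in $\b V$, and by the strong continuity of $b$ from Lemma~\ref{l:TrilinearFormStrongCont} together with $\alpha_\epsilon(\varphi_\epsilon)\b u_\epsilon\rightharpoonup \b 0$ on $\{\varphi_0=-1\}$, pass to the limit to see that $\b u_0$ satisfies \eqref{e:StatNSSharpConstraintsWeak}, i.e.\ $\b u_0\in\b S_0(\varphi_0)$.

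For the conclusions under \eqref{e:StatNsGammaLimitGrothcondOnMinim}, I would prove the $\Gamma$-liminf inequality $J_0(\varphi_0,\b u_0)\le\liminf_{\epsilon\searrow 0}J_\epsilon(\varphi_\epsilon,\b u_\epsilon)$ from Modica--Mortola (for the perimeter part), weak lower semicontinuity of $F$, and Fatou applied to the $\alpha_\epsilon$-term. For the reverse, I would take an arbitrary competitor $(\tilde\varphi,\tilde{\b u})$ with $\tilde{\b u}\in \b S_0(\tilde\varphi)$ and construct a recovery sequence: $\tilde\varphi_\epsilon$ via the optimal profile, and $\tilde{\b u}_\epsilon\in\b S_\epsilon(\tilde\varphi_\epsilon)$ whose convergence back to $\tilde{\b u}$ rests on an estimate of $\int\alpha_\epsilon(\tilde\varphi_\epsilon)|\tilde{\b u}_\epsilon|^2\dx$ of the type controlled by $\overline{\alpha}_\epsilon\cdot|\{\tilde\varphi_\epsilon<0,\tilde\varphi=1\}|=o(1)$; this is exactly where the hypothesis \eqref{e:StatNsGammaLimitGrothcondOnMinim} (applied to the recovery sequence) together with $\overline{\alpha}_\epsilon=o(\epsilon^{-2/3})$ enters. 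Comparing $J_\epsilon(\varphi_\epsilon,\b u_\epsilon)\le J_\epsilon(\tilde\varphi_\epsilon,\tilde{\b u}_\epsilon)$ and passing to limits yields minimality of $(\varphi_0,\b u_0)$ and \eqref{e:StatNSSharpConvObjFctl}. Finally, the convergence $\int\frac\epsilon 2|\nabla\varphi_\epsilon|^2+\frac1\epsilon\psi(\varphi_\epsilon)\dx\to c_0 P_\Omega(\{\varphi_0=1\})$, combined with $\lim J_\epsilon=J_0$, pins down each energy contribution; in particular $\mu\|\nabla\b u_\epsilon\|^2_{\b L^2}\to\mu\|\nabla\b u_0\|^2_{\b L^2}$, and together with the weak convergence this yields strong convergence $\b u_\epsilon\to\b u_0$ in $\b H^1(\Omega)$.

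The main obstacle I anticipate is the recovery-sequence step: one must build $\tilde{\b u}_\epsilon\in\b S_\epsilon(\tilde\varphi_\epsilon)$ (not merely a nice approximation of $\tilde{\b u}$) and show it converges to $\tilde{\b u}$, which requires a delicate balance between the penalization rate $\overline{\alpha}_\epsilon$ and the measure of the mismatch region between $\tilde\varphi_\epsilon$ and $\tilde\varphi$; the role of condition \eqref{e:StatNsGammaLimitGrothcondOnMinim} and the growth assumption in \ref{a:Alpha} is precisely to make this estimate work and ensure unique solvability via Lemma~\ref{l:EpsilonUniqueStateEquiForSmallU} in a neighbourhood.
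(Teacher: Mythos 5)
Your skeleton (compactness via a recovery sequence for the comparison pair of Assumption~\ref{a:StatNSUniqunessAss}, a liminf inequality via Modica--Mortola and lower semicontinuity, a limsup inequality via recovery sequences, then energy convergence) is the same as the paper's three-step proof, but the proposal leaves exactly the hard steps unresolved and in two places the suggested mechanism would fail. The central gap is the minimality argument: you take an \emph{arbitrary} competitor $(\tilde\varphi,\tilde{\b u})$, $\tilde{\b u}\in\b S_0(\tilde\varphi)$, and need $\tilde{\b u}_\epsilon\in\b S_\epsilon(\tilde\varphi_\epsilon)$ with $\tilde{\b u}_\epsilon\to\tilde{\b u}$. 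For an arbitrary competitor this cannot be done: the energy estimate giving a uniform $\b H^1$ bound must absorb $b(\b w_\epsilon,\tilde{\b u},\b w_\epsilon)$ and therefore needs $\left\|\nabla\tilde{\b u}\right\|_{\b L^2(\Omega)}$ small (condition \eqref{e:StatNSEpsConvCondOnLimitElement}), and one needs $\b S_0(\tilde\varphi)=\{\tilde{\b u}\}$ to identify the limit of $\tilde{\b u}_\epsilon$ -- otherwise the limit may be a different solution of \eqref{e:StatNSSharpConstraintsWeak} with a different value of $F$. The paper first shows $J_0(\varphi_0,\b u_0)\le C_u$ (using the comparison pair from Assumption~\ref{a:StatNSUniqunessAss}) and then restricts, without loss of generality, to competitors with $J_0\le C_u$, so that \eqref{e:SmallnessCondUniqueConclusion} supplies the smallness and, via Lemma~\ref{l:SharpUniqueStateEquiForSmallU}, the uniqueness; the convergence of the state solutions along the recovery sequence is then the content of Lemma~\ref{l:FBEpsilonSharpConvWithoutBound} and Lemma~\ref{l:FBEpsilonSharpConv}, proved by a $\Gamma$-convergence argument for the auxiliary functionals $FP_\epsilon$, not merely by the estimate $\overline\alpha_\epsilon\left|\{\tilde\varphi_\epsilon<0,\,\tilde\varphi=1\}\right|=o(1)$. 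Your closing paragraph names this obstacle but neither performs the reduction that makes it tractable nor supplies the convergence mechanism, and it cites the phase-field uniqueness lemma instead of the sharp one. Note also that \eqref{e:StatNsGammaLimitGrothcondOnMinim} is not what is needed for the recovery sequence (there the $\mathcal O(\epsilon)$ closeness is automatic from the optimal-profile construction); it is needed for the given sequence of \emph{minimizers} in the liminf/identification step.

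Two further steps do not work as written. First, you claim in step (ii), before invoking \eqref{e:StatNsGammaLimitGrothcondOnMinim}, that $\b u_0\in\b S_0(\varphi_0)$ by passing to the limit in the state equation with $\b v\in\b V^{\varphi_0}$, arguing about $\alpha_\epsilon(\varphi_\epsilon)\b u_\epsilon$ on $\{\varphi_0=-1\}$; but such a test function vanishes there anyway -- the critical set is $\{\varphi_0=1,\,\varphi_\epsilon<0\}$, where $\alpha_\epsilon(\varphi_\epsilon)$ may be of order $\overline\alpha_\epsilon$, and the term $\int_\Omega\alpha_\epsilon(\varphi_\epsilon)\b u_\epsilon\cdot\b v\dx$ is only controlled through something like $\overline\alpha_\epsilon\left|\{\varphi_0=1,\varphi_\epsilon<0\}\right|^{2/3}\to0$, i.e. precisely the rate \eqref{e:StatNsGammaLimitGrothcondOnMinim} combined with $\overline\alpha_\epsilon=o\left(\epsilon^{-2/3}\right)$. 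Without the rate the admissibility of the limit is not available, and indeed the theorem does not assert it. Second, your route to strong $\b H^1$ convergence -- ``$\lim_{\epsilon\searrow0}J_\epsilon=J_0$ pins down each energy contribution, in particular $\mu\|\nabla\b u_\epsilon\|_{\b L^2}^2\to\mu\|\nabla\b u_0\|_{\b L^2}^2$'' -- fails for the general objective functional: $J_\epsilon$ contains $F(\b u_\epsilon)$, not the Dirichlet energy, and $F(\b u_\epsilon)\to F(\b u_0)$ gives no norm convergence of gradients (take $f$ independent of $\der\b u$). In the paper the strong convergence is obtained from Lemma~\ref{l:FBEpsilonSharpConv}, i.e. from the $\Gamma$-convergence of the state-equation energies $FP_\epsilon$, in which the Dirichlet term does occur; this lemma, or an equivalent argument, is the missing ingredient of your plan.
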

\begin{remark}
The existence of minimizers $\left(\varphi_\epsilon,\b u_\epsilon\right)$ for the phase field problems $\eqref{e:StatNSGenMinProblemFctl}-\eqref{e:StatNSGenConstraintsWeak}$ for every $\epsilon>0$ follows by Theorem~\ref{l:StatNSDiffuseGenerelFctExistMini}. Thus, using the statement of Theorem~\ref{t:StatNSSharpConvergence}, it follows in particular the existence of a minimizer for the sharp interface problem $\eqref{e:StokesGenMinProblemFctlSharp}-\eqref{e:StatNSSharpConstraintsWeak}$ if $\eqref{e:StatNsGammaLimitGrothcondOnMinim}$ is fulfilled for a sequence of minimizers. This has not been shown so far and is still an open problem for the general shape optimization problem in fluid dynamics, compare also discussion in the introduction and in Remark \ref{r:ExistMinj0}. And so proving a convergence result without any condition as in $\eqref{e:StatNsGammaLimitGrothcondOnMinim}$ would imply a much stronger result concerning well-posedness of the shape optimization problem that is not expected. In this sense, the result at hand seems currently optimal.
\end{remark}

Before proving this theorem we start with two preparatory lemmas.

\begin{lem}\label{l:FBEpsilonSharpConv}
 Let $\left(\varphi_\epsilon\right)_{\epsilon>0}\subseteq L^1(\Omega)$, $\left|\varphi_\epsilon\right|\leq1$ a.e., with $\b u_\epsilon\in\b S_\epsilon\left(\varphi_\epsilon\right)$ for all $\epsilon>0$ be given such that $\lim_{\epsilon\searrow0}\left\|\varphi_\epsilon-\varphi_0\right\|_{L^1(\Omega)}=0$ together with the convergence rate $\eqref{e:StatNsGammaLimitGrothcondOnMinim}$ where $\varphi_0\in BV\left(\Omega,\left\{\pm1\right\}\right)$ and $\b U^{\varphi_0}\neq\emptyset$. Assume moreover $\sup_{\epsilon>0}\|\b u_\epsilon\|_{\b H^1(\Omega)}<\infty$. Then there exists a subsequence of $\left(\varphi_\epsilon,\b u_\epsilon\right)_{\epsilon>0}$ (denoted by the same) and some $\b u_0\in\b S_0\left(\varphi_0\right)$ such that \begin{align}\label{e:StatNSSharpModelHelpLemmaConvRes}\lim_{\epsilon\searrow0}\left\|\b u_\epsilon-\b u_0\right\|_{\b H^1(\Omega)}=0,\quad\lim_{\epsilon\searrow0}\int_\Omega\alpha_\epsilon\left(\varphi_\epsilon\right)\left|\b u_\epsilon\right|^2\dx=0.\end{align}
  
\end{lem}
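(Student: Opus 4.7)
The plan is to adapt the energy estimate used in the proof of Theorem~\ref{l:StatNSDiffuseGenerelFctExistMini} by replacing the Hopf-type extension $\b\psi$ there by a fixed element $\b w\in\b U^{\varphi_0}$, whose existence is guaranteed by hypothesis. Testing $\eqref{e:StatNSGenConstraintsWeak}$ with $\b u_\epsilon-\b w\in\b V$ and using the antisymmetry identities from Lemma~\ref{l:PropertiesTrilinearForm} yields an identity from which $\int_\Omega\alpha_\epsilon(\varphi_\epsilon)|\b u_\epsilon|^2\dx$ can be bounded, modulo uniformly controlled quantities depending on $\|\b u_\epsilon\|_{\b H^1(\Omega)}$, $\|\b w\|_{\b H^1(\Omega)}$ and $\|\b f\|_{\b L^2(\Omega)}$, by $\int_{\{\varphi_0=1\}}\alpha_\epsilon(\varphi_\epsilon)|\b w|^2\dx$, the restriction to the fluid region being due to $\b w=\b 0$ on $\{\varphi_0=-1\}$. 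I would decompose this integral over $A_\epsilon:=\{\varphi_0=1,\varphi_\epsilon\geq 0\}$, where the monotonicity of $\alpha_\epsilon$ and the boundedness $\alpha_\epsilon(0)\leq M$ yield a uniform control, and $B_\epsilon:=\{\varphi_0=1,\varphi_\epsilon<0\}$, whose Lebesgue measure is $\mathcal O(\epsilon)$ by $\eqref{e:StatNsGammaLimitGrothcondOnMinim}$; on $B_\epsilon$, H\"older's inequality combined with the Sobolev embedding $\b H^1(\Omega)\hookrightarrow\b L^6(\Omega)$ and the growth restriction $\overline\alpha_\epsilon=\hbox{o}(\epsilon^{-2/3})$ gives a vanishing contribution. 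This proves $\int_\Omega\alpha_\epsilon(\varphi_\epsilon)|\b u_\epsilon|^2\dx\leq C$ uniformly, and combined with $\sup_\epsilon\|\b u_\epsilon\|_{\b H^1(\Omega)}<\infty$ allows extracting a subsequence with $\b u_\epsilon\rightharpoonup\b u_0$ weakly in $\b H^1(\Omega)$, strongly in $\b L^r(\Omega)$ for $r<2d/(d-2)$, and pointwise almost everywhere.

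Next I would identify the weak limit as $\b u_0\in\b S_0(\varphi_0)$. On $\{\varphi_0=-1\}$ the pointwise blow-up $\alpha_\epsilon(\varphi_\epsilon)\nearrow\alpha_0(-1)=+\infty$ together with the $L^1$-bound from the previous step and Fatou's lemma forces $\b u_0=\b 0$ almost everywhere, so $\b u_0\in\b U^{\varphi_0}$. To check that $\b u_0$ satisfies $\eqref{e:StatNSSharpConstraintsWeak}$, I would insert any test function $\b z\in\b V^{\varphi_0}$ into $\eqref{e:StatNSGenConstraintsWeak}$ and pass to the limit: the linear and convective terms converge by weak convergence and by Lemma~\ref{l:TrilinearFormStrongCont}, while the porous-medium coupling $\int_\Omega\alpha_\epsilon(\varphi_\epsilon)\b u_\epsilon\cdot\b z\dx$ is estimated via Cauchy--Schwarz by $(\int_\Omega\alpha_\epsilon(\varphi_\epsilon)|\b u_\epsilon|^2\dx)^{1/2}(\int_{\{\varphi_0=1\}}\alpha_\epsilon(\varphi_\epsilon)|\b z|^2\dx)^{1/2}$; the second factor vanishes by repeating the splitting argument with $\b z$ in place of $\b w$ and exploiting dominated convergence on $A_\epsilon$ via the pointwise limit $\alpha_\epsilon(\varphi_\epsilon)\to 0$ on $\{\varphi_0=1\}$ (which follows from the continuity of $\alpha_\epsilon$ at $1$ and the convergence $\varphi_\epsilon\to 1$ almost everywhere, using the monotonicity in $\epsilon$).

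Finally, to upgrade to strong $\b H^1$-convergence and to establish $\int_\Omega\alpha_\epsilon(\varphi_\epsilon)|\b u_\epsilon|^2\dx\to 0$, I would test $\eqref{e:StatNSGenConstraintsWeak}$ with the admissible function $\b u_\epsilon-\b u_0\in\b V$ (admissible thanks to $\b u_0\in\b U^{\varphi_0}$), use the identity $b(\b u_\epsilon,\b u_\epsilon,\b u_\epsilon-\b u_0)=b(\b u_\epsilon,\b u_0,\b u_\epsilon-\b u_0)$ to linearise the convective contribution, and compare with the limit equation $\eqref{e:StatNSSharpConstraintsWeak}$ paired against comparable auxiliary test functions in $\b V^{\varphi_0}$. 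Combined with the weak convergence, the uniform bound $\int_\Omega\alpha_\epsilon(\varphi_\epsilon)|\b u_\epsilon|^2\dx\leq C$, and the vanishing of the coupling against fixed test functions, this produces
\[\int_\Omega\alpha_\epsilon(\varphi_\epsilon)|\b u_\epsilon|^2\dx+\mu\|\nabla(\b u_\epsilon-\b u_0)\|_{\b L^2(\Omega)}^2\xrightarrow{\epsilon\searrow 0}0,\]
from which both claims of the lemma follow at once. The main obstacle throughout is the handling of the penalisation term $\alpha_\epsilon(\varphi_\epsilon)|\cdot|^2$: it rests on the delicate interplay between the rate condition $\eqref{e:StatNsGammaLimitGrothcondOnMinim}$ controlling the measure of the sign-disagreement set $B_\epsilon$, the growth restriction $\overline\alpha_\epsilon=\hbox{o}(\epsilon^{-2/3})$, the Sobolev integrability afforded by $\b H^1\hookrightarrow\b L^6$, and the boundedness $\alpha_\epsilon(0)\leq M$ which tames $\alpha_\epsilon(\varphi_\epsilon)$ on the ``good'' set $A_\epsilon$.
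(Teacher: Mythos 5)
Your overall route is viable but genuinely different from the paper's. The paper observes that each $\b u_\epsilon$ is the unique minimizer of the frozen-convection quadratic functional $FP_\epsilon(\b v)=\int_\Omega\bigl(\tfrac12\alpha_\epsilon(\varphi_\epsilon)|\b v|^2+\tfrac\mu2|\nabla\b v|^2+\b u_\epsilon\cdot\nabla\b u_\epsilon\cdot\b v-\b f\cdot\b v\bigr)\dx$ and proves $\Gamma$-convergence of $FP_\epsilon$ to the corresponding limit functional $FP_0$ in $\b U$ with the weak $\b H^1$-topology; membership $\b u_0\in\b S_0(\varphi_0)$ then comes from the first-order conditions of the convex limit problem, and the convergence of energies supplied by $\Gamma$-convergence yields both limits in $\eqref{e:StatNSSharpModelHelpLemmaConvRes}$. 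Your proposal replaces this by direct PDE/energy arguments: an a priori bound on the penalization term by testing with $\b u_\epsilon-\b w$, identification of the limit via Fatou on $\{\varphi_0=-1\}$ and passage to the limit against $\b z\in\b V^{\varphi_0}$ (Lemma~\ref{l:TrilinearFormStrongCont} handles the convective term), and finally testing the $\epsilon$-equation with $\b u_\epsilon-\b u_0$, which indeed gives $\int_\Omega\alpha_\epsilon(\varphi_\epsilon)|\b u_\epsilon|^2\dx+\mu\|\nabla(\b u_\epsilon-\b u_0)\|_{\b L^2(\Omega)}^2\to0$ in one stroke (with Poincar\'e for the full $\b H^1$-norm); this last computation is essentially a de-abstracted version of the paper's energy-convergence step and is fine. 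Both proofs hinge on the same two ingredients: the $B_\epsilon$-estimate from the rate $\eqref{e:StatNsGammaLimitGrothcondOnMinim}$ together with $\overline\alpha_\epsilon=\hbox{o}(\epsilon^{-2/3})$, and the vanishing of $\int_\Omega\alpha_\epsilon(\varphi_\epsilon)|\b v|^2\dx$ for fixed $\b v$ supported in the limiting fluid region, which the paper imports as $\eqref{e:TermGeosToZero}$ from \cite{GarckeHechtStokes}.

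The one step whose justification does not hold up is precisely your treatment of the good set $A_\epsilon$: you claim $\alpha_\epsilon(\varphi_\epsilon)\to0$ a.e.\ on $\{\varphi_0=1\}$ ``from the continuity of $\alpha_\epsilon$ at $1$ and $\varphi_\epsilon\to1$ a.e., using the monotonicity in $\epsilon$''. Continuity of each individual $\alpha_\epsilon$ gives no uniformity as $\epsilon\searrow0$, and the monotonicity $\alpha_\delta\geq\alpha_\epsilon$ for $\delta\leq\epsilon$ runs the wrong way, producing lower rather than upper bounds. Nothing in Assumption~\ref{a:Alpha} prevents profiles such as $\alpha_\epsilon(s)=\min\{\alpha_\epsilon(0),\,\alpha_\epsilon(0)(1-s)\epsilon^{-10}\}$ on $[0,1]$ (completed arbitrarily on $[-1,0]$ up to $\overline\alpha_\epsilon$), which satisfy all the stated hypotheses while $\alpha_\epsilon(\varphi_\epsilon(x))\equiv\alpha_\epsilon(0)\not\to0$ along $\varphi_\epsilon(x)=1-\epsilon^{10}\to1$; the point is that the pointwise limit $\alpha_0$ need not be continuous at $s=1$, and dominated convergence on $A_\epsilon$ requires exactly $\lim_{s\to1^-}\alpha_0(s)=0$ (then $\alpha_\epsilon(\varphi_\epsilon)\leq\alpha_0(\varphi_\epsilon)\to0$), which the assumptions listed here do not supply. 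Note that your Fatou argument on $\{\varphi_0=-1\}$ is unaffected, since there monotonicity does help: $\alpha_\epsilon(\varphi_\epsilon(x))\geq\alpha_{\epsilon_0}(\varphi_\epsilon(x))\to\overline\alpha_{\epsilon_0}$ for each fixed $\epsilon_0$. The clean repair is to invoke $\eqref{e:TermGeosToZero}$, i.e.\ \cite[Lemma 2]{GarckeHechtStokes}, for $\b w$, for the test functions $\b z\in\b V^{\varphi_0}$ and for $\b u_0$, exactly as the paper does, rather than rederive it from continuity of $\alpha_\epsilon$; with that substitution your argument goes through.
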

\begin{proof}
	We skip some details which can be found in \cite[Lemma 3]{GarckeHechtStokes} and mainly point out the differences that occur when dealing with the nonlinearity in the state equation.\\
	
	We start by choosing a subsequence of $\left(\varphi_\epsilon\right)_{\epsilon>0}$ that converges pointwise almost everywhere to $\varphi_0$ in $\Omega$. Then we get as in \cite{GarckeHechtStokes} that it holds $\lim_{\epsilon\searrow0}\alpha_\epsilon\left(\varphi_\epsilon\left(x\right)\right)=\alpha_0\left(\varphi_0\left(x\right)\right)$ for a.e. $x\in\Omega$. Moreover, we see as in \cite[Lemma 2]{GarckeHechtStokes} that we can deduce 
	\begin{align}\label{e:TermGeosToZero}\lim_{\epsilon\searrow0}\int_\Omega\alpha_\epsilon\left(\varphi_\epsilon\right)\left|\b v\right|^2\dx=0\quad\forall\b v\in\b H^1(\Omega),\b v|_{\Omega\setminus E^{\varphi_0}}=\b 0\end{align}
	from the convergence rate given by $\eqref{e:StatNsGammaLimitGrothcondOnMinim}$ and the convergence rate on $\alpha_\epsilon$ given by Assumption~\ref{a:Alpha}.\\
Next we notice that $\b u_\epsilon\in\b U$ are for all $\epsilon>0$ the unique solutions of
	$$\min_{\b v\in\b U}FP_\epsilon\left(\b v\right):=\int_\Omega\left(\frac12\alpha_\epsilon\left(\varphi_\epsilon\right)\left|\b v\right|^2+\frac\mu2\left|\nabla\b v\right|^2+\b u_\epsilon\cdot\nabla\b u_\epsilon\cdot\b v-\b f\cdot\b v\right)\dx$$
	since the state equations $\eqref{e:StatNSGenConstraintsWeak}$ are the necessary and sufficient first order optimality conditions for these optimization problems. \\
	From the boundedness of $\left(\b u_\epsilon\right)_{\epsilon>0}$ in $\b H^1(\Omega)$ we can find a subsequence that converges weakly in $\b H^1(\Omega)$ and pointwise almost everywhere to some limit element $\b u_0\in\b U$ as $\epsilon\searrow0$.\\	
We then define $FP_0:\b H^1(\Omega)\to\overline\R$ by
	$$FP_0\left(\b v\right):=\int_\Omega\left(\frac12\alpha_0\left(\varphi_0\right)\left|\b v\right|^2+\b u_0\cdot\nabla\b u_0\cdot\b v+\frac\mu2\left|\nabla\b v\right|^2-\b f\cdot\b v\right)\dx$$
	
	\noindent and claim, that $\left(FP_\epsilon\right)_{\epsilon>0}$ $\Gamma$-converges to $FP_0$ as $\epsilon\searrow0$ in $\b U$ equipped with the weak $\b H^1(\Omega)$ topology. We notice particularly that $FP_0\not\equiv\infty$ as $\b U^{\varphi_0}\neq\emptyset$.\\	
	Using the continuity properties of the trilinear form $b$ (compare Lemma~\ref{l:TrilinearFormStrongCont}) we get from $\eqref{e:TermGeosToZero}$ with similar arguments as in \cite{GarckeHechtStokes} that for any $\b v\in\b U$ it holds
	$\limsup_{\epsilon\searrow0}FP_\epsilon\left(\b v\right)\leq FP_0\left(\b v\right)$. Thus the constant sequence defines a recovery sequence.\\
	For showing the lower semicontinuity condition, let $\left(\b v_\epsilon\right)_{\epsilon>0}\subseteq\b U$ be an arbitrary sequence that converges weakly in $\b H^1(\Omega)$ to some $\b v\in\b U$. By using similar ideas as in Lemma \ref{l:TrilinearFormStrongCont} we can establish $
		\lim_{\epsilon\searrow0}\left|b\left(\b u_\epsilon,\b u_\epsilon,\b v_\epsilon\right)-b\left(\b u_0,\b u_0,\b v\right)\right|=0$.	The remaining terms can be considered as in \cite{GarckeHechtStokes} and we obtain
 	$FP_0(\b v)\leq\liminf_{\epsilon\searrow0}FP_\epsilon\left(\b v_\epsilon\right)$
 	which proves that $\left(FP_\epsilon\right)_{\epsilon>0}$ $\Gamma$-converges to $FP_0$ as $\epsilon\searrow0$ in $\b U$ equipped with the weak $\b H^1(\Omega)$-topology.\\
 	Applying standard results on $\Gamma$-convergence, see for instance \cite{dalmaso}, we can conclude that $\left(\b u_\epsilon\right)_{\epsilon>0}$ is converging weakly in $\b H^1(\Omega)$ to the unique minimizer of $FP_0$, which implies that $\b u_0$ minimizes $FP_0$. But, considering the necessary and sufficient first order optimality conditions for this convex optimization problem, this implies that $\b u_0$ fulfills the state equation $\eqref{e:StatNSSharpConstraintsWeak}$ and this implies $\b u_0\in\b S_0\left(\varphi_0\right)$.\\
 	Besides, the $\Gamma$-convergence result gives then $\lim_{\epsilon\searrow0}FP_\epsilon\left(\b u_\epsilon\right)=FP_0(\b u_0)$. As one can show $\lim_{\epsilon\searrow0}b\left(\b u_\epsilon,\b u_\epsilon,\b u_\epsilon\right)=b\left(\b u_0,\b u_0,\b u_0\right)$ we get therefrom the convergences $\eqref{e:StatNSSharpModelHelpLemmaConvRes}$. This proves the lemma.
 	
\end{proof}

We state another variant of this lemma, where the uniform bound on the velocities is not part of the assumption, but instead we have more information about the limit element of the phase field variables:

\begin{lem}\label{l:FBEpsilonSharpConvWithoutBound}
 Let $\left(\varphi_\epsilon\right)_{\epsilon>0}\subseteq L^1(\Omega)$ and $\varphi_0\in BV(\Omega,\{\pm1\})$ be as in Lemma~\ref{l:FBEpsilonSharpConv} and $\b u_\epsilon\in\b S_\epsilon(\varphi_\epsilon)$. But instead of the uniform bound on $(\b u_\epsilon)_{\epsilon>0}$ assume that there exists some $\b u\in\b U^{\varphi_0}$ and a constant $0<\overline c<\mu$ fulfilling
   \begin{align}\label{e:StatNSEpsConvCondOnLimitElement}\left|\int_\Omega\b v\cdot\nabla\b u\cdot\b v\dx\right|\leq \overline c\left\|\nabla\b v\right\|_{\b L^2(\Omega)}^2\quad\forall\b v\in\b H^1_0(\Omega).\end{align}
 
  Then there exists a subsequence of $\left(\varphi_\epsilon,\b u_\epsilon\right)_{\epsilon>0}$ (denoted by the same) and some $\b u_0\in\b S_0\left(\varphi_0\right)$, such that $\eqref{e:StatNSSharpModelHelpLemmaConvRes}$ is fulfilled.
\end{lem}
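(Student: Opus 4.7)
The plan is to reduce this lemma to Lemma~\ref{l:FBEpsilonSharpConv} by deriving the missing uniform $\b H^1(\Omega)$-bound on $\left(\b u_\epsilon\right)_{\epsilon>0}$ from the hypothesis $\eqref{e:StatNSEpsConvCondOnLimitElement}$ on the given comparison element $\b u\in\b U^{\varphi_0}$. The usual strategy via the solenoidal extension $\b\psi$ from Lemma~\ref{l:StatNSExistSolVF} fails here since the resulting bound $\eqref{e:EpsilonMinExistUniformBoundUResult}$ depends quadratically on $\overline\alpha_\epsilon$, which blows up as $\epsilon\searrow 0$. The key observation is that $\b u$, precisely because $\b u|_{\{\varphi_0=-1\}}=\b 0$, is a much better ``lifting'' of the boundary data with respect to the porous-medium term: exactly as in $\eqref{e:TermGeosToZero}$, the rate $\eqref{e:StatNsGammaLimitGrothcondOnMinim}$ together with Assumption~\ref{a:Alpha} implies $\int_\Omega\alpha_\epsilon(\varphi_\epsilon)|\b u|^2\dx\to 0$ and in particular uniform boundedness.

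First I would set $\hat{\b u}_\epsilon:=\b u_\epsilon-\b u\in\b V$, which is an admissible test function in $\eqref{e:StatNSGenConstraintsWeak}$, and expand the resulting identity using trilinearity of $b$. Thanks to $\eqref{e:TrilinearformLastTwoEqualZero}$ both $b(\hat{\b u}_\epsilon,\hat{\b u}_\epsilon,\hat{\b u}_\epsilon)$ and $b(\b u,\hat{\b u}_\epsilon,\hat{\b u}_\epsilon)$ vanish, so from the nonlinear contribution only $b(\hat{\b u}_\epsilon,\b u,\hat{\b u}_\epsilon)+b(\b u,\b u,\hat{\b u}_\epsilon)$ survives. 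Exactly the hypothesis $\eqref{e:StatNSEpsConvCondOnLimitElement}$ bounds the former by $\overline c\,\|\nabla\hat{\b u}_\epsilon\|_{\b L^2(\Omega)}^2$, so after absorbing it on the left one is left with a coercive factor $(\mu-\overline c)\,\|\nabla\hat{\b u}_\epsilon\|_{\b L^2(\Omega)}^2$ with $\mu-\overline c>0$; this is the analogue of the absorption estimate $\eqref{e:EpsilonMinExistUniformBoundUReformulate}$, but now genuinely uniform in $\epsilon$.

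Next I would control the remaining right-hand side. The problematic porous-medium contribution $\int_\Omega\alpha_\epsilon(\varphi_\epsilon)\b u\cdot\hat{\b u}_\epsilon\dx$ is split by Young's inequality into $\tfrac12\int_\Omega\alpha_\epsilon(\varphi_\epsilon)|\hat{\b u}_\epsilon|^2\dx$ (absorbed on the left, where $\alpha_\epsilon\geq 0$) and $\tfrac12\int_\Omega\alpha_\epsilon(\varphi_\epsilon)|\b u|^2\dx$, which is uniformly bounded in $\epsilon$ by the argument that yielded $\eqref{e:TermGeosToZero}$. The remaining linear and convective terms involving $\b u$ and $\b f$ are handled by Poincar\'e's inequality and the continuity bound from Lemma~\ref{l:PropertiesTrilinearForm}; Young's inequality absorbs the resulting $\|\nabla\hat{\b u}_\epsilon\|_{\b L^2(\Omega)}$-factors into a small fraction of the coercive term. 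This yields $\sup_{\epsilon>0}\|\hat{\b u}_\epsilon\|_{\b H^1(\Omega)}<\infty$ and hence $\sup_{\epsilon>0}\|\b u_\epsilon\|_{\b H^1(\Omega)}<\infty$. With this uniform bound the hypotheses of Lemma~\ref{l:FBEpsilonSharpConv} are satisfied and its conclusion is exactly $\eqref{e:StatNSSharpModelHelpLemmaConvRes}$. The main obstacle is the porous-medium term $\int_\Omega\alpha_\epsilon(\varphi_\epsilon)\b u\cdot\hat{\b u}_\epsilon\dx$, but controlling it is precisely where the hypothesis $\b u\in\b U^{\varphi_0}$ combined with $\eqref{e:StatNsGammaLimitGrothcondOnMinim}$ pays off.
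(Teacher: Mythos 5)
Your proposal is correct and follows essentially the same route as the paper: testing the state equation with $\b w_\epsilon=\b u_\epsilon-\b u$, absorbing $b(\b w_\epsilon,\b u,\b w_\epsilon)$ via $\eqref{e:StatNSEpsConvCondOnLimitElement}$, handling the porous-medium term by Young's inequality together with $\limsup_{\epsilon\searrow0}\int_\Omega\alpha_\epsilon(\varphi_\epsilon)|\b u|^2\dx=0$, and then invoking Lemma~\ref{l:FBEpsilonSharpConv}. No substantive differences.
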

\begin{proof}
	We want to apply Lemma~\ref{l:FBEpsilonSharpConv} and thus have to show that there exists a uniform bound on $\left\|\b u_\epsilon\right\|_{\b H^1(\Omega)}$. To do this, let $\b u\in\b U^{\varphi_0}$ be chosen such that $\eqref{e:StatNSEpsConvCondOnLimitElement}$ is fulfilled. We obtain from the state equations $\eqref{e:StatNSGenConstraintsWeak}$, written for $\b u_\epsilon\in\b S_\epsilon\left(\varphi_\epsilon\right)$, that for $\b w_\epsilon:=\b u_\epsilon-\b u\in\b V$ it holds
	
	\begin{align*}
		\int_\Omega\alpha_\epsilon\left(\varphi_\epsilon\right)\b w_\epsilon\cdot\b v+\mu\nabla\b w_\epsilon\cdot\nabla\b v\dx+b\left(\b w_\epsilon,\b w_\epsilon,\b v\right)+b\left(\b w_\epsilon,\b u,\b v\right)+b\left(\b u,\b w_\epsilon,\b v\right)=\\
		=\int_\Omega\b f\cdot\b v-\alpha_\epsilon\left(\varphi_\epsilon\right)\b u\cdot\b v-\mu\nabla\b u\cdot\nabla\b v\dx-b\left(\b u,\b u,\b v\right)\quad\forall\b v\in\b V.
	\end{align*}
	
	We can insert $\b w_\epsilon\in\b V$ as a test function into this equation and obtain with similar calculations as in \cite[Theorem IX.4.1]{galdi}
	
	\begin{align*}
	&\int_\Omega\alpha_\epsilon\left(\varphi_\epsilon\right)\left|\b w_\epsilon\right|^2\dx+\mu\left\|\nabla\b w_\epsilon\right\|_{\b L^2(\Omega)}^2+b\left(\b w_\epsilon,\b u,\b w_\epsilon\right)=\\
	&=\int_\Omega\b f\cdot\b w_\epsilon-\alpha_\epsilon\left(\varphi_\epsilon\right)\b u\cdot\b w_\epsilon-\mu\nabla\b u\cdot\nabla\b w_\epsilon\dx+b\left(\b u,\b u,\b w_\epsilon\right).
	\end{align*}
	Applying the inequalities of Young, H\"older and Poincar\'e this gives with $\eqref{e:StatNSEpsConvCondOnLimitElement}$
	
	\begin{align*}
	&\int_\Omega\alpha_\epsilon\left(\varphi_\epsilon\right)\left|\b w_\epsilon\right|^2\dx+\mu\left\|\nabla\b w_\epsilon\right\|_{\b L^2(\Omega)}^2\leq\left\|\b f\right\|_{\b L^2(\Omega)}\left\|\b w_\epsilon\right\|_{\b L^2(\Omega)}+ \frac12\int_\Omega\alpha_\epsilon\left(\varphi_\epsilon\right)\left|\b w_\epsilon\right|^2\dx+\\
&+\left(\limsup_{\epsilon\searrow0}\frac12\int_\Omega\alpha_\epsilon\left(\varphi_\epsilon\right)\left|\b u\right|^2\dx+c\right)+\mu\left\|\nabla\b u\right\|_{\b L^2(\Omega)}\left\|\nabla\b w_\epsilon\right\|_{\b L^2(\Omega)}+\\
&+C\left\|\b u\right\|_{\b H^1(\Omega)}^2\left\|\nabla\b w_\epsilon\right\|_{\b L^2(\Omega)}+\overline c\left\|\nabla\b w_\epsilon\right\|_{\b L^2(\Omega)}^2
	\end{align*}
	which holds for $C, c\geq 0$ independent of $\epsilon$ and $\epsilon>0$ small enough.\\
	Thus we get, after applying Young's inequality, a constant $C>0$ independent of $\epsilon>0$, such that
\begin{equation}\begin{split}\label{e:StatNSSharpModelEstimateIrgendwasMitIrgendwasFPAbschaetzungen2}
&\int_\Omega\alpha_\epsilon\left(\varphi_\epsilon\right)\left|\b u_\epsilon\right|^2+\left\|\nabla\b u_\epsilon\right\|_{\b L^2(\Omega)}^2\leq C\left(\limsup_{\epsilon\searrow0}\frac12\int_\Omega\alpha_\epsilon\left(\varphi_\epsilon\right)\left|\b u\right|^2\dx+1\right)
\end{split}\end{equation}
for all $\epsilon>0$ small enough. Using the considerations of \cite[Lemma 3]{GarckeHechtStokes} we find \linebreak[4]$\limsup_{\epsilon\searrow0}\int_\Omega\alpha_\epsilon\left(\varphi_\epsilon\right)\left|\b u\right|^2\dx=0$ and so we can deduce from $\eqref{e:StatNSSharpModelEstimateIrgendwasMitIrgendwasFPAbschaetzungen2}$ and Poincar\'e's inequality that there exists a constant $C>0$ independent of $\epsilon$ such that $\left\|\b u_\epsilon\right\|_{\b H^1(\Omega)}<C.$ We can now complete the proof by applying Lemma~\ref{l:FBEpsilonSharpConv}.
	
\end{proof}

Finally, we can show Theorem~\ref{t:StatNSSharpConvergence}:

\begin{proof}[Proof of Theorem~\ref{t:StatNSSharpConvergence}]
We split the proof into several steps and use the ideas of the proof of \cite[Theorem 2]{GarckeHechtStokes}.

\begin{itemize}

\item \textit{1st step:} Assume that $\left(\varphi,\b u\right)\in L^1(\Omega)\times\b H^1(\Omega)$ is an arbitrary pair such that
$J_0\left(\varphi,\b u\right)\leq C_u$ and thus, due to $\eqref{e:SmallnessCondUniqueConclusion}$, in particular
\begin{align}\label{e:J0NMinStatNSProofSmallnesCons}\left\|\nabla\b u\right\|_{\b L^2(\Omega)}\leq\frac{\mu}{2K_\Omega}.\end{align} We follow the construction of \cite[1st step, Proof of Theorem 2]{GarckeHechtStokes} to obtain a sequence $\left(\varphi_\epsilon\right)_{\epsilon>0}\subset\Phi_{ad}$ such that 
\begin{align}\label{e:StokesGammaConvLimSupEpsilonK}\limsup_{\epsilon\searrow0}\int_\Omega\left(\frac{\gamma\epsilon}{2}\left|\nabla\varphi_\epsilon\right|^2+\frac\gamma\epsilon\psi\left(\varphi_\epsilon\right)\right)\dx\leq\gamma c_0P_\Omega\left(\{\varphi=1\}\right)\end{align}
analog as it is done for example in \cite[p. 222 ff]{sternberg}, \cite[Proposition 2]{modica} or \cite[Proposition 3.11]{blowey_elliot}. From this we obtain in particular $\|\varphi_\epsilon-\varphi\|_{L^1(\Omega)}=\mathcal O(\epsilon)$.	Then we choose some $\b u_\epsilon\in\b S_{\epsilon}\left(\varphi_{\epsilon}\right)$. By using $\eqref{e:ContinuityEstimateTrilinearForm}$, we observe that $\eqref{e:J0NMinStatNSProofSmallnesCons}$ implies $\eqref{e:StatNSEpsConvCondOnLimitElement}$ and so we can apply Lemma~\ref{l:FBEpsilonSharpConvWithoutBound} to find that, after possible choosing a subsequence, $\left(\b u_\epsilon\right)_{\epsilon>0}$ converges strongly in $\b H^1(\Omega)$ to some $\b u_0\in\b S_0(\varphi)=\left\{\b u\right\}$, thus $\b u_0\equiv\b u$, and it holds
$\lim_{\epsilon\searrow0}\int_\Omega\alpha_{\epsilon}\left(\varphi_{\epsilon}\right)\left|\b u_\epsilon\right|^2\dx=0.$ Using the continuity of the objective functional we end up with
$$\limsup_{\epsilon\searrow0}J_{\epsilon}(\varphi_{\epsilon},\b u_\epsilon)\leq J_0(\varphi,\b u).$$

\item\textit{2nd step:} Next we will show that for any sequence $\left(\varphi_\epsilon,\b u_\epsilon\right)_{\epsilon>0}\subseteq L^1(\Omega)\times\b H^1(\Omega)$ such that $\left(\varphi_\epsilon\right)_{\epsilon>0}$ converges strongly in $L^1(\Omega)$ to some $\varphi\in L^1(\Omega)$ fulfilling

\begin{align}\label{e:StatNsShaprConvL1InProofconvRate}\left\|\varphi_\epsilon-\varphi\right\|_{L^1\left(\left\{x\in\Omega\mid \varphi_0(x)=1,\varphi_\epsilon(x)<0\right\}\right)}=\mathcal O\left(\epsilon\right)
	\end{align}
	
and $\left(\b u_\epsilon\right)_{\epsilon>0}$ converges weakly in $\b H^1(\Omega)$ to some $\b u\in\b H^1(\Omega)$ it holds
$$J_0(\varphi,\b u)\leq\liminf_{\epsilon\searrow0}J_\epsilon\left(\varphi_\epsilon,\b u_\epsilon\right).$$

Without loss of generality we assume $\liminf_{\epsilon\searrow0} J_\epsilon(\varphi_\epsilon,\b u_\epsilon)<\infty$ and $\varphi\in BV\left(\Omega,\left\{\pm1\right\}\right)$ with $\int_\Omega\varphi\dx\leq\beta|\Omega|$.\\
We can assume that (after choosing a subsequence) $\left(\varphi_\epsilon\right)_{\epsilon>0}$ and $\left(\b u_\epsilon\right)_{\epsilon>0}$ converge pointwise almost everywhere in $\Omega$, and thus using Fatou's Lemma, we see
$$\int_\Omega\alpha_0\left(\varphi\right)\left|\b u\right|^2\dx\leq\liminf_{\epsilon\searrow0}\int_\Omega\alpha_\epsilon\left(\varphi_\epsilon\right)\left|\b u_\epsilon\right|^2\dx<\infty$$
and so in particular $\b u=\b0$ a.e. in $\Omega\setminus E^\varphi$. Thanks to $\b u_\epsilon\in\b S_\epsilon\left(\varphi_\epsilon\right)$ we see $\b u_\epsilon\in\b U$ for all $\epsilon>0$ and deduce $\b u\in\b U$. Altogether this implies $\b u\in\b U^\varphi$ and thus $\b U^\varphi\neq\emptyset$.\\
	Using \cite[Proposition 1]{modica} we get after rescaling in $\epsilon$ that
			$$\gamma c_0P_\Omega\left(\{\varphi=1\}\right)\leq\liminf_{\epsilon\searrow0}\int_\Omega\left(\frac{\gamma\epsilon}{2}\left|\nabla\varphi_\epsilon\right|^2+\frac\gamma\epsilon\psi\left(\varphi_\epsilon\right)\right)\dx.$$
			We choose then a subsequence $\left(J_{\epsilon_k}\left(\varphi_{\epsilon_k},\b u_{\epsilon_k}\right)\right)_{k\in\N}$ such that
			$\lim_{k\to\infty}J_{\epsilon_k}\left(\varphi_{\epsilon_k},\b u_{\epsilon_k}\right)=\liminf_{\epsilon\searrow0}J_\epsilon\left(\varphi_\epsilon,\b u_\epsilon\right).$			With the help of the convergence rate on $(\varphi_\epsilon)_{\epsilon>0}$ and using 
			$\sup_{k\in\N}\left\|\b u_{\epsilon_k}\right\|_{\b H^1(\Omega)}<\infty$, which follows from the weak convergence of $\left(\b u_\epsilon\right)_{\epsilon>0}$ in $\b H^1(\Omega)$,	we thus can apply Lemma~\ref{l:FBEpsilonSharpConv} and get a subsequence $\left(J_{\epsilon_{k(l)}}\left(\varphi_{\epsilon_{k(l)}},\b u_{\epsilon_{k(l)}}\right)\right)_{l\in\N}$ such that 
		$$\lim_{l\rightarrow\infty}\left\|\b u_{\epsilon_{k(l)}}-\b u\right\|_{\b H^1\left(\Omega\right)}=0,\quad \lim_{l\rightarrow\infty}\int_\Omega\alpha_{\epsilon_{k(l)}}\left(\varphi_{\epsilon_{k(l)}}\right)\left|\b u_{\epsilon_{k(l)}}\right|^2\dx=0.$$
		Plugging these results together we end up with		
		\begin{align*}
	J_0\left(\varphi,\b u\right)&=\int_\Omega f\left(x,\b u,\der\b u\right)\dx+\gamma c_0P_\Omega\left(\{\varphi=1\}\right)\leq\liminf_{l\rightarrow\infty} J_{\epsilon_{k(l)}}\left(\varphi_{\epsilon_{k(l)}},\b u_{\epsilon_{k(l)}}\right)=\\
			&=\lim_{k\rightarrow\infty}J_{\epsilon_k}\left(\varphi_{\epsilon_k},\b u_{\epsilon_k}\right)=\liminf_{\epsilon\searrow0}J_\epsilon\left(\varphi_\epsilon,\b u_\epsilon\right)\end{align*}
		and finish the second step.\\
		
		\item\textit{3rd step:} Now let $\left(\varphi_\epsilon,\b u_\epsilon\right)_{\epsilon>0}\subseteq L^1(\Omega)\times\b H^1(\Omega)$ be minimizers of $\eqref{e:StatNSGenMinProblemFctl}-\eqref{e:StatNSGenConstraintsWeak}$. By Assumption~\ref{a:StatNSUniqunessAss} we know that there exists some $\left(\widetilde\varphi,\widetilde{\b u}\right)\in \Phi_{ad}^0\times\b U$ with $\widetilde{\b u}\in\b S_0\left(\widetilde\varphi\right)$ and
		\begin{align}\label{e:StatNSSharpConvSmallConvFctJ0N}J_0\left(\widetilde\varphi,\widetilde{\b u}\right)\leq C_u.\end{align}
		This gives in view of $\eqref{e:SmallnessCondUniqueConclusion}$ in particular
		$\left\|\nabla\widetilde{\b u}\right\|_{\b L^2(\Omega)}\leq\frac{\mu}{2K_\Omega}$	and thus by Lemma~\ref{e:ContinuityEstimateTrilinearForm} also $b\left(\b v,\widetilde{\b u},\b v\right)\leq \frac\mu2\left\|\nabla\b v\right\|_{\b L^2(\Omega)}^2$ for all $\b v\in\b H^1_0(\Omega).$ From $\eqref{e:StatNSSharpConvSmallConvFctJ0N}$ we find that we can apply the third part of this proof and obtain a sequence $\left(\widetilde\varphi_\epsilon,\widetilde{\b u}_\epsilon\right)_{\epsilon>0}\subset L^1(\Omega)\times\b H^1(\Omega)$ converging in $L^1(\Omega)\times\b H^1(\Omega)$ to $\left(\widetilde\varphi,\widetilde{\b u}\right)$ such that
		
		
		\begin{align}\label{e:StatNSSHarpConvHelpFct1}\limsup_{\epsilon\searrow0}J_\epsilon\left(\widetilde\varphi_\epsilon,\widetilde{\b u}_\epsilon\right)\leq J_0\left(\widetilde\varphi,\widetilde{\b u}\right)\leq C_u\end{align}
		and in particular $\sup_{\epsilon>0} J_\epsilon\left(\widetilde\varphi_\epsilon,\widetilde{\b u}_\epsilon\right)<\infty.$	From the fact that $\left(\varphi_\epsilon, \b u_\epsilon\right)$ minimize $J_\epsilon$ for every $\epsilon>0$ we know that 
	\begin{align}\label{e:StatNSSharpConvJNEPsilonMinBoundedUniform}
	&J_\epsilon\left(\varphi_\epsilon,\b u_\epsilon\right)\leq J_\epsilon\left(\widetilde\varphi_\epsilon,\widetilde{\b u}_\epsilon\right)<C\end{align}
	where $C>0$ is a constant independent of $\epsilon>0$. Therefrom
		\begin{align}\label{e:StatNSConvResultNormGLEpsilonBound}\sup_{\epsilon>0}\int_\Omega\left(\frac{\gamma\epsilon}{2}\left|\nabla\varphi_\epsilon\right|^2+\frac\gamma\epsilon\psi\left(\varphi_\epsilon\right)\right)\dx<\infty\end{align}
	and by Assumption~\ref{a:RadiallyUnbounded} also
	\begin{align}\label{e:StatNSConvResultNormUEpsilonBound}\sup_{\epsilon>0}\left\|\b u_\epsilon\right\|_{\b H^1(\Omega)}<\infty.\end{align}
	
	Now using the arguments of \cite[Proposition 3, case a)]{modica} we get from  $\eqref{e:StatNSConvResultNormGLEpsilonBound}$ that $\left(\varphi_\epsilon\right)_{\epsilon>0}$ has a subsequence, denoted by the same, that converges in $L^1(\Omega)$ to an element $\varphi_0\in L^1(\Omega)$. Besides, we find that $\left(\b u_\epsilon\right)_{\epsilon>0}$ has a subsequence that converges weakly in $\b H^1(\Omega)$ to some $\b u_0\in\b U$.\\
	If we assume, that the sequence of minimizers fulfills the convergence rate $\eqref{e:StatNsGammaLimitGrothcondOnMinim}$ we see from the second step of this proof, that it holds
	\begin{align}\label{e:StatNSSHarpConvHelpFct2}J_0\left(\varphi_0,\b u_0\right)\leq\liminf_{\epsilon\searrow0} J_\epsilon\left(\varphi_\epsilon,\b u_\epsilon\right).\end{align}
	
	We want to show, that $\left(\varphi_0,\b u_0\right)$ are a minimizer for $\eqref{e:StokesGenMinProblemFctlSharp}-\eqref{e:StatNSSharpConstraintsWeak}$. For this purpose, let $\left(\varphi,\b u\right)$ be another arbitrary pair. To show that
	$J_0\left(\varphi_0,\b u_0\right)\leq J_0\left(\varphi,\b u\right)$
	we can assume without loss of generality that $J_0\left(\varphi,\b u\right)\leq C_u$, since by $\eqref{e:StatNSSHarpConvHelpFct1}$, $\eqref{e:StatNSSharpConvJNEPsilonMinBoundedUniform}$ and $\eqref{e:StatNSSHarpConvHelpFct2}$ we have
	\begin{align}\label{e:StatNSSHarpConvJ0nCuEstimate}J_0\left(\varphi_0,\b u_0\right)\leq C_u.\end{align}
Consequently, the first step of this proof guarantees the existence of a sequence $\left(\overline\varphi_\epsilon,\overline{\b u}_\epsilon\right)_{\epsilon>0}\subseteq L^1(\Omega)\times\b H^1(\Omega)$ converging to $\left(\varphi,\b u\right)$ in $L^1(\Omega)\times\b H^1(\Omega)$ such that
	$\limsup_{\epsilon\searrow0}J_\epsilon\left(\overline\varphi_\epsilon,\overline{\b u}_\epsilon\right)\leq J_0\left(\varphi,\b u\right).$	Combining those result, we obtain
	
	\begin{align}\label{e:StatNSSharpConvJ0NNochEinEstimateInProof}J_0\left(\varphi_0,\b u_0\right)\leq\liminf_{\epsilon\searrow0} J_\epsilon\left(\varphi_\epsilon,\b u_\epsilon\right)\leq\limsup_{\epsilon\searrow0}J_\epsilon\left(\overline\varphi_\epsilon,\overline{\b u}_\epsilon\right)\leq J_0\left(\varphi,\b u\right)\end{align}
	the second inequality being a consequence of $\left(\varphi_\epsilon,\b u_\epsilon\right)$ minimizing $J_\epsilon$ for every $\epsilon>0$.

	As $\left(\varphi,\b u\right)$ has been arbitrary this implies that $\left(\varphi_0,\b u_0\right)$ is a minimizer of $J_0$.\\
	To deduce the statement of the theorem, it remains to show the strong convergence of $\left(\b u_\epsilon\right)_{\epsilon>0}$ in $\b H^1(\Omega)$ and $\eqref{e:StatNSSharpConvObjFctl}$. For this purpose, we use again $\eqref{e:StatNsGammaLimitGrothcondOnMinim}$ and consequently can apply Lemma~\ref{l:FBEpsilonSharpConv} to deduce that $\left(\b u_\epsilon\right)_{\epsilon>0}$ converges strongly in $\b H^1(\Omega)$ and 
	\begin{align}\label{e:StatNSSHaprConvPenTermZero}\lim_{\epsilon\searrow0}\int_\Omega\alpha_\epsilon\left(\varphi_\epsilon\right)\left|\b u_\epsilon\right|^2\dx=0.\end{align}
	By the first step of this proof and $\eqref{e:StatNSSHarpConvJ0nCuEstimate}$ we find a sequence $\left(\widehat\varphi_\epsilon,\widehat{\b u}_\epsilon\right)_{\epsilon>0}\subset L^1(\Omega)\times\b H^1(\Omega)$ converging to $\left(\varphi_0,\b u_0\right)$ strongly in $L^1(\Omega)\times\b H^1(\Omega)$ such that
$\limsup_{\epsilon\searrow0}J_\epsilon\left(\widehat\varphi_\epsilon,\widehat{\b u}_\epsilon\right)\leq J_0\left(\varphi_0,\b u_0\right).$	Then we see similar to $\eqref{e:StatNSSharpConvJ0NNochEinEstimateInProof}$ by applying $\eqref{e:StatNSSHarpConvHelpFct2}$ that
	\begin{align*}J_0\left(\varphi_0,\b u_0\right)&\leq\liminf_{\epsilon\searrow0}J_\epsilon\left(\varphi_\epsilon,\b u_\epsilon\right)\leq\limsup_{\epsilon\searrow0}J_\epsilon\left(\widehat\varphi_\epsilon,\widehat{\b u}_\epsilon\right)\leq J_0\left(\varphi_0,\b u_0\right)\end{align*}
	and can finally deduce $\eqref{e:StatNSSharpConvObjFctl}$.
\end{itemize}

\end{proof}

Using this result, we can now show that for a minimizer $\left(\varphi_\epsilon,\b u_\epsilon\right)$ of $\eqref{e:StatNSGenMinProblemFctl}-\eqref{e:StatNSGenConstraintsWeak}$ the state equations corresponding to $\varphi_\epsilon$ have a \emph{unique} solution if $\epsilon>0$ is small enough and $\eqref{e:StatNsGammaLimitGrothcondOnMinim}$ is fulfilled, as the following corollary shows:

\begin{cor}\label{c:StatNSUEpsilonUniqueIfEpsilonSmall}
	Assume $\left(\varphi_\epsilon,\b u_\epsilon\right)\in L^1(\Omega)\times\b H^1(\Omega)$ are minimizer of the phase field problems $\eqref{e:StatNSGenMinProblemFctl}-\eqref{e:StatNSGenConstraintsWeak}$ such that $\eqref{e:StatNsGammaLimitGrothcondOnMinim}$ is fulfilled. Then, for $\epsilon>0$ small enough, it holds
	$\b S_\epsilon(\varphi_\epsilon)=\left\{\b u_\epsilon\right\}.$	This means, that the solution of $\eqref{e:StatNSSharpConstraintsWeak}$ corresponding to $\varphi_\epsilon$ is \emph{unique}. Moreover, we have
	$\left\|\nabla\b u_\epsilon\right\|_{\b L^2(\Omega)}<\frac{\mu}{K_\Omega}.$
\end{cor}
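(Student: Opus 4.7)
My plan is to prove the corollary by a subsequence-contradiction argument combining the convergence result of Theorem~\ref{t:StatNSSharpConvergence} with the uniform bound on minimizers of the sharp interface problem from Lemma~\ref{l:PropMinimizerJ0N}, and then to invoke Lemma~\ref{l:EpsilonUniqueStateEquiForSmallU} to conclude uniqueness.

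\textbf{Step 1 (Passage to a convergent subsequence).} Given a sequence of minimizers $(\varphi_\epsilon,\b u_\epsilon)_{\epsilon>0}$ satisfying $\eqref{e:StatNsGammaLimitGrothcondOnMinim}$, Theorem~\ref{t:StatNSSharpConvergence} yields a subsequence (not relabeled) and a limit $(\varphi_0,\b u_0)\in L^1(\Omega)\times\b H^1(\Omega)$ such that $\varphi_\epsilon\to\varphi_0$ in $L^1(\Omega)$ and, crucially, $\b u_\epsilon\to\b u_0$ \emph{strongly} in $\b H^1(\Omega)$. Moreover $(\varphi_0,\b u_0)$ minimizes $\eqref{e:StokesGenMinProblemFctlSharp}$-$\eqref{e:StatNSSharpConstraintsWeak}$, so Lemma~\ref{l:PropMinimizerJ0N} gives
\[
\left\|\nabla\b u_0\right\|_{\b L^2(\Omega)}\leq\frac{\mu}{2K_\Omega}.
\]

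\textbf{Step 2 (Strong convergence passes the smallness bound).} Combining the strong $\b H^1$-convergence $\b u_\epsilon\to\b u_0$ with the bound above, for $\epsilon>0$ small enough along this subsequence one has
\[
\left\|\nabla\b u_\epsilon\right\|_{\b L^2(\Omega)}\leq\left\|\nabla\b u_0\right\|_{\b L^2(\Omega)}+\left\|\nabla(\b u_\epsilon-\b u_0)\right\|_{\b L^2(\Omega)}<\frac{\mu}{2K_\Omega}+\frac{\mu}{4K_\Omega}<\frac{\mu}{K_\Omega}.
\]
By Lemma~\ref{l:EpsilonUniqueStateEquiForSmallU}, this smallness condition $\eqref{e:NecessarySmallnessUForEpsilonUnique}$ immediately yields $\b S_\epsilon(\varphi_\epsilon)=\{\b u_\epsilon\}$ for those $\epsilon$.

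\textbf{Step 3 (Upgrading subsequence to the full sequence).} The only remaining concern is that Theorem~\ref{t:StatNSSharpConvergence} produces a limit only along a subsequence, whereas the corollary claims the conclusion for all sufficiently small $\epsilon$. I would resolve this by a standard contradiction: suppose the conclusion fails, so there exists a sequence $\epsilon_k\searrow 0$ with $\|\nabla\b u_{\epsilon_k}\|_{\b L^2(\Omega)}\geq\mu/K_\Omega$. Applying Theorem~\ref{t:StatNSSharpConvergence} to $(\varphi_{\epsilon_k},\b u_{\epsilon_k})_{k\in\N}$ extracts a further subsequence converging strongly in $\b H^1(\Omega)$ to some minimizer $(\widetilde\varphi,\widetilde{\b u})$ of the sharp interface problem, which by Lemma~\ref{l:PropMinimizerJ0N} satisfies $\|\nabla\widetilde{\b u}\|_{\b L^2(\Omega)}\leq\mu/(2K_\Omega)$. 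Strong convergence then forces $\|\nabla\b u_{\epsilon_k}\|_{\b L^2(\Omega)}\to\|\nabla\widetilde{\b u}\|_{\b L^2(\Omega)}\leq\mu/(2K_\Omega)$ along the subsequence, contradicting the assumed lower bound $\mu/K_\Omega$. This contradiction establishes both $\|\nabla\b u_\epsilon\|_{\b L^2(\Omega)}<\mu/K_\Omega$ and, via Lemma~\ref{l:EpsilonUniqueStateEquiForSmallU}, the uniqueness $\b S_\epsilon(\varphi_\epsilon)=\{\b u_\epsilon\}$ for all sufficiently small $\epsilon>0$.

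The only subtle point is making sure the subsequence extraction works for any candidate sequence of minimizers; this is not really an obstacle since Theorem~\ref{t:StatNSSharpConvergence} applies to any sequence of minimizers satisfying $\eqref{e:StatNsGammaLimitGrothcondOnMinim}$, and the argument only uses the strong $\b H^1$-convergence and the a~priori bound from Lemma~\ref{l:PropMinimizerJ0N}, both of which are already in hand.
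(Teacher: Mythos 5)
Your proposal is correct and takes essentially the same route as the paper's own proof: strong $\b H^1(\Omega)$-convergence of $\b u_\epsilon$ to a sharp-interface minimizer from Theorem~\ref{t:StatNSSharpConvergence}, the bound $\left\|\nabla\b u_0\right\|_{\b L^2(\Omega)}\leq\frac{\mu}{2K_\Omega}$ from Lemma~\ref{l:PropMinimizerJ0N}, a triangle inequality to get $\left\|\nabla\b u_\epsilon\right\|_{\b L^2(\Omega)}<\frac{\mu}{K_\Omega}$, and then Lemma~\ref{l:EpsilonUniqueStateEquiForSmallU}. Your Step~3, which upgrades the subsequence conclusion to all sufficiently small $\epsilon$ by a contradiction argument, addresses a point the paper's proof passes over silently and is a sound refinement rather than a different method.
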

\begin{proof}
	It follows from Theorem~\ref{t:StatNSSharpConvergence}, that
	$\left\|\b u_\epsilon-\b u\right\|_{\b H^1(\Omega)}<\delta$
	for some $0<\delta<\frac{\mu}{2K_\Omega}$, if $\epsilon>0$ is small enough, where $\b u\in\b S_0(\varphi)$ and $\left(\varphi,\b u\right)$ is some minimizer of $\eqref{e:StokesGenMinProblemFctlSharp}-\eqref{e:StatNSSharpConstraintsWeak}$. Due to Lemma~\ref{l:PropMinimizerJ0N} we know that it holds
	$\left\|\nabla\b u\right\|_{\b L^2(\Omega)}\leq\frac{\mu}{2K_\Omega}$
	and hence we have
	$\left\|\nabla\b u_\epsilon\right\|_{\b L^2(\Omega)}<\delta+\left\|\nabla\b u\right\|_{\b L^2(\Omega)}<\frac{\mu}{2K_\Omega}+\frac{\mu}{2K_\Omega}=\frac{\mu}{K_\Omega}$
	and the statement follows from Lemma~\ref{l:EpsilonUniqueStateEquiForSmallU}.
\end{proof}

\subsection{Convergence of the optimality system}\label{s:ConvOptSys}
In Section \ref{s:PhaseFieldOptCond} we have derived a necessary optimality system for the phase field problem by geometric variations. The same has been done for the sharp interface problem in Section \ref{s:SharpOptCond}. In the previous subsection we have connected those two problems by showing that minimizers of the diffuse interface problem converge under certain assumptions to a minimizer of the sharp interface problem. We now complete this picture by showing that also the optimality conditions of the phase field problem can be shown to be an approximation of the derived necessary optimality system in the sharp interface setting. This is the content of the following theorem:

\begin{thm}\label{t:StokesConvOptSys}
Let $\left(\varphi_\epsilon,\b u_\epsilon\right)_{\epsilon>0}$ be the minimizers of the phase field problems $\eqref{e:StatNSGenMinProblemFctl}-\eqref{e:StatNSGenConstraintsWeak}$ as in Theorem~\ref{t:StatNSSharpConvergence}, thus it holds $\lim_{\epsilon\searrow0}\|\varphi_\epsilon-\varphi_0\|_{L^1(\Omega)}=0$ together with the convergence rate $\eqref{e:StatNsGammaLimitGrothcondOnMinim}$ and $\lim_{\epsilon\searrow0}\|\b u_\epsilon-\b u_0\|_{\b H^1(\Omega)}=0$. Then $\left(\varphi_0,\b u_0\right)$ is a minimizer of the sharp interface problem $\eqref{e:StokesGenMinProblemFctlSharp}-\eqref{e:StatNSSharpConstraintsWeak}$ and $\lim_{\epsilon\searrow0}J_\epsilon\left(\varphi_\epsilon,\b u_\epsilon\right)=J_0(\varphi_0,\b u_0)$. Moreover it holds
\begin{equation}\begin{split}\label{e:StokesGenFctDiffVarParResVariationInLimit}
		&\lim_{\epsilon\searrow0}\partial_t|_{t=0}j_\epsilon\left(\varphi_\epsilon\circ T_t^{-1}\right)=\partial_t|_{t=0}j_0\left(\varphi_0\circ T_t^{-1}\right)
\quad\forall  T\in{\mathcal T}_{ad}.	\end{split}\end{equation}
If $\left|\left\{\varphi_0=1\right\}\right|>0$ then we have additionally the following convergence results:
	\begin{subequations}\label{e:ConvOptSysAll}\begin{align}
	\lim_{\epsilon\searrow0}\lambda_\epsilon=\lambda_0,\qquad\lim_{\epsilon\searrow0}\left\|\dot{\b u}_\epsilon\left[V\right]-\dot{\b u}_0\left[V\right]\right\|_{\b H^1(\Omega)}=0 
	\end{align}\end{subequations}
	where $\{\b u_\epsilon\}=\b S_\epsilon(\varphi_\epsilon)$ for $\epsilon$ small enough and $\{\b u_0\}=\b S_0(\varphi_0)$. Moreover, $\left(\lambda_\epsilon\right)_{\epsilon>0}\subseteq\R^+_0$ are Lagrange multipliers for the integral constraint defined due to Theorem~\ref{l:StatNSDiffParVarConvOptCondDistProb}, $\lambda_0\geq0$ is a Lagrange multiplier such that it holds $\eqref{e:StatNSSharpParVarConvOptCondLimitDist}$, and thus is a Lagrange multiplier for the integral constraint in the sharp interface according to Theorem~\ref{t:StatNSSharpParVarConvOptCondDistProb}.
	
\end{thm}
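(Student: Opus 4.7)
The first two claims -- that $(\varphi_0,\b u_0)$ is a sharp interface minimizer and $J_\epsilon\to J_0$ -- are exactly Theorem~\ref{t:StatNSSharpConvergence}, under the hypothesis \eqref{e:StatNsGammaLimitGrothcondOnMinim}. By Corollary~\ref{c:StatNSUEpsilonUniqueIfEpsilonSmall} we have $\b S_\epsilon(\varphi_\epsilon)=\{\b u_\epsilon\}$ for small $\epsilon$, so $j_\epsilon$ is well-defined and its derivative \eqref{e:StatNSGenFctDiffVarParResVariationEpsilon} makes sense; the strategy is then to pass to the limit term by term in \eqref{e:StatNSGenFctDiffVarParResVariationEpsilon} and to compare with \eqref{e:StatNSSharpVarParResVariationEpsilon}.

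The core step is $\dot{\b u}_\epsilon[V]\to\dot{\b u}_0[V]$ in $\b H^1(\Omega)$. To produce uniform bounds on $\|\nabla\dot{\b u}_\epsilon[V]\|_{\b L^2(\Omega)}$ and on $\int_\Omega\alpha_\epsilon(\varphi_\epsilon)|\dot{\b u}_\epsilon[V]|^2\dx$, one subtracts from $\dot{\b u}_\epsilon[V]$ a correction realising the inhomogeneous divergence \eqref{e:StatNSDiffParlVarDivDotBUEpsilonDelta} (obtained via a Bogovski\u\i-type operator and bounded by $\|\nabla\b u_\epsilon\|_{\b L^2}\|\der V(0)\|_{\b L^\infty}$) and tests \eqref{e:StatNSGenDiffVarParDotUDeltaEpsilonEquation} with the resulting solenoidal field; the strict inequality \eqref{e:StatNSOptCondSmallnessUEpsilon} from Corollary~\ref{c:StatNSUEpsilonUniqueIfEpsilonSmall} allows the two linearised convective terms to be absorbed into the viscous one. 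Since $\overline\alpha_\epsilon\to\infty$ and $\varphi_\epsilon\to\varphi_0$ pointwise a.e., the uniform bound on $\int\alpha_\epsilon|\dot{\b u}_\epsilon[V]|^2\dx$ forces every weak $\b H^1$-accumulation point $\widetilde{\b u}$ of $(\dot{\b u}_\epsilon[V])$ to vanish a.e.\ on $\{\varphi_0=-1\}$ by Fatou. Passing to the limit in \eqref{e:StatNSGenDiffVarParDotUDeltaEpsilonEquation}-\eqref{e:StatNSDiffParlVarDivDotBUEpsilonDelta} using the strong $\b H^1$-convergence of $\b u_\epsilon$, the strong continuity of $b$ from Lemma~\ref{l:TrilinearFormStrongCont}, and the $\alpha_\epsilon$-treatment of Lemma~\ref{l:FBEpsilonSharpConv}, shows that $\widetilde{\b u}$ solves \eqref{e:StatNSApproxParallelEquUDot}-\eqref{e:StatNSSharpParlVarDivDotBUEpsilonDelta}; uniqueness (implicit in the proof of Lemma~\ref{l:StatNSSharpParVarUTCIrcTTDiffT0}) identifies $\widetilde{\b u}=\dot{\b u}_0[V]$. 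Strong convergence in $\b H^1(\Omega)$ then follows by testing the difference equation against $\dot{\b u}_\epsilon[V]-\widetilde{\b u}$ and invoking the same coercivity.

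Given this, \eqref{e:StokesGenFctDiffVarParResVariationInLimit} is verified term by term. The $\alpha_\epsilon$-contributions vanish in the limit: the term $\int\alpha_\epsilon|\b u_\epsilon|^2\div V(0)\dx$ by the second limit in \eqref{e:StatNSSharpModelHelpLemmaConvRes}, and $\int\alpha_\epsilon\b u_\epsilon\cdot\dot{\b u}_\epsilon[V]\dx$ by Cauchy--Schwarz applied to the same quantity and the uniform bound on $\int\alpha_\epsilon|\dot{\b u}_\epsilon[V]|^2\dx$. The objective-functional terms converge by continuity of $\der f$ together with the growth bound \eqref{e:ObjFctLCaratheodory2} and the strong $\b H^1$-convergence of both $\b u_\epsilon$ and $\dot{\b u}_\epsilon[V]$. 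The genuine obstacle is the Ginzburg--Landau contribution, which must converge to $\gamma c_0\int_\Omega(\div V(0)-\nu\cdot\nabla V(0)\nu)\,\mathrm d\left|\der\chi_{E_0}\right|$. Since by Theorem~\ref{t:StatNSSharpConvergence} the total energies $J_\epsilon\to J_0$ and the other summands are already known to converge, the Ginzburg--Landau energies themselves converge to $\gamma c_0 P_\Omega(\{\varphi_0=1\})$; combined with the equipartition-of-energy property (the measures $\frac\epsilon2|\nabla\varphi_\epsilon|^2\dx$ and $\frac1\epsilon\psi(\varphi_\epsilon)\dx$ share the same weak-$\ast$ limit $\tfrac{c_0}{2}|\der\chi_{E_0}|$) and the convergence of the normal direction $\nabla\varphi_\epsilon/|\nabla\varphi_\epsilon|\to\nu$ tested against continuous matrix-valued fields, this yields the desired identity. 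This is essentially the calculation already carried out in \cite{GarckeHechtStokes} for the Stokes case and carries over unchanged, since only the phase-field part of the functional is involved.

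For the Lagrange multiplier convergence, the assumption $|\{\varphi_0=1\}|>0$ together with the volume constraint $\int_\Omega\varphi_0\dx\leq\beta|\Omega|<|\Omega|$ ensures $|\{\varphi_0=-1\}|>0$ as well, so $P_\Omega(E_0)>0$; by choosing $V\in\mathcal V_{ad}$ suitably aligned with $\nu$ on $\partial^\ast E_0$ one obtains $\int_\Omega\varphi_0\div V(0)\dx\neq 0$, and strong $L^1$-convergence then gives $\int_\Omega\varphi_\epsilon\div V(0)\dx\neq 0$ for $\epsilon\ll 1$. Dividing \eqref{e:StatNSDiffParVarConvOptCondLimitDist} by this nonzero factor and invoking \eqref{e:StokesGenFctDiffVarParResVariationInLimit} yields $\lambda_\epsilon\to\lambda_0$, and the strong $\b H^1$-convergence of $\dot{\b u}_\epsilon[V]$ has already been established. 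The main technical obstacle of the whole argument is the two linked measure-convergence steps: extracting the perimeter formula from the Ginzburg--Landau terms, and the uniform-in-$\epsilon$ coercive control of the linearised equation \eqref{e:StatNSGenDiffVarParDotUDeltaEpsilonEquation} required to bound $\dot{\b u}_\epsilon[V]$ independently of the porous-medium blow-up rate $\overline\alpha_\epsilon$.
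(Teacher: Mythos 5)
Your overall architecture (uniform bounds on $\dot{\b u}_\epsilon[V]$, limit passage in \eqref{e:StatNSGenDiffVarParDotUDeltaEpsilonEquation}--\eqref{e:StatNSDiffParlVarDivDotBUEpsilonDelta}, equipartition/Reshetnyak for the Ginzburg--Landau first variation, and the choice of a test velocity $V$ with $\int_\Omega\varphi_0\div V(0)\dx\neq0$ for the multipliers) is the route the paper itself defers to (the Stokes case in \cite{GarckeHechtStokes} and \cite[Section 17]{hecht}). However, there is a genuine gap at exactly the step you single out as the crucial one: the uniform-in-$\epsilon$ control of $\dot{\b u}_\epsilon[V]$, in particular of the weighted quantity $\int_\Omega\alpha_\epsilon(\varphi_\epsilon)\left|\dot{\b u}_\epsilon[V]\right|^2\dx$, which you need both to absorb the porous-medium terms in the energy estimate and to kill $\int_\Omega\alpha_\epsilon(\varphi_\epsilon)\b u_\epsilon\cdot\dot{\b u}_\epsilon[V]\dx$ in \eqref{e:StatNSGenFctDiffVarParResVariationEpsilon}. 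You propose to correct the inhomogeneous divergence \eqref{e:StatNSDiffParlVarDivDotBUEpsilonDelta} by a Bogovski\u\i-type field $\b w_\epsilon$ that is only controlled in $\b H^1(\Omega)$ by $\|\nabla\b u_\epsilon\|_{\b L^2(\Omega)}\|\der V(0)\|_{\b L^\infty(\Omega)}$. Testing \eqref{e:StatNSGenDiffVarParDotUDeltaEpsilonEquation} with $\dot{\b u}_\epsilon[V]-\b w_\epsilon$ then produces the cross terms $\int_\Omega\alpha_\epsilon(\varphi_\epsilon)\dot{\b u}_\epsilon[V]\cdot\b w_\epsilon\dx$ and $\int_\Omega\alpha_\epsilon(\varphi_\epsilon)\b u_\epsilon\cdot\der V(0)\b w_\epsilon\dx$, and a Bogovski\u\i solution has no smallness on $\{\varphi_0=-1\}$, so the only available bound is of the type $\overline\alpha_\epsilon^{1/2}\|\b w_\epsilon\|_{\b L^2(\Omega)}\bigl(\int_\Omega\alpha_\epsilon(\varphi_\epsilon)|\dot{\b u}_\epsilon[V]|^2\dx\bigr)^{1/2}$, which blows up with $\overline\alpha_\epsilon$ and cannot be absorbed by Young's inequality. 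The estimate therefore does not close, precisely because the porous-medium weight degenerates where a generic divergence-correction lives.

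The missing idea is that the correction must be the \emph{explicit} one coming from the Piola-type transform already used in the proof of Lemma~\ref{l:StatNSSharpParVarUTCIrcTTDiffT0}: write $\dot{\b u}_\epsilon[V]=\dot m_\epsilon-\bigl(\div V(0)\,\mathrm{Id}-\der V(0)\bigr)\b u_\epsilon$, where $\dot m_\epsilon=\partial_t|_{t=0}\bigl(\det\der T_t\,\der T_t^{-1}\,\b u_\epsilon(t)\circ T_t\bigr)\in\b V$ is solenoidal with zero trace and hence admissible in \eqref{e:StatNSGenDiffVarParDotUDeltaEpsilonEquation}. The point is that this correction is proportional to $\b u_\epsilon$ itself, so its weighted norm satisfies $\int_\Omega\alpha_\epsilon(\varphi_\epsilon)\left|\bigl(\div V(0)\,\mathrm{Id}-\der V(0)\bigr)\b u_\epsilon\right|^2\dx\leq C\int_\Omega\alpha_\epsilon(\varphi_\epsilon)|\b u_\epsilon|^2\dx\to0$ by \eqref{e:StatNSSharpModelHelpLemmaConvRes}; with this all cross terms are controlled by quantities that vanish or stay bounded independently of $\overline\alpha_\epsilon$, and the energy estimate closes. (A second, minor, imprecision: the strict bound \eqref{e:StatNSOptCondSmallnessUEpsilon} alone gives a coercivity constant $\mu-K_\Omega\|\nabla\b u_\epsilon\|_{\b L^2(\Omega)}$ that could degenerate as $\epsilon\searrow0$; the uniform constant comes from the strong convergence $\b u_\epsilon\to\b u_0$ in $\b H^1(\Omega)$ together with $\|\nabla\b u_0\|_{\b L^2(\Omega)}\leq\frac{\mu}{2K_\Omega}$ from Lemma~\ref{l:PropMinimizerJ0N}, as in the proof of Corollary~\ref{c:StatNSUEpsilonUniqueIfEpsilonSmall}.) With these repairs your limit passage, the Ginzburg--Landau argument and the multiplier argument proceed as you describe and coincide with the strategy of the cited proofs.
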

\begin{remark}
The additional condition $\left|\left\{\varphi_0=1\right\}\right|>0$ is only necessary in order to obtain the convergence of the Lagrange multipliers. But as already discussed in \cite{GarckeHechtStokes}, this condition is not very restrictive.
\end{remark}

\begin{proof}
	We can apply the ideas of \cite[Theorem 5]{GarckeHechtStokes}. The nonlinearity can be included as in the proof of Lemma \ref{l:FBEpsilonSharpConv} and Lemma \ref{l:FBEpsilonSharpConvWithoutBound}. For details we refer to \cite[Section 17]{hecht}.
\end{proof}

\section{Concluding remarks}\label{s:Concluding}
Summarizing, we have shown that the phase field approach, which was proposed and discussed in Section~\ref{s:Phasefield}, approximates the sharp interface model $\eqref{e:StokesGenMinProblemFctlSharp}-\eqref{e:StatNSSharpConstraintsWeak}$ describing topology optimization problems in a stationary Navier-Stokes flow in a sharp interface setting, in the following sense: We know, that for any sequence of minimizers of the phase field problems, there exists a subsequence that converges to some limit element as the thickness of the interface tends to zero. If this sequence fulfills a certain convergence rate we find, that it actually converges in the strong $L^1(\Omega)\times\b H^1(\Omega)$ topology and that the limit element is a minimizer of the sharp interface model. Moreover, we can show in this setting that certain optimality conditions of the phase field model approximate an optimality system of the sharp interface model. As we have proven that those optimality conditions of the sharp interface are, under suitable assumptions, equivalent to classical shape derivatives, this gives that the optimality conditions of the phase field model are for small $\epsilon>0$ also an approximation of shape derivatives. This implies, that the phase field formulation is a good approximation for the shape topology optimization problem in a sharp interface setting and is consistent with existing models.\\

One can also include a pressure depending term in the objective functional, hence minimize
$$\int_\Omega f\left(x,\b u,\der\b u,p\right)\dx$$
if one includes the restriction that there is fluid on the parts of the domain where the pressure $p$ is taken into account. This is discussed in more detail in \cite[Section 6]{GarckeHechtStokes} for the Stokes equations but can also be applied directly to the stationary Navier-Stokes equations, see also \cite[Section 22]{hecht}.

\bibliographystyle{plain}
\bibliography{literature}

\end{document}